\keywords{Lie groups, conjugacy classes, element of finite order, Burnside's lemma}
\title{Counting Conjugacy Classes of Elements of Finite Order in Exceptional Lie Groups}
\author[1]{Tamar Friedmann}
\author[2]{Qidong He}
\affil[1]{%
Department of Mathematics, Colby College, Waterville, ME, U.S.A.

\email{tfriedma@colby.edu}%
}
\affil[2]{%
Department of Mathematics, Rutgers University, Piscataway, NJ, U.S.A.

\email{qh97@math.rutgers.edu}%
}
\newcommand{\Z}{\mathbb Z}
\newcommand{\C}{\mathbb C}
\newcommand{\R}{\mathbb R}
\newcommand{\e}{\mathfrak e}
\newcommand{\f}{\mathfrak f}
\newcommand{\g}{\mathfrak g}
\newcommand{\h}{\mathfrak h}
\newcommand{\Mcal}{\mathcal M}
\newcommand{\Scal}{\mathcal S}
\newcommand{\Cl}{\operatorname{Cl}}
\newcommand{\diag}{\operatorname{diag}}
\newcommand{\Fix}{\operatorname{Fix}}
\newcommand{\M}{\operatorname{M}}
\newcommand{\row}{\operatorname{row}}
\newcommand{\SNF}{\operatorname{SNF}}
\newcommand{\SU}{\operatorname{SU}}
\newcolumntype{Y}{>{\centering\arraybackslash}X}
\newcolumntype{P}[1]{>{\centering\arraybackslash}p{#1}}
\begin{document}

\maketitle


\begin{abstract}
This paper continues the study of two numbers that are associated with Lie groups. 
The first number is $N(G,m)$, the number of conjugacy classes of elements in $G$ whose order divides $m$. 
The second number is $N(G,m,s)$, the number of conjugacy classes of elements in $G$ whose order divides $m$ and which have $s$ distinct eigenvalues, where we view $G$ as a matrix group in its smallest-degree faithful representation. 
We describe systematic algorithms for computing both numbers for $G$ a connected and simply-connected exceptional Lie group. 
We also provide explicit results for all of $N(G,m)$, $N(G_2,m,s)$, and $N(F_4,m,s)$. 
The numbers $N(G,m,s)$ were previously known only for the classical Lie groups; our results for $N(G,m)$ agree with those already in the literature but are obtained differently. 
\end{abstract}



\section{Introduction}

Let $G$ be a complex, simply-connected Lie group, viewed as a matrix group via its standard representation (i.e., its smallest-degree, faithful representation), and let $m$ and $s$ be positive integers. We study  the number of conjugacy classes of elements in $G$ whose order divides $m$, as well as  the number of such classes whose elements have $s$ distinct eigenvalues. That is, we define
\begin{align*} E(G,m)&=\{ x\in G \; |\;  x^m=1\}, \\
\label{egms1} E(G,m,s)&=  \{ x\in E(G,m)\;  | \; x \mbox{ has } s \mbox{ distinct eigenvalues} \} ,
\end{align*}
and study 
\begin{align*} N(G,m)&= \mbox{number of conjugacy classes of } G \mbox{ in } E(G,m), \\
N(G,m,s)&= \mbox{number of conjugacy classes of } G \mbox{ in } E(G,m,s).
\end{align*}

The study of the number of conjugacy classes of elements of finite order in Lie groups has an interesting history that combines mathematical and physical approaches and applications. 

The story begins in the 1980's with a pair of papers by Djokovic \cite{Djo80, Djo85}, where nice formulas for $N(G,m)$ were obtained for any connected semisimple complex Lie group $G$ that is simply connected or adjoint, using a generating function approach. In \cite{pianzola1, pianzola2}, the case of certain prime power orders was computed; and in \cite{lossers}, $N(\SU(n),m)$ was obtained.
This topic was revived in \cite{Fried13m, Fried13p}, where $N(G,m)$ was obtained for unitary, orthogonal, and symplectic Lie groups using simple combinatorial methods that apply to groups that are not necessarily connected, simply connected, or adjoint. Also in \cite{Fried13m, Fried13p}, $E(G,m,s)$ and $N(G,m,s)$ were introduced for the first time, motivated by an explicitly enumerative problem in string theory.\footnote{In \cite{Fried13m, Fried13p}, the number $s$ denotes the number of distinct {\it pairs} of eigenvalues.} Formulas for $N(G,m,s)$ were obtained there for the unitary, orthogonal, and symplectic Lie groups.\footnote{Other aspects of elements of finite order in Lie groups have been
studied. See for example \cite{other3, other1, other2, other4, other5}.}

In this paper, we study the numbers  $N(G,m)$ and $N(G,m,s)$ for $G$ an exceptional  Lie group. For $N(G,m)$, our results coincide with those in \cite{Djo85} (with the exception of a misprint that we discovered therein; see Remark \ref{rem:misprint}) and are obtained in a different way. For $N(G,m,s)$, our results are new. 

In Section \ref{Worbits} we recall a few necessary ingredients: the Chevalley basis, Chevalley group, and the Weyl group; we introduce notation, and present our main counting formula we rely on throughout the paper (Theorem \ref{thm:burnsidereformulated}). Said formula is a modification of Burnside's Lemma. In Section \ref{ngm} we compute $N(G,m)$, first for $G_2$ (Section \ref{subsec:ng2m}) and then for all the other four exceptional groups (Section \ref{subsec:ngm}).
Section \ref{ngms} is devoted to $N(G,m,s)$. In Section 
\ref{subsec:ng2ms} we present our methods using $G=G_2$ as an example and obtain our counting formula for this case (Corollary \ref{cor:ng2ms}). Among other things, our methods involve imposing and exploiting a partial order on a certain set of submatrices.  In Section \ref{subsec:ngms}  we generalize to the other four exceptional groups. We provide a formula (Corollary \ref{cor:ngms}) 
that applies to all of the exceptional groups, and obtain explicit results for the smallest two, $G=G_2, F_4$. 
In an appendix, we include three pseudo-codes which summarize our methods.

\section{$W$-orbits and conjugacy classes}\label{Worbits}

Let $\g$ be a simple Lie algebra of rank $\ell$ over $\C$, let $\h$ be a Cartan subalgebra of $\g$, let $\Phi$ be its root system, and let $\Pi$ be a fundamental system of the roots. 
Then, $\g$ has a basis $\{ h_r, r\in \Pi \; ; e_r, r\in \Phi \}$, called a Chevalley basis, with respect to which the multiplication constants of $\g$ are all integers, and particularly $[e_r, e_{-r}]=h_r$; the set $\{ h_r, r\in\Pi\}$ forms a basis for $\h$. For $\zeta \in \C$, let 
\begin{equation}\label{xr} x_r(\zeta)=\exp (\zeta e_r ),\end{equation} 
where $e_r$ is viewed in the smallest faithful representation of $\g$. Then the Chevalley group $G=\left \langle x_r(\zeta)\mid  r\in \Phi, \zeta \in \C \right \rangle$ is a complex simply-connected Lie group  in its smallest faithful representation. Now, define
\begin{equation}\label{nrzetahrzeta} n_r(\zeta)\coloneqq x_r(\zeta)x_{-r}(-{\zeta}^{-1})x_r(\zeta)\hskip 1cm \mbox{and} \hskip 1cm h_r(\zeta)\coloneqq n_r(\zeta)n_r(-1) . \end{equation}
Let $T$ be a maximal torus of $G$ defined as a maximal connected diagonalizable subgroup of $G$.
Then, we have that
\begin{enumerate}
\item The elements $h_r(\zeta)$ of $G$ generate a subgroup isomorphic to $T$. 
\item The elements 
\begin{equation}\label{nr}n_r\coloneqq n_r(1)=x_r(1) x_{-r}(-1)x_r(1)\end{equation}
operate on $T$ by conjugation in the same way as the reflections of the Weyl group $W$ act on $T$. 
\end{enumerate}
See \cite{Car72, Ste67} for details.

In \cite{How01}, the $x_{\pm r}(\zeta)$ are provided in the smallest-degree faithful representation for each of the five exceptional Lie algebras $\g_2, \f_4, \e_6, \e_7, \e_8$. 
 The representations given there allow us to obtain explicitly the generators $n_r$ of $W$, the basis of the Cartan subalgebra $\h$ via $h_r=[e_r, e_{-r}]$, and the torus $T = \exp \h$ (the exponential map is surjective on tori). The torus is then
\begin{equation} T=\left\{\prod_{r\in\Pi}\exp(2\pi ik_r[e_r , e_{-r}])\mid k_r\in\R\right\}= \left \{ \prod_{r\in\Pi} h_r(e^{2\pi i k_r}) \mid k_r\in \R \right \},
\end{equation}
where $e_r$ are taken to be in the representation given in \cite{How01}. 
Let $(\delta_1,\dots,\delta_\ell)$ be the fundamental roots of $\g$, so that 
\begin{equation}\label{generaltypicaltorus}t(\vec{k})=t(k_{\delta_1},\dots,k_{\delta_\ell})=\Pi _{i=1}^\ell \exp(2\pi ik_{\delta_i}[e_{\delta_i}, e_{-\delta_i}])= \Pi _{i=1}^\ell h_{\delta _i}(e^{2\pi i k_{\delta_i}})
\end{equation}
denotes a typical element of $T$. Then, $t(\vec{k})$ has order dividing $m$ iff $\vec{k}\in \left (\frac{1}{m}\Z \right )^\ell$. Since $G$ is simply connected, two $\ell$-tuples 
$\vec{k},\vec{k}'\in \left (\frac{1}{m}\Z \right )^\ell$ represent the same element of the maximal torus iff $\vec{k}-\vec{k}'\in\Z^\ell$, so we will henceforth restrict ourselves to  $\vec{k}\in\left(\frac{1}{m}\Z/\Z\right)^\ell$.

Let $\Cl(W)$ be the collection of conjugacy classes in $W$, and let $w_c $ be a representative of a conjugacy class $c\in \Cl(W)$. Also, let $\left\lvert\Fix(w_c)\right\rvert$ be the number of fixed points of the set 
\[ T(G,m):=\left\{t(\vec{k}):\vec{k}\in\left(\frac{1}{m}\Z/\Z\right)^\ell \right \} \] 
under the action of $w_c$ and let $\left\lvert\Fix_s(w_c)\right\rvert$ be the number of fixed points of the set 
\[ T_s(G,m):= \left\{t(\vec{k}):\vec{k}\in\left(\frac{1}{m}\Z/\Z\right)^\ell,\ t(\vec{k})\text{ has $s$ distinct eigenvalues}\right\}\] under the same action. 

\begin{theorem} \label{thm:burnsidereformulated}
We have 
\begin{align} \label{ngmformula}
N(G,m)=N(T,m)&=\frac{1}{\lvert W\rvert}\sum_{c\in\Cl(W)}\left\lvert c\right\rvert\left\lvert\Fix(w_c)\right\rvert 
\end{align}
and
\begin{align}\label{ngmsformula}
N(G,m,s)=N(T,m,s)&=\frac{1}{\lvert W\rvert}\sum_{c\in\Cl(W)}\left\lvert c\right\rvert\left\lvert\Fix_s(w_c)\right\rvert .
\end{align}

\end{theorem}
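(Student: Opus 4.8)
The plan is to reduce each of the two counts to a count of $W$-orbits on a finite set, and then to apply the Cauchy--Frobenius (Burnside) lemma in its class-sum form. The first task is to establish $N(G,m)=N(T,m)$, where $N(T,m)$ is to be read as the number of $W$-orbits on $T(G,m)$ (equivalently, the number of $G$-conjugacy classes that meet $T(G,m)$). Two standard structural facts accomplish this, both available from \cite{Car72,Ste67}: (i) every element of $G$ of finite order is semisimple, hence lies in a maximal torus, and since all maximal tori of the connected group $G$ are conjugate, every conjugacy class in $E(G,m)$ meets $T$; and (ii) two elements $t,t'$ of $T$ are $G$-conjugate if and only if they lie in a single $W$-orbit. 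For the nontrivial direction of (ii), if $t'=gtg^{-1}$ then $T$ and $gTg^{-1}$ are maximal tori of $Z_G(t')$, so both lie in $Z_G(t')^{\circ}$ and are conjugate there by some $z$; then $n=z^{-1}g$ lies in $N_G(T)$ and satisfies $ntn^{-1}=t'$, and by the setup of Section~\ref{Worbits} the action of $N_G(T)/T$ on $T$ is exactly that of $W$. Since $W$ acts on $T$ by group automorphisms it preserves orders of elements, so $T(G,m)$ is $W$-stable; combining this with (i) and (ii), $N(G,m)$ equals the number of $W$-orbits on $T(G,m)$.

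Next I would apply Cauchy--Frobenius to the action of the finite group $W$ on the finite set $T(G,m)$: the number of orbits is $\frac{1}{|W|}\sum_{w\in W}|\Fix(w)|$. Because conjugate permutations have equal fixed-point counts --- if $w'=uwu^{-1}$ then $x\mapsto u\cdot x$ restricts to a bijection $\Fix(w)\to\Fix(w')$ --- the integer $|\Fix(w)|$ depends only on the class $c\in\Cl(W)$ of $w$, so $\sum_{w\in W}|\Fix(w)|=\sum_{c\in\Cl(W)}|c|\,|\Fix(w_c)|$, which proves \eqref{ngmformula}. For \eqref{ngmsformula}, I would note that the number of distinct eigenvalues of a matrix is a conjugation invariant, hence constant on $G$-conjugacy classes and in particular $W$-invariant on $T$; therefore $T_s(G,m)$ is a union of $W$-orbits, so the $G$-conjugacy classes in $E(G,m,s)$ are precisely the $W$-orbits on $T_s(G,m)$, and repeating the Cauchy--Frobenius computation with $T_s(G,m)$ in place of $T(G,m)$ yields \eqref{ngmsformula}.

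I expect the one genuinely delicate point to be (ii): that $G$-conjugacy of elements of $T$ is already witnessed inside $N_G(T)$, equivalently by $W$. This is exactly where connectedness of $G$ is used --- through the conjugacy of the maximal tori $T$ and $gTg^{-1}$ inside $Z_G(t')^{\circ}$ --- and it is the only step that requires more than elementary bookkeeping or the bare statement of Burnside's lemma.
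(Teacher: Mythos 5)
Your proposal is correct and follows essentially the same route as the paper: reduce to the torus via semisimplicity of finite-order elements, identify $G$-conjugacy on $T$ with the $W$-action, apply Burnside's lemma, and collapse the sum over $W$ to a sum over $\Cl(W)$ using the fixed-point bijection for conjugate elements. The only difference is that you supply an explicit centralizer argument for the key fact that $G$-conjugacy of elements of $T$ is witnessed by $W$ (modulo a harmless $z$ versus $z^{-1}$ bookkeeping slip), whereas the paper simply cites this to \cite{Djo80}, \cite[Chap.~11]{Hall04}, and \cite[Chap.~4]{Knapp96}.
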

\begin{proof}

We begin with the first equalities of Equations \eqref{ngmformula} and \eqref{ngmsformula}. 
Let $x\in G$ be an element of finite order $m$. 
Let $x=su$ be its multiplicative Jordan decomposition, with $s$ semisimple and $u$ unipotent, and $su=us$. Then $x^m=s^mu^m$ is the multiplicative Jordan decomposition of $x^m=1$. It follows that $u^m=1$ which implies $u=1$ so that $x$ is semisimple. So $x$ lies in a maximal torus of $G$ and is therefore conjugate to an element of $T$.
Furthermore, the order of any element in $G$ is invariant under conjugation. So we have $N(G,m)=N(T,m)$. Since the eigenvalues of any element in $G$ are also invariant under conjugation, we have $N(G,m,s)=N(T,m,s)$.
We have reduced the counting of conjugacy classes in $G$ to that of conjugacy classes in $T$.  

We now prove the second equalities of Equations \eqref{ngmformula} and \eqref{ngmsformula}. As noted also in \cite{Djo80}, two elements $t,s\in T$ are conjugate in $G$ iff there exists $w\in W$ satisfying $s=w\cdot t$, where $W$ acts on $T$ by conjugation (see for example \cite[Chap. 11]{Hall04}, \cite[Chap. 4]{Knapp96}). Hence, $N(T,m)$ is the number of orbits of elements in $T(G,m)$ under the action of $W$. By Burnside's Lemma, 
 \[ N(T,m)=\frac{1}{\lvert W\rvert}\sum_{w\in W}\left\lvert\Fix(w)\right\rvert,\] where $\Fix(w)=\{t\in T(G,m):w\cdot t=t \}$ is the set of fixed points of $T(G,m)$ under the action of $g$. Now, if $u,v \in W$ are conjugate, there is a canonical bijection between the fixed points of $T(G,m)$ under $u$ and under $v$, so $\lvert \Fix(u)\rvert =\lvert\Fix(v)\rvert$. Therefore, we can simplify the above sum over the Weyl group to the sum over its conjugacy classes given in the theorem. An analogous argument holds for $N(T,m,s)$. 
\end{proof}
The utility of Theorem \ref{thm:burnsidereformulated} stems from the fact that while the Weyl group of an exceptional Lie group may be forbiddingly large, the number of conjugacy classes inside the Weyl group is generally small. Hence, provided that we know the size and a representative of each conjugacy class in the Weyl group, the sums in Theorem \ref{thm:burnsidereformulated} will be much easier to evaluate than the ones given by Burnside's Lemma. Fortunately, this information has been completely determined in \cite{Car72} and translated into an accessible form in GAP 3 \cite{gap3, chevie}.

\begin{remark} Theorem  \ref{thm:burnsidereformulated} as well as the rest of the methods described in this paper apply to the classical Lie groups as well, and may be used to reproduce the results of \cite{Fried13m, Fried13p} for the simply-connected cases there. 
\end{remark}

\section{$N(G,m)$}\label{ngm}
In order to obtain $N(G,m)$, we need to compute $\lvert \Fix(w_c)\rvert$ for $c\in \Cl(W)$ and then use Theorem \ref{thm:burnsidereformulated}. We carry out the computation for $G=G_2$ in Section \ref{subsec:ng2m} and then generalize to any exceptional Lie group in Section \ref{subsec:ngm}. 
\subsection{Computation of $N(G_2,m)$.} \label{subsec:ng2m}
Let the two fundamental roots of $G_2$ be denoted $\delta _1$ and $\delta _2$. Then, \cite{How01} provides the following $7\times 7$ matrices
\begin{align*}
 \psi (e_{\delta_1})=& E_{12}+2E_{34}+E_{45}+E_{67} ,\\ 
\psi (e_{-\delta_1})=& E_{21}+E_{43}+2E_{54}+E_{76} ,\\
\psi (e_{\delta_2})=& E_{23}+E_{56},\\
\psi (e_{-\delta_2})=& E_{32}+E_{65},
\end{align*}
where $E_{ij}$ is the matrix with the entry 1 in the $ij$th position and 0 elsewhere. 
From these we obtain the following basis elements for the Cartan subalgebra
\begin{align*}
h_{e_{\delta_1}}&=[\psi (e_{\delta_1}),\psi (e_{-\delta_1})]=\diag(1,-1,2,0,-2,1,-1) ,\\
h_{e_{\delta_2}}&=[\psi (e_{\delta_2}),\psi (e_{-\delta_2})]=\diag(0,1,-1,0,1,-1,0), \\
\end{align*}
which give, via Equation \eqref{generaltypicaltorus},
\footnotesize
\begin{equation}\label{typicaltorusg2}
\small
t(k_{\delta_1},k_{\delta_2})=\diag(e^{2\pi i k_{\delta_1}},e^{-2\pi i(k_{\delta_1}-k_{\delta_2})},e^{2\pi i(2k_{\delta_1}-k_{\delta_2})},1,e^{-2\pi i(2k_{\delta_1}-k_{\delta_2})}, e^{2\pi i(k_{\delta_1}-k_{\delta_2})},e^{-2\pi ik_{\delta_1}})
\end{equation}
\normalsize
for a typical element of the torus. Furthermore, using Equations \eqref{xr} and \eqref{nr}, we obtain

\begin{equation} \label{ndeltag2}
n_{\delta_1}=\begin{bmatrix}&1&&&&&\\-1&&&&&&\\&&&&1&&\\&&&-1&&&\\&&1&&&&\\&&&&&&1\\&&&&&-1&\end{bmatrix}, \; \;
n_{\delta_2}=\begin{bmatrix}1&&&&&&\\&&1&&&&\\&-1&&&&&\\&&&1&&&\\&&&&&1&\\&&&&-1&&\\&&&&&&1\end{bmatrix}
\end{equation}
for the generators of $W$. The size and a representative of each conjugacy class in $W$ are shown in Table \ref{tab:weylgroupg2}; as we have mentioned earlier, they are based on the results 
of \cite{Car72} and can be accessed in GAP 3 \cite{gap3, chevie}.
\begin{table}
\centering
\begin{tabular}{|c|c|}
\hline
$\lvert c\rvert$ & Representative $w_c\in c$ \\ \hline
$1$ & $I$ \\ \hline
$3$ & $n_{\delta_1}$ \\ \hline
$3$ & $n_{\delta_2}$ \\ \hline
$2$ & $n_{\delta_2}n_{\delta_1}$ \\ \hline
$2$ & $\left(n_{\delta_2}n_{\delta_1}\right)^2$ \\ \hline
$1$ & $\left(n_{\delta_2}n_{\delta_1}\right)^3$ \\ \hline
\end{tabular}
\caption{The size and a representative of each conjugacy class $c$ in the Weyl group of $G_2$.}
\label{tab:weylgroupg2}
\end{table}

As can be seen in \eqref{ndeltag2}, the elements of the Weyl group are signed permutation matrices. They act on $t(k_{\delta_1},k_{\delta_2})\in T$ by conjugation, resulting in a signed permutation of the exponents of the first three diagonal entries of $t(k_{\delta_1},k_{\delta_2})$, with the last three entries permuted accordingly. Since the exponents of the first three diagonal entries satisfy the relation
$$ 2k_{\delta_1}-k_{\delta_2}=k_{\delta_1}-[-(k_{\delta_1}-k_{\delta_2})],$$
it follows that $t(k_{\delta_1},k_{\delta_2})$ is fixed by an element $w\in W$ iff $w\cdot t(k_{\delta_1},k_{\delta_2})$ and $t(k_{\delta_1},k_{\delta_2})$ agree in their first two diagonal entries. 

It remains to determine $\lvert\Fix(w_c)\rvert$ for each conjugacy class representative $w_c$.

\begin{example}[Computing $\lvert\Fix(\left(n_{\delta_2}n_{\delta_1}\right)^2\rvert$] \label{ex:g2fixedpoint}
Let 
\[w_c=\left(n_{\delta_2}n_{\delta_1}\right)^2=\begin{bmatrix}&&&&1&&\\&&&&&&1\\&1&&&&&\\&&&1&&&\\&&&&&-1&\\-1&&&&&&\\&&1&&&&\end{bmatrix} \] 
and let $t(k_{\delta_1},k_{\delta_2})$ be as given in \eqref{typicaltorusg2}. The action of $w_c$ on $t(k_{\delta_1},k_{\delta_2})$ is given by

\begin{equation*}
\begin{split}
w_ct(&k_{\delta_1},k_{\delta_2})w_c^{-1} \\
&=\diag(e^{-2\pi i(2k_{\delta_1}-k_{\delta_2})}, e^{-2\pi ik_{\delta_1}}, e^{-2\pi i(k_{\delta_1}-k_{\delta_2})}, 1,e^{2\pi i(k_{\delta_1}-k_{\delta_2})},e^{2\pi ik_{\delta_1}}, e^{2\pi i(2k_{\delta_1}-k_{\delta_2})}).
\end{split}
\end{equation*}
Directing our attention to the first two diagonal entries, we conclude that $\lvert\Fix(w_c)\rvert$ is the number of solutions to the equation
\begin{equation}\label{coefficient:eg}\begin{bmatrix} 3 & -1 \\ 0 & 1 \end{bmatrix}\begin{bmatrix} k_{\delta_1} \\ k_{\delta_2} \end{bmatrix}=\vec{0}.\end{equation}
over $\frac{1}{m}\Z/\Z$. To count the solutions, we use the Smith normal form of an integer matrix.

\begin{theorem}[Smith Normal Form]\label{thm:snf}
If $A\in\M_{n\times \ell}(\Z)$, then there exist unimodular matrices $P\in\M_n(\Z)$ and $Q\in\M_\ell (\Z)$ such that 
\begin{equation}\label{matrix:snf}PAQ=\begin{bmatrix}d_1&&&&\\&\ddots&&&\\&&d_r&&\\&&&0&\\&&&&\ddots\end{bmatrix}=:\SNF(A),\end{equation}
where each $d_i\ne 0$, $d_i\mid d_j$ whenever $i\le j$, and the $d_i$'s are unique up to a sign. The matrix $\SNF(A)$ is called the \emph{Smith normal form} of $A$; the integers $d_1,\dots,d_r$ are called the \emph{elementary divisors} of $A$.
\end{theorem}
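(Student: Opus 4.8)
The plan is to establish existence by an explicit reduction over $\Z$ and uniqueness by the method of determinantal divisors; the statement is classical, so the aim would be a clean self-contained argument rather than anything novel.

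\textbf{Existence.} I would induct on $\min(n,\ell)$, the base case $A=0$ being trivial. Assuming $A\neq 0$, among all matrices of the form $PAQ$ with $P\in\M_n(\Z)$ and $Q\in\M_\ell(\Z)$ unimodular, I choose one, say $B=(b_{ij})$, whose smallest nonzero entry in absolute value is as small as possible; composing with permutation matrices (which are unimodular) I may assume this entry lies in position $(1,1)$, and set $d_1=b_{11}$. The first key claim is that $d_1$ divides every entry of the first row and first column of $B$: if $d_1\nmid b_{1j}$, write $b_{1j}=qd_1+\rho$ with $0<\rho<|d_1|$ and subtract $q$ times column $1$ from column $j$; the resulting matrix has a nonzero entry of absolute value $\rho<|d_1|$, contradicting minimality, and the first-column case is symmetric. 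With this, integer row and column operations clear the first row and column of $B$, leaving a block-diagonal matrix with blocks $(d_1)$ and some $A'\in\M_{(n-1)\times(\ell-1)}(\Z)$. The second key claim is that $d_1$ divides \emph{every} entry of $A'$ — this is what will produce the divisibility chain: if $d_1\nmid a'_{ij}$, add the row of the large matrix containing $a'_{ij}$ to its first row and rerun the division argument. Applying the inductive hypothesis to $A'$ gives $d_2\mid d_3\mid\cdots\mid d_r$, and since $d_1$ divides all entries of $A'$ it divides $d_2$, completing the chain $d_1\mid d_2\mid\cdots\mid d_r$.

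\textbf{Uniqueness.} For $0\le k\le\min(n,\ell)$, let $\Delta_k(A)$ be the greatest common divisor of all $k\times k$ minors of $A$, with $\Delta_0(A)=1$. By the generalized Cauchy--Binet formula, each $k\times k$ minor of $PAQ$ is a $\Z$-linear combination of $k\times k$ minors of $A$, so $\Delta_k(A)\mid\Delta_k(PAQ)$; running the same argument on $A=P^{-1}(PAQ)Q^{-1}$ gives equality up to sign, so the $\Delta_k$ are invariants of the unimodular equivalence class. Evaluating them on the diagonal form in \eqref{matrix:snf} gives $\Delta_k=d_1\cdots d_k$ for $k\le r$ and $\Delta_k=0$ for $k>r$. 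Hence $r$ is the largest $k$ with $\Delta_k\neq 0$, so $r$ is intrinsic, and $d_k=\Delta_k/\Delta_{k-1}$, which determines each $d_k$ up to sign.

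\textbf{Main obstacle.} I expect the delicate part to be the two minimality arguments in the existence proof — especially the second, which forces $d_1$ to divide every entry of the lower-right block and is precisely what makes the divisibility condition $d_i\mid d_j$ for $i\le j$ come out — together with the bookkeeping that every operation used is realized by multiplication by a unimodular integer matrix, so that $P$ and $Q$ really are in $\M_n(\Z)$ and $\M_\ell(\Z)$. The Cauchy--Binet input for uniqueness is routine once that identity is granted.
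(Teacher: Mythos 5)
Your proof is correct, but there is nothing in the paper to compare it against: the authors state the Smith Normal Form as a classical background result (Theorem \ref{thm:snf}) and give no proof, using it only as a tool via Proposition \ref{prop:countkernel}. What you have written is the standard textbook argument and it is sound: the existence half via the Euclidean minimality trick (the minimum exists by well-ordering of the positive integers, since the nonzero entries of all matrices $PAQ$ form a nonempty set of positive integers in absolute value), including the second, more delicate claim that $d_1$ divides the entire lower-right block --- which, as you say, is exactly what produces the chain $d_1\mid d_2\mid\cdots\mid d_r$ --- and the uniqueness half via the determinantal divisors $\Delta_k$ and Cauchy--Binet. The only bookkeeping worth making explicit is that every operation you use (permutation, adding an integer multiple of one row/column to another) is realized by left or right multiplication by a unimodular integer matrix, so the inductive step genuinely stays within the orbit $\{PAQ\}$; you flag this yourself, and it is routine. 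In short: correct, standard, and orthogonal to the paper, which treats the theorem as known.
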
 
Notice that the unimodularity of $P$ and $Q$ guarantees the existence of a bijection between $\ker A$ and $\ker\SNF(A)$ for all $m$ (recall that the kernels are considered over $\frac{1}{m}\Z/\Z$).

\begin{proposition} \label{prop:countkernel}
Let $A\in\M_{n\times \ell}(\Z)$. For any positive integer $m$, the size of the kernel of $A$ over $\frac{1}{m}\Z/\Z$ is given by \[\lvert\ker A\rvert=m^{\ell-r}\cdot\prod_{i=1}^r\gcd(d_i,m),\] where $d_1,\dots,d_r$ are the elementary divisors of $A$.
\end{proposition}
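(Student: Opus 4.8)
The plan is to reduce to the case where $A$ is diagonal by exploiting the Smith normal form, and then count the solutions one coordinate at a time.

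First I would invoke the bijection already noted after Theorem~\ref{thm:snf}. Write $PAQ=\SNF(A)$ with $P,Q$ unimodular, so that $P^{-1},Q^{-1}$ are again integer matrices and hence act on $\left(\frac1m\Z/\Z\right)^n$ and $\left(\frac1m\Z/\Z\right)^\ell$. If $v\in\ker A$ and $w=Q^{-1}v$, then $\SNF(A)w=PAQ\,Q^{-1}v=P(Av)=0$; conversely if $\SNF(A)w=0$ then $v=Qw$ satisfies $Av=P^{-1}\SNF(A)w=0$. So $v\mapsto Q^{-1}v$ restricts to a bijection $\ker A\to\ker\SNF(A)$, giving $\lvert\ker A\rvert=\lvert\ker\SNF(A)\rvert$. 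It therefore suffices to treat the diagonal $n\times\ell$ matrix $\SNF(A)=\diag(d_1,\dots,d_r,0,\dots,0)$.

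Second, because this matrix is diagonal, the homogeneous system $\SNF(A)w=0$ over $\frac1m\Z/\Z$ decouples into the independent scalar conditions $d_iw_i=0$ for $i=1,\dots,r$, while the remaining coordinates $w_{r+1},\dots,w_\ell$ are entirely unconstrained (and any extra zero rows present when $n>r$ impose nothing). Hence
\[
\lvert\ker\SNF(A)\rvert=m^{\ell-r}\prod_{i=1}^r N_i,\qquad N_i:=\left\lvert\left\{x\in\tfrac1m\Z/\Z : d_ix\in\Z\right\}\right\rvert .
\]
To evaluate $N_i$, identify $\frac1m\Z/\Z$ with $\Z/m\Z$ via $\tfrac{j}{m}\leftrightarrow j$; then $d_i\cdot\tfrac{j}{m}\in\Z$ is equivalent to $m\mid d_ij$, i.e.\ to $\tfrac{m}{\gcd(d_i,m)}\mid j$, and exactly $\gcd(d_i,m)$ of the $m$ residues $j$ satisfy this. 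Thus $N_i=\gcd(d_i,m)$, which is well defined since the $d_i$ are determined only up to sign, and substituting yields $\lvert\ker A\rvert=m^{\ell-r}\prod_{i=1}^r\gcd(d_i,m)$.

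The argument is essentially routine; the only point needing care is the bookkeeping around the rectangular shape of $\SNF(A)$ — confirming that it is the number of columns $\ell$, not the number of rows $n$, that governs the count $\ell-r$ of unconstrained coordinates, and that padding rows of zeros are harmless.
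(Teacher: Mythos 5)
Your proposal is correct and follows essentially the same route as the paper: pass to $\SNF(A)$ via the unimodular change of variables (the bijection the paper records immediately after Theorem~\ref{thm:snf}) and then count solutions of the decoupled scalar congruences, with $d_ix=0$ contributing $\gcd(d_i,m)$ solutions and each free coordinate contributing $m$. You simply spell out the bijection and the computation of $N_i$ in more detail than the paper does.
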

\begin{proof}
Consider the alternative equation $\SNF(A)\vec{k}=0$. The number of solutions to $0k_{\delta_i}=0$ is $m$ for $i=r+1, \ldots , \ell$; the number of solutions to $d_ik_{\delta_i}=0$ is $\gcd(d_i,m)$ for $i=1, \ldots , r$.
\end{proof}
\noindent The Smith normal form of the coefficient matrix in Equation \eqref{coefficient:eg} is \[\SNF\left(\begin{bmatrix} 3 & -1 \\ 0 & 1 \end{bmatrix}\right)=\begin{bmatrix} 1 & 0 \\ 0 & 3 \end{bmatrix}.\] It follows from Proposition \ref{prop:countkernel} that \[\lvert\Fix(\left(n_{\delta_2}n_{\delta_1}\right)^2)\rvert=\gcd(1,m)\gcd(3,m)=\begin{cases}3 & m\equiv 0\bmod 3\\1 & m\not\equiv 0\bmod 3\end{cases}.\]
\end{example}

\begin{table}
\centering
\begin{tabular}{|c|c|c|c|}
\hline
$w_c\in c$ & Associated System & SNF & $\lvert\Fix(w_c)\rvert$ \\ \hline
$I$ & $\begin{bmatrix}0 & 0 \\ 0 & 0\end{bmatrix}\vec{k}=\vec{0}$ & $\begin{bmatrix}0 & 0 \\ 0 & 0\end{bmatrix}$ & $m^2$ \\ \hline

$n_{\delta_1}(1)$ & $\begin{bmatrix}2 & -1 \\ 2 & -1\end{bmatrix}\vec{k}=\vec{0}$ & $\begin{bmatrix}1 & 0 \\ 0 & 0\end{bmatrix}$ & $m$ \\ \hline

$n_{\delta_2}(1)$ & $\begin{bmatrix}0 & 0 \\ 3 & -2\end{bmatrix}\vec{k}=\vec{0}$ & $\begin{bmatrix}1 & 0 \\ 0 & 0\end{bmatrix}$ & $m$ \\ \hline

$n_{\delta_2}(1)n_{\delta_1}(1)$ & $\begin{bmatrix}2 & -1 \\ 1 & 0\end{bmatrix}\vec{k}=\vec{0}$ & $\begin{bmatrix}1 & 0 \\ 0 & 1\end{bmatrix}$ & $1$ \\ \hline

$\left(n_{\delta_2}(1)n_{\delta_1}(1)\right)^2$ & $\begin{bmatrix}3 & -1 \\ 0 & 1\end{bmatrix}\vec{k}=\vec{0}$ & $\begin{bmatrix}1 & 0 \\ 0 & 3\end{bmatrix}$ & $\begin{cases}3 & m\equiv 0\bmod 3\\1 & m\not\equiv 0\bmod 3\end{cases}$ \\ \hline

$\left(n_{\delta_2}(1)n_{\delta_1}(1)\right)^3$ & $\begin{bmatrix}2 & 0 \\ 2 & -2\end{bmatrix}\vec{k}=\vec{0}$ & $\begin{bmatrix}2 & 0 \\ 0 & 2\end{bmatrix}$ & $\begin{cases}4 & m\equiv 0\bmod 2\\1 & m\not\equiv 0\bmod 2\end{cases}$ \\ \hline
\end{tabular}
\caption{The number of fixed points in $T(G_2,m)$ under the action of each conjugacy class $c\in\Cl(W)$ as represented by $w_c\in c$.}
\label{tab:fixedpointg2}
\end{table}

Repeating this computation for the other conjugacy classes in $W$ yields Table \ref{tab:fixedpointg2}. Finally, using Theorem \ref{thm:burnsidereformulated}, we obtain the following result.

\begin{theorem}
For any positive integer $m$, the number $N(G_2,m)$ is given by Table \ref{ng2m}.
\begin{table}
\centering
\def\arraystretch{1.6}
\begin{tabular}{|c|c|}
\hline
$m\bmod 6$ & $N(G_2,m)$ \\ \hline
$0$ & $(m^2+6m+12)/12$ \\ \hline
$\pm 1$ & $(m^2+6m+5)/12$ \\ \hline
$\pm 2$ & $(m^2+6m+8)/12$ \\ \hline
$3$ & $(m^2+6m+9)/12$ \\ \hline
\end{tabular}
\caption{Results for $N(G_2,m)$}
\label{ng2m}
\end{table}
\end{theorem}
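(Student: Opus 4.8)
The plan is to combine Theorem~\ref{thm:burnsidereformulated} with the fixed-point counts already assembled in Table~\ref{tab:fixedpointg2}. Writing $W$ for the Weyl group of $G_2$, we have $|W|=12$, and the six conjugacy classes with their sizes $|c|$ are listed in Table~\ref{tab:weylgroupg2}. So the only task is to evaluate
\[
N(G_2,m)=\frac{1}{12}\sum_{c\in\Cl(W)}|c|\,|\Fix(w_c)|
=\frac{1}{12}\Bigl(m^2+3m+3m+2\cdot 1+2\cdot|\Fix((n_{\delta_2}n_{\delta_1})^2)|+|\Fix((n_{\delta_2}n_{\delta_1})^3)|\Bigr),
\]
where I have already inserted the five entries that do not depend on $m$ modulo anything, namely $|\Fix(I)|=m^2$, $|\Fix(n_{\delta_1})|=|\Fix(n_{\delta_2})|=m$, and $|\Fix(n_{\delta_2}n_{\delta_1})|=1$. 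The two remaining terms are the genuinely case-dependent ones: from Table~\ref{tab:fixedpointg2}, $|\Fix((n_{\delta_2}n_{\delta_1})^2)|$ equals $3$ if $3\mid m$ and $1$ otherwise, while $|\Fix((n_{\delta_2}n_{\delta_1})^3)|$ equals $4$ if $2\mid m$ and $1$ otherwise.

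Next I would split into the four residue classes of $m$ modulo $6$ dictated by the two independent congruence conditions ($m$ even/odd and $m\equiv 0\bmod 3$ or not), which are exactly the rows of Table~\ref{ng2m}: $m\equiv 0$ means $2\mid m$ and $3\mid m$; $m\equiv \pm 1$ means $m$ odd and $3\nmid m$; $m\equiv \pm 2$ means $m$ even and $3\nmid m$; $m\equiv 3$ means $m$ odd and $3\mid m$. In each case I substitute the appropriate values of the two case-dependent fixed-point counts into the boxed sum above. For instance, when $m\equiv 0\bmod 6$ the sum becomes $\tfrac{1}{12}(m^2+6m+2+2\cdot 3+4)=\tfrac{1}{12}(m^2+6m+12)$; when $m\equiv\pm1$ it is $\tfrac{1}{12}(m^2+6m+2+2+1)=\tfrac{1}{12}(m^2+6m+5)$; when $m\equiv\pm2$ it is $\tfrac{1}{12}(m^2+6m+2+2+4)=\tfrac{1}{12}(m^2+6m+8)$; and when $m\equiv3$ it is $\tfrac{1}{12}(m^2+6m+2+6+1)=\tfrac{1}{12}(m^2+6m+9)$. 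These are precisely the four entries of Table~\ref{ng2m}.

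Finally, I would note the sanity check that each of these rational expressions is in fact a non-negative integer for the relevant values of $m$ — i.e.\ that $m^2+6m+12\equiv 0\bmod{12}$ when $6\mid m$, and similarly for the other three rows — which follows by reducing $m^2+6m$ modulo $12$ in each residue class and confirming it cancels the constant term. This is a one-line arithmetic verification and serves only as a consistency check, not as part of the logical chain. There is no real obstacle here: the entire content of the theorem is packaged into Table~\ref{tab:fixedpointg2} and Table~\ref{tab:weylgroupg2}, and the proof is just the bookkeeping of Burnside's formula across four cases. The only place to be careful is making sure the two congruence conditions are combined correctly into the modulo-$6$ classes and that $\pm1$ and $\pm2$ are genuinely handled by a single computation each (they are, since the fixed-point counts depend only on $m\bmod 2$ and $m\bmod 3$, hence only on $|m|\bmod 6$).
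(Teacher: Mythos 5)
Your proposal is correct and is exactly the paper's argument: the paper likewise obtains the theorem by feeding the class sizes of Table~\ref{tab:weylgroupg2} and the fixed-point counts of Table~\ref{tab:fixedpointg2} into Theorem~\ref{thm:burnsidereformulated} and splitting into the residue classes of $m$ modulo $6$. Your four case evaluations all check out, so there is nothing to add.
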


\subsection{Computation of $N(G,m)$ for all exceptional Lie groups.}\label{subsec:ngm}
We compute $N(G,m)$ using the same method presented in Section \ref{subsec:ng2m}. Using the representations provided in \cite{How01}, we find that, in a suitable basis, the typical element of the torus, i.e., $t(\vec{k})$ of Equation \eqref{generaltypicaltorus}, takes the form (we include the torus of $G_2$ here again for reference)

\begin{enumerate}

 \newcommand\bigzero{\makebox(0,0){\text{\huge0}}}
\item 
$\displaystyle \diag(\exp(2\pi iP_1),\dots,\exp(2\pi iP_u),1,\exp(-2\pi iP_{u}),\dots,\exp(-2\pi iP_{1})),$

where $u=3$, if $G=G_2$;

\item 
$\displaystyle \diag(\exp(2\pi iP_1),\dots,\exp(2\pi iP_u),1,\exp(-2\pi iP_{u}),\dots,\exp(-2\pi iP_{1})),$

where $u=12$, if $G=F_4$;

\item 
$\displaystyle \diag(\exp(2\pi iP_1),\dots,\exp(2\pi iP_{u})),$

where $u=27$, if $G=E_6$;
\item 
$\displaystyle \diag(\exp(2\pi iP_1),\dots,\exp(2\pi iP_{u}),\exp(-2\pi iP_{u}),\dots,\exp(-2\pi iP_{1})),$

where $u=28$, if $G=E_7$; or 
\item 
$\displaystyle \diag(\exp(2\pi iP_1),\dots,\exp(2\pi iP_{u}),\underbrace{1,\dots,1}_8,\exp(-2\pi iP_{u}),\dots,\exp(-2\pi iP_{1})),$

where $u=120$, if $G=E_8$. 
\end{enumerate}
Here, for each $G$, the $P_1,\dots,P_u$ are distinct, nonzero, integer combinations of $k_{\delta_1},\dots,k_{\delta_\ell}$. For example, for $G_2$, we see from Equation \eqref{typicaltorusg2} that 
\begin{align}\label{p1p2p3}
P_1&=k_{\delta_1} ,\nonumber \\ P_2&=-(k_{\delta_1}-k_{\delta_2}),\\ P_3&= 2k_{\delta_1}-k_{\delta_2}.\nonumber
\end{align}
As with $G=G_2$, we use the results of \cite{Car72}.

For $G_2$, we saw that fixing two of the three diagonal entries of an element in $t(\vec{k})$ was sufficient to fix the element. It turns out that an analogue of this statement still applies in the general case.

\begin{proposition} \label{prop:wscorrespondence}
Let $w\in W$, where $W$ is the Weyl group of $G_2$, $F_4$, $E_6$, $E_7$, or $E_8$. Acting on $t(\vec{k})$ by $w$ results in a signed permutation of the exponents in its first $u$ diagonal entries. Further, there exist $\ell$ indices $\{i_1,\dots,i_\ell\}\subseteq\{1,\dots,u\}$ such that, for any $w\in W$, $t(\vec{k})$ is fixed by $w$ iff $w\cdot t(\vec{k})$ and $t(\vec{k})$ agree in their $i_j$th diagonal entries for each $j=1,\dots,\ell$.
\end{proposition}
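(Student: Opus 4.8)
The plan is to exploit the structure common to all five exceptional cases: the torus element $t(\vec k)$ is diagonal, its nonzero (i.e., non-one) exponents are $\pm P_1,\dots,\pm P_u$ (for $G_2,F_4,E_7,E_8$) or $P_1,\dots,P_u$ (for $E_6$), and the $P_i$ are distinct nonzero integer combinations of $k_{\delta_1},\dots,k_{\delta_\ell}$. First I would record that each generator $n_r$, and hence every $w\in W$, is a signed permutation matrix in the representation of \cite{How01}: this is immediate from the explicit formula \eqref{nr}, since $x_r(1)$ and $x_{-r}(-1)$ are products of elementary transvections and the resulting product has exactly one nonzero entry ($\pm1$) in each row and column. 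Conjugating a diagonal matrix by a signed permutation matrix again produces a diagonal matrix whose diagonal entries are a permutation of the original ones (the signs on $w$ cancel, since $t(\vec k)$ is diagonal), so $w\cdot t(\vec k)$ is diagonal with entries $\exp(2\pi i\,\sigma_w(\pm P_j))$ for some permutation $\sigma_w$ of the multiset of exponents $\{\pm P_1,\dots,\pm P_u\}$ (together with the fixed $1$'s). This establishes the first sentence.

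For the second sentence, the key observation is that the full collection of exponents $\{\pm P_1,\dots,\pm P_u\}$ spans the $\ell$-dimensional space of linear forms in $k_{\delta_1},\dots,k_{\delta_\ell}$ — indeed it must, since these exponents are precisely the weights of $\h$ on the defining representation, and a faithful representation has weights spanning $\h^*$. Hence I can select a subset $\{P_{i_1},\dots,P_{i_\ell}\}$ of $\ell$ of the $P_j$ that is linearly independent over $\Q$, so that the linear map $\vec k\mapsto (P_{i_1}(\vec k),\dots,P_{i_\ell}(\vec k))$ is a bijection on $\R^\ell$ (equivalently, every other exponent $\pm P_j$ is a rational — in fact integer, after clearing denominators, but rational suffices for the argument — combination of the chosen $P_{i_1},\dots,P_{i_\ell}$). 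I would fix such a set of indices once and for all for each $G$, exhibiting it explicitly (as was done for $G_2$, where $\ell=2$ and one takes the first two exponents $P_1,P_2$, with $P_3=2P_1+P_2$ in the notation of \eqref{p1p2p3}).

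Now the equivalence follows in both directions. If $w$ fixes $t(\vec k)$ then \emph{a fortiori} $w\cdot t(\vec k)$ and $t(\vec k)$ agree in the chosen $\ell$ diagonal slots $i_1,\dots,i_\ell$ — this direction is trivial. Conversely, suppose $w\cdot t(\vec k)$ and $t(\vec k)$ agree in entries $i_1,\dots,i_\ell$. Since $w$ acts by permuting the exponents via $\sigma_w$, agreement in slot $i_j$ means $\sigma_w^{-1}(P_{i_j})\equiv P_{i_j}\pmod{\Z}$, i.e., the linear form $\sigma_w^{-1}(P_{i_j})-P_{i_j}$ vanishes modulo $\Z$ at $\vec k$; but both $\sigma_w^{-1}(P_{i_j})$ and $P_{i_j}$ are integer linear forms, so this is the condition that an explicit integer linear form in $\vec k$ be an integer. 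These $\ell$ conditions on the $\ell$ unknowns $k_{\delta_1},\dots,k_{\delta_\ell}$ cut out a sublattice; because $P_{i_1},\dots,P_{i_\ell}$ are linearly independent, \emph{any} exponent $\pm P_j$ is an integer combination of $P_{i_1},\dots,P_{i_\ell}$ up to an integer constant, and since $\vec k\in(\frac1m\Z/\Z)^\ell$ makes each $P_{i_j}(\vec k)$ well-defined, the remaining exponents are then also fixed modulo $\Z$. Hence $w\cdot t(\vec k)=t(\vec k)$. The main obstacle I anticipate is not the logical skeleton above but the verification, case by case, that such an independent set of $\ell$ exponents actually exists and can be pinned down concretely from the data of \cite{How01} — equivalently, that the chosen submatrix of weight coefficients is invertible over $\Q$ — together with making precise the claim that \emph{every} other exponent is recovered over $\Z$ (not merely $\Q$) once the chosen ones are fixed modulo $\Z$, which is what guarantees the fixed-point count is exactly the kernel size computed via Proposition \ref{prop:countkernel}.
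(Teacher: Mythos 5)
The paper's own proof is a one-line computational verification, and its single substantive remark is precisely the point on which your argument breaks: the indices must be chosen so that $P_{i_1},\dots,P_{i_\ell}$ form a $\Z$-spanning set of $\Z[k_{\delta_1},\dots,k_{\delta_\ell}]$, not merely a $\Q$-linearly independent set. Your converse direction asserts that ``because $P_{i_1},\dots,P_{i_\ell}$ are linearly independent, any exponent $\pm P_j$ is an integer combination of $P_{i_1},\dots,P_{i_\ell}$,'' and that implication is false: $\Q$-independence only makes the remaining exponents \emph{rational} combinations of the chosen ones, and a rational combination of linear forms that take integer values at $\vec{k}$ need not take an integer value at $\vec{k}$. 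Concretely, if the $\ell\times\ell$ coefficient matrix of the chosen forms has determinant $d$ with $\lvert d\rvert>1$, then for $m$ divisible by $d$ there are $\vec{k}\in\left(\frac{1}{m}\Z/\Z\right)^\ell$ at which all $\ell$ chosen differences $\sigma_w^{-1}(P_{i_j})-P_{i_j}$ are integral while some other difference $\sigma_w^{-1}(P_i)-P_i$ is not, and the claimed equivalence fails. The fix is to demand that the chosen $\ell\times\ell$ submatrix be unimodular; only then does integrality at the positions $i_1,\dots,i_\ell$ propagate $\Z$-linearly to all $u$ positions. You flag this at the very end as a detail ``to be made precise,'' but it is not a loose end --- it is the entire content of the second claim. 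Moreover, the \emph{existence} of such a unimodular $\ell$-element subset does not follow from your observation that the weights span $\h^*$: a generating set of a rank-$\ell$ lattice need not contain an $\ell$-element basis (e.g.\ $\{2,3\}$ generates $\Z$), so this genuinely requires the case-by-case check that constitutes the paper's proof.

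Two smaller inaccuracies. First, in your $G_2$ illustration the correct relation is $P_3=P_1-P_2$, not $2P_1+P_2$ (cf.\ Equation \eqref{p1p2p3}); the relevant fact is that the coefficient matrix $\left(\begin{smallmatrix}1&0\\-1&1\end{smallmatrix}\right)$ of $P_1,P_2$ is unimodular. Second, for $E_8$ in the $248$-dimensional representation the elements of $W$ are \emph{not} signed permutation matrices --- they act nontrivially on the $8$-dimensional zero weight space --- and the first claim of the proposition survives only because $t(\vec{k})$ restricts to the identity there; your blanket assertion that every $w$ is a signed permutation matrix is therefore too strong, though the paper itself offers no more than ``computationally verified'' for this part.
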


\begin{proof}
Computationally verified. For the second claim, one simply needs to find $P_{i_1},\dots,P_{i_\ell}$ that form a $\Z$-spanning set of $\Z[k_{\delta_1},\dots,k_{\delta_\ell}]$.
\end{proof}

The procedure for computing each $\lvert\Fix(w_c)\rvert$ is then completely analogous to what we have done in Example \ref{ex:g2fixedpoint}; see Algorithm \ref{alg:fixedpoint} in the appendix.

Finally, applying Theorem \ref{thm:burnsidereformulated} yields the following results. 

\begin{theorem}
For any positive integer $m$, the numbers $N(F_4,m)$, $N(E_6,m)$, $N(E_7,m)$, and $N(E_8,m)$ are given respectively by Tables \ref{nf4m}, \ref{ne6m}, \ref{ne7m}, and \ref{ne8m}.
\begin{table}\centering
\def\arraystretch{1.6}
\begin{tabular}{|c|c|}
\hline
$m\bmod 12$ & $N(F_4,m)$ \\ \hline
$0$ & $(m^4+24m^3+208m^2+768m+1152)/1152$ \\ \hline
$\pm1,\pm5$ & $(m^4+24m^3+190m^2+552m+385)/1152$ \\ \hline
$\pm2$ & $(m^4+24m^3+208m^2+768m+880)/1152$ \\ \hline
$\pm3$ & $(m^4+24m^3+190m^2+552m+513)/1152$ \\ \hline
$\pm4$ & $(m^4+24m^3+208m^2+768m+1024)/1152$ \\ \hline
$6$ & $(m^4+24m^3+208m^2+768m+1008)/1152$ \\ \hline
\end{tabular}
\caption{Results for $N(F_4,m)$}
\label{nf4m}
\end{table}

\begin{table}\centering
\def\arraystretch{1.6}
\begin{tabular}{|c|c|}
\hline
$m\bmod 6$ & $N(E_6,m)$ \\ \hline
$0$ & $(m^6+36m^5+510m^4+3600m^3+14184m^2+35424m+51840)/51840$ \\ \hline
$\pm1$ & $(m^6+36m^5+510m^4+3600m^3+13089m^2+22284m+12320)/51840$ \\ \hline
$\pm2$ & $(m^6+36m^5+510m^4+3600m^3+13224m^2+23904m+16640)/51840$ \\ \hline
$3$ & $(m^6+36m^5+510m^4+3600m^3+14049m^2+33804m+38880)/51840$ \\ \hline
\end{tabular}
\caption{Results for $N(E_6,m)$}
\label{ne6m}
\end{table}

\begin{table}\centering
\def\arraystretch{1.6}
\begin{tabularx}{\textwidth}{|P{0.12\textwidth}|Y|}
\hline
$m\bmod 12$ & $N(E_7,m)$ \\ \hline
$0$ & $(m^7+63m^6+1617m^5+22050m^4+175224m^3+830592m^2+2239488m+2903040)/2903040$ \\ \hline
$\pm1,\pm5$ & $(m^7+63m^6+1617m^5+21735m^4+162939m^3+663957m^2+1286963m+765765)/2903040$ \\ \hline
$\pm2$ & $(m^7+63m^6+1617m^5+22050m^4+175224m^3+830592m^2+2176208m+2126880)/2903040$ \\ \hline
$\pm3$ & $(m^7+63m^6+1617m^5+21735m^4+162939m^3+663957m^2+1304883m+927045)/2903040$ \\ \hline
$\pm4$ & $(m^7+63m^6+1617m^5+22050m^4+175224m^3+830592m^2+2221568m+2580480)/2903040$ \\ \hline
$6$ & $(m^7+63m^6+1617m^5+22050m^4+175224m^3+830592m^2+2194128m+2449440)/2903040$ \\ \hline
\end{tabularx}
\caption{Results for $N(E_7,m)$}
\label{ne7m}
\end{table}

\begin{table}\centering
\def\arraystretch{1.6}
\begin{tabularx}{\textwidth}{|P{0.23\textwidth}|Y|}
\hline
$m\bmod 60$ & $N(E_8,m)$ \\ \hline
$0$ & $(m^8+120m^7+6020m^6+163800m^5+2626008m^4+25260480m^3+142577280m^2+445824000m+696729600)/696729600$ \\ \hline
$\begin{aligned}
\pm1,\pm7&,\pm11,\pm13,\\ \pm17,\pm1&9,\pm23,\pm29
\end{aligned}$ & $(m^8+120m^7+6020m^6+163800m^5+2616558m^4+24693480m^3+130085780m^2+323507400m+215656441)/696729600$ \\ \hline
$\pm2,\pm14,\pm22,\pm26$ & $(m^8+120m^7+6020m^6+163800m^5+2626008m^4+25260480m^3+141860480m^2+418876800m+435250816)/696729600$ \\ \hline
$\pm3,\pm9,\pm21,\pm27$ & $(m^8+120m^7+6020m^6+163800m^5+2616558m^4+24693480m^3+130802580m^2+345011400m+348264441)/696729600$ \\ \hline
$\pm4,\pm8,\pm16,\pm28$ & $(m^8+120m^7+6020m^6+163800m^5+2626008m^4+25260480m^3+141860480m^2+424320000m+516898816)/696729600$ \\ \hline
$\pm5,\pm25$ & $(m^8+120m^7+6020m^6+163800m^5+2616558m^4+24693480m^3+130085780m^2+323507400m+243525625)/696729600$ \\ \hline
$\pm6,\pm18$ & $(m^8+120m^7+6020m^6+163800m^5+2626008m^4+25260480m^3+142577280m^2+440380800m+587212416)/696729600$ \\ \hline
$\pm10$ & $(m^8+120m^7+6020m^6+163800m^5+2626008m^4+25260480m^3+141860480m^2+418876800m+463120000)/696729600$ \\ \hline
$\pm12,\pm24$ & $(m^8+120m^7+6020m^6+163800m^5+2626008m^4+25260480m^3+142577280m^2+445824000m+668860416)/696729600$ \\ \hline
$\pm15$ & $(m^8+120m^7+6020m^6+163800m^5+2616558m^4+24693480m^3+130802580m^2+345011400m+376133625)/696729600$ \\ \hline
$\pm20$ & $(m^8+120m^7+6020m^6+163800m^5+2626008m^4+25260480m^3+141860480m^2+424320000m+544768000)/696729600$ \\ \hline
$30$ & $(m^8+120m^7+6020m^6+163800m^5+2626008m^4+25260480m^3+142577280m^2+440380800m+615081600)/696729600$ \\ \hline
\end{tabularx}
\caption{Results for $N(E_8,m)$}
\label{ne8m}
\end{table}
\end{theorem}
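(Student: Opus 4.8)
The plan is to run, for each of $F_4$, $E_6$, $E_7$, and $E_8$, exactly the pipeline used for $G_2$ in Section \ref{subsec:ng2m}, with the bookkeeping delegated to a computer because the Weyl groups are large. First I would read off from \cite{How01} the matrices $\psi(e_{\pm\delta_i})$ in the smallest faithful representation, form the Cartan elements $h_{\delta_i}=[\psi(e_{\delta_i}),\psi(e_{-\delta_i})]$, and thereby write down the typical torus element $t(\vec k)$ in the diagonal forms (2)--(5) listed above, together with the generators $n_{\delta_i}$ of $W$ via Equations \eqref{xr} and \eqref{nr}. This records the exponents $P_1,\dots,P_u$ as explicit integer linear forms in $k_{\delta_1},\dots,k_{\delta_\ell}$ and realizes each $n_{\delta_i}$ as a signed permutation matrix.

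Next, using the Carter classification as implemented in GAP~3 / CHEVIE \cite{gap3, chevie}, I would obtain for each conjugacy class $c\in\Cl(W)$ its size $\lvert c\rvert$ and a representative $w_c$ written as a word in the $n_{\delta_i}$; multiplying out gives $w_c$ as a signed permutation matrix. Conjugating $t(\vec k)$ by $w_c$ produces a signed permutation of the forms $P_1,\dots,P_u$, as in Proposition \ref{prop:wscorrespondence}. Fixing the $\ell$ distinguished coordinates $P_{i_1},\dots,P_{i_\ell}$ from that proposition, which $\Z$-span $\Z[k_{\delta_1},\dots,k_{\delta_\ell}]$, is equivalent to fixing all of $t(\vec k)$, so $\Fix(w_c)$ is the kernel over $\frac1m\Z/\Z$ of the integer $\ell\times\ell$ matrix $A_{w_c}$ whose $j$th row expresses $P_{i_j}-(w_c\cdot P)_{i_j}$ in the coordinates $k_{\delta_1},\dots,k_{\delta_\ell}$. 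Computing $\SNF(A_{w_c})$ and invoking Proposition \ref{prop:countkernel} then gives $\lvert\Fix(w_c)\rvert=m^{\ell-r}\prod_{i=1}^r\gcd(d_i,m)$ in closed form, exactly as in Example \ref{ex:g2fixedpoint} and Table \ref{tab:fixedpointg2}; see Algorithm \ref{alg:fixedpoint}.

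Finally, I would substitute these values into Equation \eqref{ngmformula}, $N(G,m)=\frac1{\lvert W\rvert}\sum_{c}\lvert c\rvert\,\lvert\Fix(w_c)\rvert$. Since each $\lvert\Fix(w_c)\rvert$ is a product of a power of $m$ with factors $\gcd(d_i,m)$, the sum is a quasi-polynomial in $m$ whose period is the least common multiple of all elementary divisors $d_i$ that occur; empirically only the primes $2,3,5$ appear, yielding periods $6$ for $G_2$ and $E_6$, $12$ for $F_4$ and $E_7$, and $60$ for $E_8$. Splitting into residue classes modulo the period, replacing each $\gcd(d_i,m)$ by its now-constant value, and clearing denominators produces the polynomials tabulated in Tables \ref{nf4m}--\ref{ne8m}. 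As a consistency check one verifies agreement with Djoković's formulas \cite{Djo85}, up to the misprint noted in Remark \ref{rem:misprint}.

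The main obstacle is scale and the accompanying risk of arithmetic error rather than any conceptual difficulty: $W(E_8)$ has order $696729600$ with $112$ conjugacy classes, and every step --- multiplying out the words $w_c$, tracking the signed permutation of $120$ linear forms, computing one Smith normal form per class, and reassembling a degree-$\ell$ quasi-polynomial of period up to $60$ --- must be carried out in exact arithmetic. Care is also needed to confirm the $\Z$-spanning claim of Proposition \ref{prop:wscorrespondence} for each group, so that fixing $\ell$ coordinates genuinely fixes $t(\vec k)$, and to confirm that no elementary divisor outside $\{1,2,3,5\}$ occurs, since an unnoticed factor would enlarge the true period and invalidate the residue-class split. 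Both are finite verifications that the computer disposes of.
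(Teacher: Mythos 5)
Your proposal follows essentially the same route as the paper: the paper's proof of this theorem is precisely to rerun the $G_2$ pipeline of Section~\ref{subsec:ng2m} (the representations of \cite{How01}, the Carter/CHEVIE conjugacy-class data, Proposition~\ref{prop:wscorrespondence} to reduce to $\ell$ coordinates, Smith normal form with Proposition~\ref{prop:countkernel}, and Theorem~\ref{thm:burnsidereformulated}) by computer for each of $F_4,E_6,E_7,E_8$. One small caution: elementary divisors such as $4$ do occur (the $F_4$ and $E_7$ answers genuinely depend on $m\bmod 4$), so the finite check is that every elementary divisor divides the claimed period, not that each lies in $\{1,2,3,5\}$; your earlier, prime-based formulation of that verification is the correct one.
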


\begin{remark}\label{rem:misprint}
For $G=E_6$ and $m\equiv3\bmod 6$, \cite[Table 4]{Djo85} provided the formula 
\begin{equation}\label{eq:djo}
N(E_6,6k+3)=(k+1)(k+2)(108k^4+688k^3+1395k^2+1269k+480)/120.
\end{equation}
After comparing Equation \eqref{eq:djo} with the relevant formula in Table \ref{ne6m}, we note that the correct coefficient of $k^3$ in the third term is $648$, rather than $688$.
\end{remark}

\FloatBarrier
\section{$N(G,m,s)$}\label{ngms}
As we did with $N(G,m)$, we begin by presenting our methods for $G=G_2$ (Section \ref{subsec:ng2ms}) and arriving at a complete computation for that case. We  then generalize to any exceptional Lie group (Section \ref{subsec:ngms}).

\subsection{Computation of $N(G_2,m,s)$}\label{subsec:ng2ms} We begin (Subsection \ref{g2distincteval}) by answering the following question: when does a typical torus element $t(k_{\delta_1},k_{\delta_2})$ have $s$ distinct eigenvalues? Once this question is answered, we determine (Subsection \ref{g2fixeds}) how many such elements are fixed under the action of an element of $W$. Then, we determine the final result (Subsection \ref{subsubsec:ng2msfinal}). 

\subsubsection{Number of distinct eigenvalues of $t(k_{\delta_1},k_{\delta_2})$}\label{g2distincteval} 

The multiset of eigenvalues of $t(k_{\delta_1},k_{\delta_2})$ is  
\begin{equation} \label{eqn:g2eigenvalues}
\left \{ 1, e^{\pm 2\pi iP_1}, e^{\pm 2\pi iP_2}, e^{\pm 2\pi iP_3}\right \} = \left\{1,e^{\pm 2\pi i k_{\delta_1}},e^{\pm 2\pi i(k_{\delta_1}-k_{\delta_2})},e^{\pm 2\pi i(2k_{\delta_1}-k_{\delta_2})}\right\}.
\end{equation}
We start by making the following observation.

\begin{proposition}\label{prop:g2repeat}
One or more repeats occur in \eqref{eqn:g2eigenvalues} iff
\begin{enumerate}
\item $P_i\pm P_j=0$ for some $i\ne j$ (i.e., $e^{\pm 2\pi iP_i}$ coincide with $e^{\mp 2\pi iP_j}$),
\item $P_i=0$ for some $i$ (i.e., $e^{\pm 2\pi iP_i}=1$), or
\item $2P_i=0$ but $P_i\ne 0$ for some $i$ (i.e., $P_i=\frac{1}{2}$ and $e^{\pm 2\pi iP_i}=-1$).
\end{enumerate}
\end{proposition}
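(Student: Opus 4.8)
The plan is to prove both implications by a direct case analysis on \emph{which} pair of slots in the seven-element multiset \eqref{eqn:g2eigenvalues} collides, bearing in mind throughout that the $P_i$ are to be read modulo $1$: each $P_i$ is an integer combination of $k_{\delta_1},k_{\delta_2}\in\frac{1}{m}\Z/\Z$, so equalities like ``$P_i=0$'' or ``$P_i\pm P_j=0$'' or ``$2P_i=0$'' are all meant in $\R/\Z$. In particular ``$2P_i=0$'' means $P_i\in\{0,\frac12\}$, which is exactly why clause (3) isolates the subcase $P_i\ne 0$ (the subcase $P_i=0$ already being clause (2)).

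For the backward direction I would just exhibit the collision in each clause. If $P_i\pm P_j=0$ with $i\ne j$, then $e^{2\pi iP_i}=e^{\mp 2\pi iP_j}$, and these sit in two different slots of \eqref{eqn:g2eigenvalues} because $i\ne j$, so a repeat occurs. If $P_i=0$ for some $i$, then $e^{2\pi iP_i}=1$ duplicates the central entry. If $2P_i=0$ but $P_i\ne0$, then $e^{2\pi iP_i}=e^{-2\pi iP_i}=-1$, again two distinct slots holding the same value.

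For the forward direction, suppose two of the seven eigenvalues occupying distinct slots agree. I would enumerate the unordered slot-pairs. (i) One slot is the central $1$ and the other is $e^{\varepsilon 2\pi iP_i}$ ($\varepsilon\in\{\pm1\}$): then $e^{2\pi iP_i}=1$, i.e.\ $P_i\equiv0\pmod 1$, so clause (2) holds. (ii) The two slots are $e^{\varepsilon 2\pi iP_i}$ and $e^{\eta 2\pi iP_j}$ with $i\ne j$: then $\varepsilon P_i-\eta P_j\equiv 0$, and multiplying by $\varepsilon$ gives $P_i\pm P_j\equiv 0$ (with the sign $-\varepsilon\eta$), so clause (1) holds. (iii) The two slots are $e^{2\pi iP_i}$ and $e^{-2\pi iP_i}$ for the same $i$: then $2P_i\equiv 0$, hence either $P_i\equiv 0$ (clause (2)) or $P_i\equiv\frac12$ with $P_i\ne0$ (clause (3)). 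Since these three types of slot-pairs are the only ways two distinct slots can coincide, one of the clauses must hold, completing the proof.

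The whole argument is bookkeeping rather than mathematics; the only thing requiring a little care — and the closest thing to an ``obstacle'' — is making the slot-pair enumeration in the forward direction visibly exhaustive, in particular not overlooking the same-index collision $e^{2\pi iP_i}=e^{-2\pi iP_i}$ in (iii), which is the unique origin of clause (3), and consistently interpreting every equality in $\R/\Z$ rather than $\R$. I do not anticipate any further difficulty.
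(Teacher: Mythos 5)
Your proof is correct, and it is exactly the routine slot-pair case analysis that the paper leaves implicit: the proposition is stated there as an unproved ``observation,'' with the parenthetical remarks in each clause serving as the only hint of the argument you spell out. Your careful points --- that all equalities are read in $\R/\Z$ and that the same-index collision $e^{2\pi iP_i}=e^{-2\pi iP_i}$ is the sole source of clause (3) --- match the intended reading, so nothing further is needed.
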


We translate this into the language of matrices and kernels as follows. Corresponding to $P_1$, $P_2$, and $P_3$ (see \eqref{p1p2p3}), we define 
\[\vec{v}_1=\begin{bmatrix} 1 & 0 \end{bmatrix},\ \vec{v}_2=\begin{bmatrix} -1 & 1 \end{bmatrix},\text{ and }\vec{v}_3=\begin{bmatrix} 2 & -1 \end{bmatrix},\]
i.e., $\vec{v_i}$ is the row vector of $P_i$ with respect to the basis $\{ k_{\delta_1},k_{\delta_2} \}$. By stacking together the distinct vectors (up to a sign) in the list \[\left\{\vec{v}_1\pm\vec{v}_2,\vec{v}_1\pm\vec{v}_3,\vec{v}_2\pm\vec{v}_3,\vec{v}_1,\vec{v}_2,\vec{v}_3,2\vec{v}_1,2\vec{v}_2,2\vec{v}_3\right\},\] we form the matrix\footnote{Since we are interested only in the kernels of submatrices of $P$ over $\frac{1}{m}\Z/\Z$, we are free to change the signs and ordering of the rows of $P$ as we see fit. Here, for aesthetic reasons, we require that the first nonzero element in each row of $P$ be positive.} 
\[P=\begin{bmatrix}
1 & 1 & 2 & 2 & 0 & 3 & 2 & 3 & 4 \\
0 & -1 & -1 & 0 & 1 & -1 & -2 & -2 & -2
\end{bmatrix}^T.\] Proposition \ref{prop:g2repeat} is thus recast into the following.

\begin{proposition}\label{prop:g2repeatkernel}
The pair $(k_{\delta_1},k_{\delta_2})^T\in\left(\frac{1}{m}\Z/\Z\right)^2$ leads to one or more repeats in \eqref{eqn:g2eigenvalues} iff $(k_{\delta_1},k_{\delta_2})^T$ is in the kernel of some $k\times 2$ submatrix of $P$, where $k\ge 1$.
\end{proposition}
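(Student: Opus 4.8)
The plan is to show that Proposition~\ref{prop:g2repeatkernel} is simply a bookkeeping restatement of Proposition~\ref{prop:g2repeat}, so the main work is matching the three combinatorial conditions there to membership in the kernel of some row-subset of $P$. First I would observe that since each $P_i$ is a $\Z$-linear combination of $k_{\delta_1},k_{\delta_2}$ with row vector $\vec v_i$, any $\Z$-linear combination of the $P_i$ evaluated at $(k_{\delta_1},k_{\delta_2})^T$ equals the corresponding combination of the $\vec v_i$ applied to that vector. In particular $P_i\pm P_j = 0$ at $(k_{\delta_1},k_{\delta_2})^T$ iff $(\vec v_i\pm\vec v_j)(k_{\delta_1},k_{\delta_2})^T = 0$ in $\frac1m\Z/\Z$, i.e.\ iff $(k_{\delta_1},k_{\delta_2})^T\in\ker(\vec v_i\pm\vec v_j)$; similarly $P_i=0$ corresponds to $\ker\vec v_i$, and $2P_i=0$ corresponds to $\ker(2\vec v_i)$. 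Note the condition ``$2P_i=0$ but $P_i\ne0$'' in Proposition~\ref{prop:g2repeat}(3) is not literally ``$(k_{\delta_1},k_{\delta_2})^T\in\ker(2\vec v_i)$'', since the latter also holds when $P_i=0$; but that extra case is already covered by condition~(2), so the union of the solution sets is unaffected by dropping the side condition.

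Next I would assemble these pieces. The rows of $P$ are, by construction, exactly the distinct-up-to-sign vectors in the list $\{\vec v_1\pm\vec v_2,\ \vec v_1\pm\vec v_3,\ \vec v_2\pm\vec v_3,\ \vec v_1,\vec v_2,\vec v_3,\ 2\vec v_1,2\vec v_2,2\vec v_3\}$. A repeat occurs in \eqref{eqn:g2eigenvalues} at a given pair iff at least one of the conditions (1)--(3) of Proposition~\ref{prop:g2repeat} holds there, iff that pair lies in the kernel of at least one of the listed row vectors (using the reduction above for condition~(3)), iff that pair lies in the kernel of at least one row of $P$. I would then note two elementary facts about kernels over $\frac1m\Z/\Z$: (i) the kernel of a single row $\vec r$ of $P$ is the kernel of the $1\times 2$ submatrix with that row, so the ``$k\ge 1$'' clause is automatically satisfied by taking $k=1$; and (ii) if $(k_{\delta_1},k_{\delta_2})^T$ lies in the kernel of a $k\times 2$ submatrix for some $k$, then in particular it lies in the kernel of each of that submatrix's rows, hence in the kernel of a single row. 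So ``in the kernel of some $k\times 2$ submatrix with $k\ge1$'' and ``in the kernel of some row of $P$'' describe the same set of pairs, and the latter is what Proposition~\ref{prop:g2repeat} characterizes.

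Putting this together gives the biconditional: $(k_{\delta_1},k_{\delta_2})^T$ leads to a repeat iff it is in the kernel of some row of $P$ iff it is in the kernel of some $k\times 2$ submatrix of $P$ with $k\ge1$. The deduction is essentially routine; the only point requiring a moment's care is the sign-change and row-reordering remark in the footnote --- I would make explicit that replacing a row $\vec r$ by $-\vec r$, or reordering rows, changes neither the kernel of a submatrix nor the collection of submatrices, so forming $P$ with the normalization ``first nonzero entry positive'' and the indicated column layout does not alter the statement. I do not expect a genuine obstacle here; if anything, the subtle part is simply being honest that condition~(3) is being relaxed to ``$2P_i=0$'' and that this relaxation is harmless because of condition~(2).
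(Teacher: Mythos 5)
Your argument is correct and matches the paper's (implicit) reasoning: the paper states this proposition as a direct recasting of Proposition~\ref{prop:g2repeat} without a written proof, and your careful translation of each condition into kernel membership of a row of $P$, together with the observation that ``kernel of some $k\times 2$ submatrix, $k\ge 1$'' is equivalent to ``kernel of some single row,'' is exactly the intended bookkeeping. Your explicit note that the side condition $P_i\ne 0$ in case~(3) can be dropped because it is absorbed by case~(2) is a worthwhile point of care that the paper leaves unsaid.
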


\begin{example}\label{ex:g2repeat}
Consider $m=3$ and the submatrix \[Q=\begin{bmatrix} 0 & 1 \\ 3 & -1 \\ 3 & -2 \end{bmatrix}\] of $P$. It is easily checked that $\left(\frac{1}{3},0\right)^T\in\left(\frac{1}{3}\Z/\Z\right)^2$ is in the kernel of $Q$. Observe that \[\begin{bmatrix} 0 & 1 \end{bmatrix}=\vec{v}_1+\vec{v}_2,\ \begin{bmatrix} 3 & -1 \end{bmatrix}=\vec{v}_1+\vec{v}_3,\text{ and }\begin{bmatrix} 3 & -2 \end{bmatrix}=-\vec{v}_2+\vec{v}_3.\] By the correspondence between $\vec{v}_i$ and $P_i$, we have that \[P_1\left(\frac{1}{3},0\right)=-P_2\left(\frac{1}{3},0\right),\ P_1\left(\frac{1}{3},0\right)=-P_3\left(\frac{1}{3},0\right),\text{ and }P_2\left(\frac{1}{3},0\right)=P_3\left(\frac{1}{3},0\right).\] 
Thus, in \eqref{eqn:g2eigenvalues} with $(k_{\delta_1},k_{\delta_2})=\left(\frac{1}{3},0\right)$, 
\[e^{\pm 2\pi ik_{\delta_1}},\ e^{\pm 2\pi i(k_{\delta_1}-k_{\delta_2})},\text{ and }e^{\pm 2\pi i(2k_{\delta_1}-k_{\delta_2})}\] all coincide. It can also be verified that $Q$ is the largest submatrix of $P$ having $\left(\frac{1}{3},0\right)^T$ in its kernel. Hence, we have fully captured the manner (in the sense of Proposition \ref{prop:g2repeatkernel}) by which the eigenvalues of $t\left(\frac{1}{3},0\right)$ repeat. We conclude that 
\[\left\{1,e^{\pm 2\pi ik_{\delta_1}}\right\}\] 
is the irredundant list of eigenvalues of $t\left(\frac{1}{3},0\right)$, i.e., $t\left(\frac{1}{3},0\right)$ has three distinct eigenvalues.
\end{example}

Our next step is to generalize the idea used in Example \ref{ex:g2repeat} to a method for determining the number of distinct eigenvalues of $t(k_{\delta_1},k_{\delta_2})$ based on the interaction between $(k_{\delta_1},k_{\delta_2})^T$ and the matrix $P$. For convenience, we will treat $\left(\frac{1}{m}\Z/\Z\right)^2$ as the kernel of the unique $0\times 2$ empty submatrix of $P$.

Let $S$ be a $k\times 2$ submatrix of $P$, where $0\le k\le 9$, and suppose that $(k_{\delta_1},k_{\delta_2})^T\in\left(\frac{1}{m}\Z/\Z\right)^2$ is in the kernel of $S$. If $(k_{\delta_1},k_{\delta_2})^T$ is not in the kernel of any other submatrix of $P$ properly containing $S$, then $S$ completely characterizes the manner (in the sense of Proposition \ref{prop:g2repeat}) by which the eigenvalues of $t(k_{\delta_1},k_{\delta_2})$ repeat. This motivates the imposition of the following partial order on the set of $k\times 2$ submatrices of $P$.
\begin{definition}
Let $\Scal$ be the set of $k\times 2$ submatrices of $P$, where $0\le k\le 9$. We define a partial order $\preceq$ on $\Scal$ by $S_1\preceq S_2$ iff $S_2$ is a submatrix of $S_1$.
\end{definition}

\begin{remark}
The partial order $\preceq$ can be interpreted as ``more restrictive than'' by viewing each $S\in\Scal$ as a set of conditions imposed on the pairs in $\left(\frac{1}{m}\Z/\Z\right)^2$, by which $(k_{\delta_1},k_{\delta_2})^T$ meets the conditions iff $(k_{\delta_1},k_{\delta_2})^T$ is in the kernel of $S$.
\end{remark}

\begin{proposition} \label{prop:g2eigencharacterization}
For each $(k_{\delta_1},k_{\delta_2})^T\in\left(\frac{1}{m}\Z/\Z\right)^2$, there exists a unique minimal element $S\in\Scal$ having $(k_{\delta_1},k_{\delta_2})^T$ in its kernel. 
\end{proposition}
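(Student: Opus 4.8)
The plan is to establish the claim by exhibiting the minimal element explicitly as the "full" submatrix of rows that annihilate the given vector, and then checking that this submatrix lies in $\Scal$. Fix $(k_{\delta_1},k_{\delta_2})^T \in (\frac{1}{m}\Z/\Z)^2$. First I would let $R$ denote the set of all rows $\vec r$ among the nine rows of $P$ (equivalently, among the vectors in the list $\{\vec v_1\pm\vec v_2, \vec v_1\pm\vec v_3, \vec v_2\pm\vec v_3, \vec v_1, \vec v_2, \vec v_3, 2\vec v_1, 2\vec v_2, 2\vec v_3\}$) that satisfy $\vec r\cdot(k_{\delta_1},k_{\delta_2})^T = 0$ in $\frac{1}{m}\Z/\Z$, and let $S^\star$ be the submatrix of $P$ consisting precisely of the rows in $R$ (in the fixed order inherited from $P$). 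Since $R$ is a well-defined subset of the rows of $P$, we have $S^\star\in\Scal$, and $(k_{\delta_1},k_{\delta_2})^T\in\ker S^\star$ by construction.

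Next I would verify that $S^\star$ is minimal in $(\Scal,\preceq)$ among submatrices having $(k_{\delta_1},k_{\delta_2})^T$ in their kernel. Recall $S_1\preceq S_2$ means $S_2$ is a submatrix of $S_1$, so "minimal with the kernel property" means "maximal as a row set among submatrices of $P$ whose kernel contains the vector." If $S\in\Scal$ also has $(k_{\delta_1},k_{\delta_2})^T\in\ker S$, then every row of $S$ annihilates the vector, hence every row of $S$ belongs to $R$, hence (remembering that rows of submatrices of $P$ are chosen from the rows of $P$, and that the ordering/signs were fixed) $S$ is a submatrix of $S^\star$, i.e. $S^\star\preceq S$. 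Thus $S^\star$ is $\preceq$-minimal with the desired property, and in fact it is the unique such minimal element: any minimal $S$ with the kernel property satisfies $S^\star\preceq S$, and minimality of $S$ forces $S = S^\star$. This gives both existence and uniqueness in one stroke.

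The only genuine subtlety — the part I expect to be the main obstacle, though it is minor — is bookkeeping about what counts as "the same" submatrix of $P$. The footnote preceding the construction of $P$ stipulates that signs and orderings of rows have been normalized (first nonzero entry positive), so each of the nine eigenvalue-collision conditions corresponds to a unique row of $P$, and a submatrix is determined by which of these nine rows it includes. I would state this identification once at the start of the proof so that the passage "every row of $S$ belongs to $R$, hence $S$ is a submatrix of $S^\star$" is rigorous rather than merely suggestive; without it, one could worry that $S$ has a row equal to $\pm$ a row of $S^\star$ but not literally identical. With that convention in hand the argument is essentially a tautology: $S^\star$ is just the union of all admissible conditions, and "more restrictive" conditions sit above less restrictive ones, so the union is the unique top element below which all satisfied conditions lie. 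I would not belabor the kernel computations themselves, since Proposition \ref{prop:countkernel} and the Smith normal form machinery are only needed later for \emph{counting} the vectors attached to each minimal submatrix, not for this structural statement.
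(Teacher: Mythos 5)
Your proposal is correct and matches the paper's proof exactly: the paper's one-line argument is precisely that the minimal element is the submatrix of $P$ consisting of all rows annihilating $(k_{\delta_1},k_{\delta_2})^T$. You have simply spelled out the routine verification (that this submatrix contains every other annihilating submatrix, whence minimality and uniqueness) that the paper leaves implicit.
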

\begin{proof} The minimal element $S$ is the submatrix of $P$ containing all rows annihilating $(k_{\delta_1},k_{\delta_2})^T$. 
\end{proof}

Given $S\in\Scal$ and $i,j\in \{1,2,3\}$, define an equivalence relation $\equiv_S$ on $H=\left\{0,\frac{1}{2},1,2,3\right\}$ by the rules:
 
\begin{enumerate}
\item $i\equiv_S j$ if $\vec{v}_i\pm\vec{v}_j$ is (up to a sign) a row vector of $S$;
\item $i\equiv_S 0$ if $\vec{v}_i$ is (up to a sign) a row vector of $S$; and
\item $i\equiv_S \frac{1}{2}$ if $2\vec{v}_i$ is (up to a sign) a row vector of $S$ but $\vec{v}_i$ is not.
\end{enumerate}
The number of distinct eigenvalues of $t(k_{\delta_1},k_{\delta_2})$ can then be expressed in terms of the number of equivalence classes of the set $H$ under $\equiv_S$.

\begin{proposition} \label{prop:g2sfunction}
Let $(k_{\delta_1},k_{\delta_2})^T\in\left(\frac{1}{m}\Z/\Z\right)^2$ and let $S\in\Scal$ be the minimal element having $(k_{\delta_1},k_{\delta_2})^T$ in its kernel. Then, the number of distinct eigenvalues of $t(k_{\delta_1},k_{\delta_2})$ is given by 
\begin{equation} \label{eqn:g2sfunction}
s(S)=
\begin{cases}
2\left\lvert H/\equiv_S\right\rvert-3 & \mbox{if }\left\lvert\left[\frac{1}{2}\right]\right\rvert=1 \\
2\left\lvert H/\equiv_S\right\rvert-2 & \mbox{if } \left\lvert\left[\frac{1}{2}\right]\right\rvert>1\\
\end{cases}.
\end{equation}
\end{proposition}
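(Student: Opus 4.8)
The plan is to pass from the combinatorial datum $\equiv_S$ to the honest pattern of coincidences among the seven numbers listed in \eqref{eqn:g2eigenvalues}, and then count. Write $\vec k=(k_{\delta_1},k_{\delta_2})^T$, and for $h\in H$ set $E(h)=\{e^{2\pi iP_h(\vec k)},e^{-2\pi iP_h(\vec k)}\}$ when $h\in\{1,2,3\}$, $E(0)=\{1\}$, and $E(\frac{1}{2})=\{-1\}$. The multiset of eigenvalues of $t(\vec k)$ is $\{1\}\cup E(1)\cup E(2)\cup E(3)$, so the number of distinct eigenvalues is $\lvert\{1\}\cup E(1)\cup E(2)\cup E(3)\rvert$. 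Introduce the auxiliary relation $h\sim h'$ on $H$ defined by $E(h)\cap E(h')\neq\emptyset$. Each $E(h)$ is closed under $z\mapsto z^{-1}$ and has at most two elements, and a short case check (according to whether a common value is $\pm1$ or not) shows that two such sets which meet must in fact coincide; hence $\sim$ is symmetric, transitive and reflexive, i.e.\ an equivalence relation.

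First I would prove that $\equiv_S$ equals $\sim$. By Proposition \ref{prop:g2eigencharacterization}, $S$ is exactly the submatrix of $P$ consisting of the rows that annihilate $\vec k$ over $\frac{1}{m}\Z/\Z$, and $P$ was assembled so as to contain, up to sign, every vector $\vec v_i\pm\vec v_j$, every $\vec v_i$, and every $2\vec v_i$. Reading the three defining clauses of $\equiv_S$ through this, they assert respectively that $\vec v_i\mp\vec v_j$ annihilates $\vec k$, that $\vec v_i$ annihilates $\vec k$, and that $2\vec v_i$ annihilates $\vec k$ while $\vec v_i$ does not; by the dictionary underlying Proposition \ref{prop:g2repeat} (for instance, $\vec v_i\mp\vec v_j$ annihilates $\vec k$ iff $e^{2\pi iP_i(\vec k)}=e^{\pm2\pi iP_j(\vec k)}$) these are precisely the conditions $i\sim j$, $i\sim 0$, $i\sim\frac{1}{2}$. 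Thus every generator of $\equiv_S$ is a $\sim$-relation, and since $\sim$ is already transitive, $\equiv_S\subseteq\sim$; the reverse inclusion is the same chain of equivalences run backwards, since any overlap $E(h)\cap E(h')\neq\emptyset$ is witnessed by one of the listed vectors annihilating $\vec k$, hence by a row of $S$. The one point needing a line of care is that a single row of $P$ may match several clauses at once, because of the linear relation $\vec v_1=\vec v_2+\vec v_3$ among the three weight vectors (and, in the general situation treated later, possibly relations such as $\vec v_i=2\vec v_j$); this is harmless precisely because ``annihilates $\vec k$'' is a property of the integer vector itself, so each clause such a row triggers records a genuine coincidence. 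I expect this identification $\equiv_S=\sim$ to be the only real content of the proposition.

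The count is then bookkeeping with the classes of $\equiv_S=\sim$. From $\equiv_S\subseteq\sim$ one gets that the class of $0$ lies in $\{h:E(h)=\{1\}\}$ and the class of $\frac{1}{2}$ lies in $\{h:E(h)=\{-1\}\}$, so these two classes are distinct and, being automatically present, account for $2$ of the $\lvert H/\equiv_S\rvert$ classes. For $i$ in the class of $0$ we have $E(i)=\{1\}$; for $i$ in the class of $\frac{1}{2}$ we have $E(i)=\{-1\}$; and for $i$ in any other class $C$ the set $E(i)$ has exactly two elements, equals $E(j)$ for every $j\in C$, contains neither $1$ nor $-1$, and is disjoint from the sets attached to the other classes. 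Hence $\{1\}\cup E(1)\cup E(2)\cup E(3)$ splits as $\{1\}$, together with $\{-1\}$ exactly when some index lies in the class of $\frac{1}{2}$, i.e.\ when $\lvert[\frac{1}{2}]\rvert>1$, together with one two-element block for each of the remaining $\lvert H/\equiv_S\rvert-2$ classes. Adding these contributions gives $1+2(\lvert H/\equiv_S\rvert-2)=2\lvert H/\equiv_S\rvert-3$ when $\lvert[\frac{1}{2}]\rvert=1$, and one more when $\lvert[\frac{1}{2}]\rvert>1$, namely $2\lvert H/\equiv_S\rvert-2$, which is exactly \eqref{eqn:g2sfunction}.
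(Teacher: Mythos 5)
Your proof is correct and takes essentially the same route as the paper: the paper's (much terser) proof performs exactly the same count — each class other than $[0]$ and $\left[\frac{1}{2}\right]$ contributes two eigenvalues, $[0]$ contributes the ever-present $1$, and $\left[\frac{1}{2}\right]$ contributes $-1$ precisely when $\left\lvert\left[\frac{1}{2}\right]\right\rvert>1$. The only difference is that you explicitly verify that $\equiv_S$ coincides with the actual coincidence relation among the eigenvalue pairs (including the care needed for the relation $\vec{v}_1=\vec{v}_2+\vec{v}_3$), a point the paper leaves implicit in the discussion preceding the proposition.
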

\begin{proof} Each of the equivalence classes $[1]$, $[2]$, and $[3]$, when they do not coincide with $[0]$ or $\left[\frac{1}{2}\right]$, contributes two distinct eigenvalues, giving $2\left (\left\lvert H/\equiv_S\right\rvert-2\right )$. The equivalence class $[0]$ contributes the eigenvalue 1 which is present for all $(k_{\delta_1},k_{\delta_2})^T$. The equivalence class $\left[\frac{1}{2}\right]$ contributes the eigenvalue $-1$ exactly when $\left \lvert \left[\frac{1}{2}\right]\right\rvert>1$. The result follows. 
\end{proof}

\begin{example}
Referring back to Example \ref{ex:g2repeat}, we see that \[\left\{0\right\},\left\{\frac{1}{2}\right\},\left\{1,2,3\right\}\] are the equivalence classes in $H$ under $\equiv_Q$, so $s(Q)=3$. On the other hand, when $m=3$, $\left(\frac{1}{3},0\right)^T\in\left(\frac{1}{m}\Z/\Z\right)^2$ is in the kernel of $Q$ but not that of any other submatrix of $P$ properly containing $Q$, and $t\left(\frac{1}{3},0\right)$ has three distinct eigenvalues.
\end{example}

\subsubsection{Number of torus elements with $s$ eigenvalues fixed by an element of $W$}\label{g2fixeds}

We begin by establishing a correspondence between $W$ and certain submatrices of $P$, whereby $t(k_{\delta_1},k_{\delta_2})$ is fixed by $w\in W$ iff $(k_{\delta_1},k_{\delta_2})^T$ falls in the kernel of the corresponding submatrix $S_w$. Recall that, ignoring the common factor of $2\pi i$, the exponents of the first three diagonal entries of $t(k_{\delta_1},k_{\delta_2})$ are 
\[(P_1,P_2,P_3).\] 
as given in Equation \eqref{p1p2p3}.
Let $\sigma_w$ be the signed permutation on $\{1,2,3\}$ that is associated to $w$. For $i=1,2,3$, we define
\[\sigma_w(P_i)=\begin{cases} P_{\sigma_w(i)} & \sigma_w(i)>0 \\
-P_{-\sigma_w(i)} & \sigma_w(i)<0 \end{cases}.\]
Thus, the exponents of the first three diagonal entries of $w\cdot t(k_{\delta_1},k_{\delta_2})$ are, once again ignoring the common factor of $2\pi i$,
\[(\sigma_w(P_1),\sigma_w(P_2),\sigma_w(P_3)).\]
Now, $t(k_{\delta_1},k_{\delta_2})$ and $w\cdot t(k_{\delta_1},k_{\delta_2})$ agree in their $i$th diagonal entries, for $i=1,2$, iff one of the following holds:
\begin{enumerate}
\item $\sigma_w(i)=i$;
\item $\sigma_w(i)=-i$ and $2P_i=0$, i.e., $(k_{\delta_1},k_{\delta_2})^T$ is in the kernel of $2\vec{v}_i$;
\item $\sigma_w(i)=\pm j$, where $j\in\{1,2,3\}$ and $j\ne i$, and $P_i\mp P_j=0$, i.e., $(k_{\delta_1},k_{\delta_2})^T$ is in the kernel of $\vec{v}_i\mp\vec{v}_j$. 
\end{enumerate}
These conditions lead to the following definition of the matrix $S_w$ associated with $w\in W$.
\begin{definition}  Let $w\in W$ and let $\sigma _w$ be the signed permutation associated to $w$. Then, $S_w\in \Scal$ is the (possibly empty) matrix obtained by stacking together the (distinct up to a sign) rows of $P$ in the following:
\begin{enumerate}
\item $2\vec{v}_i$, if $\sigma_w(i)=-i$, for $i=1,2$;
\item $\vec{v}_i\mp\vec{v}_j$, if $\sigma_w(i)=\pm j$, for $i=1,2$, $j\in\{1,2,3\}$, and $j\ne i$.
\end{enumerate}
\end{definition}

\begin{proposition} \label{prop:g2fixedpointcondition}
The torus element $t(k_{\delta_1},k_{\delta_2})$ is fixed by $w$ iff $(k_{\delta_1},k_{\delta_2})^T$ is in the kernel of $S_w$.
\end{proposition}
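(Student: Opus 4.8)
The plan is to reduce the fixed-point condition on the full $7\times 7$ torus element to a condition involving only its first two diagonal entries, and then to match that condition against the case analysis preceding the definition of $S_w$, recognizing it as membership in $\ker S_w$.

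The first step is to invoke the observation already recorded in Section \ref{subsec:ng2m} — the $\ell=2$, $u=3$ instance of Proposition \ref{prop:wscorrespondence} — that, because the exponents $P_1,P_2,P_3$ of the first three diagonal entries of $t(k_{\delta_1},k_{\delta_2})$ satisfy the $\Z$-linear relation $P_3=P_1-P_2$ (equivalently $\vec{v}_3=\vec{v}_1-\vec{v}_2$) while the remaining diagonal entries are $1$ and the $e^{\mp 2\pi i P_i}$, the element $w$ fixes $t(k_{\delta_1},k_{\delta_2})$ if and only if $w\cdot t(k_{\delta_1},k_{\delta_2})$ and $t(k_{\delta_1},k_{\delta_2})$ agree in their first two diagonal entries. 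It then suffices to examine, for each $i\in\{1,2\}$, when the $i$th diagonal entries of $t$ and $w\cdot t$ coincide. The $i$th diagonal entry of $w\cdot t$ has exponent $\sigma_w(P_i)$, so exactly one of three mutually exclusive and exhaustive cases occurs — and these are precisely the three conditions listed just before the definition of $S_w$: when $\sigma_w(i)=i$ the entries agree unconditionally and $S_w$ picks up no row from index $i$; when $\sigma_w(i)=-i$ they agree iff $2P_i\equiv 0$, i.e. iff $(k_{\delta_1},k_{\delta_2})^T\in\ker(2\vec{v}_i)$, and $S_w$ picks up the row $2\vec{v}_i$; and when $\sigma_w(i)=\pm j$ with $j\ne i$ they agree iff $P_i\mp P_j\equiv 0$, i.e. iff $(k_{\delta_1},k_{\delta_2})^T\in\ker(\vec{v}_i\mp\vec{v}_j)$, and $S_w$ picks up the row $\vec{v}_i\mp\vec{v}_j$.

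Concatenating the two steps yields the proposition. Indeed, $t(k_{\delta_1},k_{\delta_2})$ is fixed by $w$ exactly when it agrees with $w\cdot t(k_{\delta_1},k_{\delta_2})$ in diagonal entries $1$ and $2$; by the case analysis this holds exactly when, for each $i\in\{1,2\}$, the kernel condition attached to $\sigma_w(i)$ is met (with no condition imposed when $\sigma_w(i)=i$); and this in turn is precisely the statement that $(k_{\delta_1},k_{\delta_2})^T$ lies in the common kernel of all the rows gathered in the definition of $S_w$, i.e. that $(k_{\delta_1},k_{\delta_2})^T\in\ker S_w$. I would add two small remarks: deleting duplicate rows and flipping row signs do not alter a kernel, so restricting to rows ``distinct up to a sign'' is harmless; and when $\sigma_w$ fixes both indices $1$ and $2$ the matrix $S_w$ is the empty $0\times 2$ matrix, whose kernel is by convention all of $\left(\frac{1}{m}\Z/\Z\right)^2$, which is consistent since such a $w$ (necessarily $w=I$) fixes every element of $T$. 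I do not anticipate a real obstacle here: the substantive work was done in the reduction to two diagonal entries and in the elementary trigonometry behind the three-case analysis, so this argument is essentially bookkeeping; the only point that merits an explicit sentence is that those three cases for $\sigma_w(i)$ are exhaustive and pairwise exclusive and that each contributes exactly the row of $P$ declared in the definition of $S_w$.
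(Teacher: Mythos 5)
Your proof is correct and follows exactly the route the paper intends: the paper states Proposition \ref{prop:g2fixedpointcondition} without a separate proof precisely because it is the immediate concatenation of the reduction to the first two diagonal entries (via $P_3=P_1-P_2$) with the three-case analysis that motivates the definition of $S_w$, which is what you spell out. Your added remarks on sign flips, duplicate rows, and the empty-matrix convention are harmless bookkeeping consistent with the paper's conventions.
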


\begin{remark}
The matrices $S_w$, where $w\in W$ is a conjugacy class representative of our choice, have almost been computed in Table \ref{tab:fixedpointg2} and used in Example \ref{ex:g2fixedpoint}. The only modification we have made here is that we are excluding the rows of zeros and repetitions that may arise from comparing $t(k_{\delta_1},k_{\delta_2})$ and $w\cdot t(k_{\delta_1},k_{\delta_2})$ in their first two diagonal entries. See Table \ref{tab:g2sw}.
\end{remark}

\begin{table}[h]
\centering
\begin{tabular}{|c|c|c|}
\hline
$\lvert c\rvert$ & $w_c\in c$ & $S_{w_c}$ \\ \hline
$1$ & $I$ & the empty submatrix \\ \hline
$3$ & $n_{\delta_1}(1)$ & $\begin{bmatrix}2 & -1\end{bmatrix}$ \\ \hline
$3$ & $n_{\delta_2}(1)$ & $\begin{bmatrix}3 & -2 \end{bmatrix}$ \\ \hline
$2$ & $n_{\delta_2}(1)n_{\delta_1}(1)$ & $\begin{bmatrix}2 & -1 \\ 1 & 0\end{bmatrix}$ \\ \hline
$2$ & $\left(n_{\delta_2}(1)n_{\delta_1}(1)\right)^2$ & $\begin{bmatrix}3 & -1 \\ 0 & 1\end{bmatrix}$ \\ \hline
$1$ & $\left(n_{\delta_2}(1)n_{\delta_1}(1)\right)^3$ & $\begin{bmatrix}2 & 0 \\ 2 & -2\end{bmatrix}$ \\ \hline
\end{tabular}
\caption{The matrices $S_{w_c}$, where $w_c$ is a conjugacy class representative of our choice.}
\label{tab:g2sw}
\end{table}

Equipped with the means to detect when $t(k_{\delta_1},k_{\delta_2})$ has $s$ distinct eigenvalues or is fixed by some $w\in W$, we are ready to study the number of fixed points in $E(T,m,s)$ under the action of each $w\in W$. The approach that we take involves repeated exploitations of the partial order on $\Scal$.

For each $S\in\Scal$, let \begin{align*}
G_m(S)&=\left\{(k_{\delta_1},k_{\delta_2})^T\in\left(\frac{1}{m}\Z/\Z\right)^2:\ S(k_{\delta_1},k_{\delta_2})^T=\vec{0}\right\},\text{ and} \\
F_m(S)&=\left\{(k_{\delta_1},k_{\delta_2})^T\in\left(\frac{1}{m}\Z/\Z\right)^2:\ S(k_{\delta_1},k_{\delta_2})^T=\vec{0},\ T(k_{\delta_1},k_{\delta_2})^T\ne\vec{0}\text{ for all }T\prec S\right\},
\end{align*}
and define, accordingly, 
\begin{equation}\label{eqn:gm-fm-def}
g_m(S)=\left\lvert G_m(S)\right\rvert,\text{ and }f_m(S)=\left\lvert F_m(S)\right\rvert.
\end{equation}

It follows from Proposition \ref{prop:g2eigencharacterization} that we have the partition 
\begin{equation} \label{eqn:g2partition}
E(T,m,s)=\bigcup_{\substack{S\in \Scal \\ s(S)=s}}\left\{t(k_{\delta_1},k_{\delta_2}):\ (k_{\delta_1},k_{\delta_2})^T\in F_m(S)\right\}.
\end{equation}

\begin{theorem} \label{prop:g2allornothing}
If $F_m(S)\ne\emptyset$, then each $w\in W$ fixes either all or none of the $t(k_{\delta_1},k_{\delta_2})$ for $(k_{\delta_1},k_{\delta_2})^T\in F_m(S)$. In particular, $w$ fixes all such elements iff $S\preceq S_w$.
\end{theorem}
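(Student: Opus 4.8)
The plan is to reduce the whole statement to a single bookkeeping fact about row sets: for a pair $\vec{k}=(k_{\delta_1},k_{\delta_2})^T\in F_m(S)$, the rows of $P$ that annihilate $\vec{k}$ are \emph{exactly} the rows of $S$. Once this is established, the theorem follows from Proposition \ref{prop:g2fixedpointcondition} together with the definition of $S_w$ by a purely formal manipulation, with no property of $m$ entering.

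First I would prove that fact. Let $\vec{k}\in F_m(S)$ and let $S'$ be the unique minimal element of $\Scal$ having $\vec{k}$ in its kernel, as furnished by Proposition \ref{prop:g2eigencharacterization}; recall that $S'$ is the submatrix of $P$ consisting of all rows that annihilate $\vec{k}$. Since $\vec{k}\in\ker S$, every row of $S$ annihilates $\vec{k}$, so the row set of $S$ is contained in that of $S'$; in the ordering of $\Scal$ this says $S'\preceq S$. If the containment of row sets were proper, then $S'\prec S$, and $\vec{k}\in\ker S'$ would contradict $\vec{k}\in F_m(S)$, which forbids $\vec{k}$ from lying in the kernel of any $T\prec S$. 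Hence $S'=S$, as claimed.

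Next I would translate ``$w$ fixes $t(\vec{k})$'' into a statement about row sets. By Proposition \ref{prop:g2fixedpointcondition}, $w$ fixes $t(\vec{k})$ iff $\vec{k}\in\ker S_w$. By construction every row of $S_w$ is, up to a sign, a row of $P$; therefore $\vec{k}\in\ker S_w$ iff every row of $S_w$ annihilates $\vec{k}$, iff (by the fact just proved) every row of $S_w$ is, up to a sign, a row of $S$, iff $S_w$ is a submatrix of $S$, iff $S\preceq S_w$.

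Finally I would conclude. The condition ``$S\preceq S_w$'' derived above depends only on $S$ and $w$, not on the particular $\vec{k}\in F_m(S)$. Hence, whenever $F_m(S)\ne\emptyset$, a fixed $w\in W$ fixes $t(\vec{k})$ for every $\vec{k}\in F_m(S)$ when $S\preceq S_w$, and for no $\vec{k}\in F_m(S)$ otherwise, which is exactly the assertion of the theorem. I expect the only delicate point to be the bookkeeping around the ``up to a sign'' identification of rows and the orientation of $\preceq$ (a larger matrix is smaller in the order); once the conventions are kept consistent with the definitions of $\Scal$, of $P$, and of $S_w$, there is no substantive obstacle.
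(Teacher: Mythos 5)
Your proposal is correct and follows essentially the same route as the paper's proof: both hinge on the observation that for $\vec{k}\in F_m(S)$ the matrix $S$ is precisely the minimal element of $\Scal$ annihilating $\vec{k}$, combined with Proposition \ref{prop:g2fixedpointcondition} to turn ``$w$ fixes $t(\vec{k})$'' into the $\vec{k}$-independent condition $S\preceq S_w$. Your write-up merely makes explicit the verification that $S$ is that minimal element, which the paper asserts directly from the definition of $F_m(S)$.
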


\begin{proof}
Suppose that $F_m(S)\ne\emptyset$, and fix $w\in W$. Suppose that for some $(k_{\delta_1},k_{\delta_2})^T\in F_m(S)$, $t(k_{\delta_1},k_{\delta_2})$ is fixed by $w$. Then, by Proposition \ref{prop:g2fixedpointcondition}, $(k_{\delta_1},k_{\delta_2})^T$ is in the kernel of $S_w\in\Scal$. However, $S$ is the unique minimal element of $\Scal$ having $(k_{\delta_1},k_{\delta_2})^T$ in its kernel, so $S\preceq S_w$. Hence, the kernel of $S$ is contained inside that of $S_w$. By Proposition \ref{prop:g2fixedpointcondition}, this implies that $w$ fixes all of the $t(k_{\delta_1},k_{\delta_2})$, where $(k_{\delta_1},k_{\delta_2})^T\in F_m(S)$.
\end{proof}

It follows that to determine the number of fixed points in $E(T,m,s)$ under the action of each $w\in W$, it makes practical sense for us to compute $f_m(S)$ in the event that $F_m(S)\ne\emptyset$. We introduce a necessary condition for $S\in\Scal$ to satisfy $F_m(S)\ne\emptyset$.

\begin{proposition} \label{prop:g2optimization}
Let $S\in\Scal$. If there exists a row vector $\vec{v}$ of $P$ that (a) is an integer combination of the row vectors of $S$ but (b) is not a row vector of $S$, then $f_m(S)=0$.
\end{proposition}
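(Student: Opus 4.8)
The plan is to exploit the hypothesis to manufacture a submatrix of $P$ that lies strictly below $S$ in the partial order $\preceq$ (i.e.\ strictly contains $S$ as a submatrix) but has the same kernel as $S$ over $\frac{1}{m}\Z/\Z$; by the very definition of $F_m(S)$ this forces $F_m(S)=\emptyset$. Concretely, let $\vec v$ be the row of $P$ supplied by the hypothesis, and let $S'\in\Scal$ be the submatrix of $P$ whose rows are those of $S$ together with $\vec v$. By hypothesis (b) the vector $\vec v$ is not already a row of $S$, so $S'$ genuinely has one more row than $S$ and $S'$ properly contains $S$, which by the definition of $\preceq$ means $S'\prec S$. (One should note in passing that $S'$ really is a member of $\Scal$: since $P$ has $9$ rows, the only way adding a row could overflow is $S=P$, which is incompatible with hypothesis (b).)

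Next I would check that $G_m(S)=G_m(S')$, i.e.\ $S$ and $S'$ have the same kernel over $\frac{1}{m}\Z/\Z$. The inclusion $G_m(S')\subseteq G_m(S)$ is automatic because $S$ is a submatrix of $S'$. For the reverse, take $(k_{\delta_1},k_{\delta_2})^T\in G_m(S)$; by hypothesis (a), $\vec v$ is an integer combination of the rows of $S$, and by $\Z$-linearity an integer combination of row vectors each annihilating $(k_{\delta_1},k_{\delta_2})^T$ over $\frac{1}{m}\Z/\Z$ again annihilates it, so $\vec v\,(k_{\delta_1},k_{\delta_2})^T=\vec0$ and hence $(k_{\delta_1},k_{\delta_2})^T\in G_m(S')$.

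Finally, every $(k_{\delta_1},k_{\delta_2})^T\in G_m(S)$ is then annihilated by the matrix $S'$, which satisfies $S'\prec S$. Consulting the definition of $F_m(S)$, such a point is disqualified from membership in $F_m(S)$. Therefore $F_m(S)=\emptyset$ and $f_m(S)=0$, as claimed.

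I do not expect a genuine obstacle here; the proof is essentially a one-line observation once the bookkeeping is straight. The two points that merit a moment of care are the direction of the partial order (appending a row moves a matrix \emph{down}, not up, under $\preceq$) and the verification that the enlarged matrix still lies in $\Scal$, both of which are disposed of by the remark that $S=P$ is excluded by hypothesis (b). The substantive content — that an integer-linear dependence of the new row on the rows of $S$ makes it redundant at the level of kernels over $\frac{1}{m}\Z/\Z$ — is immediate from $\Z$-linearity.
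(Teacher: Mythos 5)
Your proof is correct and follows the same route as the paper's: adjoin $\vec v$ to $S$ to obtain $S'\prec S$ with the same kernel over $\frac{1}{m}\Z/\Z$ (the paper phrases this as ``the smallest submatrix of $P$ containing both $S$ and $\vec v$''), and conclude from the definition of $F_m(S)$ that it is empty. Your extra care about the direction of $\preceq$ and membership of $S'$ in $\Scal$ is sound but adds nothing beyond the paper's argument.
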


\begin{proof}
Suppose that such a $\vec{v}$ exists. Let $S'$ be the smallest submatrix of $P$ containing both $S$ and $\vec{v}$. Since $\vec{v}$ is an integer combination of the row vectors of $S$, $\ker S$ and $\ker S'$ over $\frac{1}{m}\Z/\Z$ coincide. It follows from the definition that $F_m(S)=\emptyset$, and so $f_m(S)=0$.
\end{proof}

Let $\Mcal$ denote the subset of $\Scal$ that consists of the matrices in $\Scal$ satisfying the following property: every row vector of $P$ that is an integer combination of the row vectors of $S$ is a row vector of $S$. Then, by Proposition \ref{prop:g2optimization}, $S\in \Mcal$ whenever $F_m(S)\ne\emptyset$. Note that $\Mcal$ inherits the partial order from $\Scal$.

Observe that, for all $S\in\Scal$, 
\begin{equation}\label{gmsum}
g_m(S)=\sum_{T\preceq S}f_m(T)=\sum_{\substack{T\preceq S \\ T\in\Mcal}}f_m(T).
\end{equation}
 Then, it follows from the M\"obius  inversion formula for posets that, for all $S\in\Mcal$,
\begin{equation} \label{eqn:g2fgrelation}
f_m(S)=\sum_{\substack{T\preceq S\\ T\in\Mcal}}\mu (T,S)g_m(T),
\end{equation}
where $\mu$ is the M\"obius function of $\Mcal$ defined as
\begin{equation} \label{eqn:mobiusfunction}
\mu(T,S)=\begin{cases} 1, & \mbox{if }T=S \\ -\sum_{T\preceq U\prec S}\mu(T,U), & \mbox{if }T\prec S \\ 0, & \mbox{else} \end{cases}.
\end{equation}

With respect to computing $f_m(S)$, it remains to determine $g_m(T)$ and $\mu (T,S)$ for each $T\preceq S$ with $T\in\Mcal$. However, by definition, $g_m(T)$ is the size of the kernel of $T$ over $\frac{1}{m}\Z/\Z$, and so it can be directly computed using Proposition \ref{prop:countkernel}. Thus, in conjunction with Proposition \ref{prop:countkernel}, Equation \eqref{eqn:g2fgrelation} provides us with a practical method for computing $f_m(S)$.

In consideration of the partition in \eqref{eqn:g2partition}, the number of fixed points of $E(T,m,s)$ under the action of some $w\in W$ is the sum of the number of elements in each subset on the right-hand side that are fixed by $w$. In view of Theorem \ref{prop:g2allornothing}, we have 
\begin{equation} \label{eqn:g2msfixedpoint}
\left\lvert\Fix_s(w)\right\rvert=\sum_{\substack{S\in\Mcal \\ s(S)=s \\ S\preceq S_{w}}}f_m(S).
\end{equation}
Equation \eqref{eqn:g2msfixedpoint} combines with Theorem \ref{thm:burnsidereformulated} to give the following formula. 
\begin{corollary} \label{cor:ng2ms}We have 
\begin{equation} 
N(G_2,m,s)=\frac{1}{\lvert W\rvert}\sum_{c\in\Cl(W)}\left\lvert c\right\rvert\sum_{\substack{S\in\Mcal \\ s(S)=s \\ S\preceq S_{w_c}}}f_m(S),
\end{equation}
where $f_m(S)$ can be computed using Equation \eqref{eqn:g2fgrelation}.
\end{corollary}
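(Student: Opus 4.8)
The plan is to assemble Corollary \ref{cor:ng2ms} purely as a bookkeeping consequence of the structural results already proved for $G_2$, so the proof is short and mostly cites earlier statements. First I would invoke Theorem \ref{thm:burnsidereformulated}, Equation \eqref{ngmsformula}, which already reduces $N(G_2,m,s)$ to the weighted average $\frac{1}{|W|}\sum_{c\in\Cl(W)}|c|\,|\Fix_s(w_c)|$ over conjugacy class representatives; this step is free since conjugate elements of $W$ induce bijections between their sets of fixed points in $T_s(G_2,m)$. It then only remains to substitute the expression for $|\Fix_s(w_c)|$ furnished by Equation \eqref{eqn:g2msfixedpoint}.

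Next I would justify Equation \eqref{eqn:g2msfixedpoint} itself in case it is not regarded as already established. The argument proceeds in two moves. The partition \eqref{eqn:g2partition}, which rests on Proposition \ref{prop:g2eigencharacterization} (existence of a unique minimal $S\in\Scal$ with $(k_{\delta_1},k_{\delta_2})^T$ in its kernel) together with Proposition \ref{prop:g2sfunction} (the number of distinct eigenvalues depends only on that minimal $S$), writes $E(T,m,s)$ as the disjoint union over $S\in\Scal$ with $s(S)=s$ of the sets $\{t(k_{\delta_1},k_{\delta_2}):(k_{\delta_1},k_{\delta_2})^T\in F_m(S)\}$. Intersecting with the fixed-point set of a given $w\in W$ and applying Theorem \ref{prop:g2allornothing} — which says $w$ fixes all or none of each $F_m(S)$-block, and fixes the block precisely when $S\preceq S_w$ — collapses the count to $\sum_{S:\,s(S)=s,\,S\preceq S_w}f_m(S)$. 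Finally, Proposition \ref{prop:g2optimization} shows $f_m(S)=0$ unless $S\in\Mcal$, so the sum may be restricted to $S\in\Mcal$, giving exactly \eqref{eqn:g2msfixedpoint}.

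Combining these, for each conjugacy class representative $w_c$ we have $|\Fix_s(w_c)|=\sum_{S\in\Mcal,\,s(S)=s,\,S\preceq S_{w_c}}f_m(S)$, and plugging this into \eqref{ngmsformula} yields the displayed formula. The closing remark that $f_m(S)$ is computable via \eqref{eqn:g2fgrelation} is simply a pointer back to the M\"obius inversion identity on the poset $\Mcal$: the relation $g_m(S)=\sum_{T\preceq S,\,T\in\Mcal}f_m(T)$ of \eqref{gmsum} inverts to \eqref{eqn:g2fgrelation}, and each $g_m(T)$ is the kernel size computed by Proposition \ref{prop:countkernel} from the Smith normal form of $T$.

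There is essentially no obstacle here — the corollary is a packaging statement, and every nontrivial ingredient (the Burnside reduction, the eigenvalue characterization via the minimal submatrix, the all-or-nothing fixing lemma, the $\Mcal$-restriction, and M\"obius inversion) has already been proved. The only point requiring a moment's care is making sure the partition \eqref{eqn:g2partition} and the fixed-point set of $w_c$ interact correctly, i.e. that intersecting a disjoint union with $\Fix_s(w_c)$ and summing is legitimate; this is immediate once Theorem \ref{prop:g2allornothing} guarantees each block is either entirely inside or entirely outside $\Fix_s(w_c)$. Thus the proof is just: cite \eqref{ngmsformula}, cite \eqref{eqn:g2msfixedpoint}, substitute, and remark on the computability of $f_m(S)$ via \eqref{eqn:g2fgrelation}.
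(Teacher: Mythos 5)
Your proposal is correct and follows essentially the same route as the paper: the corollary is obtained by substituting Equation \eqref{eqn:g2msfixedpoint} (itself derived from the partition \eqref{eqn:g2partition}, Theorem \ref{prop:g2allornothing}, and the restriction to $\Mcal$ via Proposition \ref{prop:g2optimization}) into the Burnside-type formula \eqref{ngmsformula} of Theorem \ref{thm:burnsidereformulated}. Your added care about intersecting the partition with the fixed-point set block by block is exactly the content the paper leaves implicit, so there is nothing to correct.
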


\subsubsection{Completing the computation of $N(G_2,m,s)$}\label{subsubsec:ng2msfinal}

We introduce a simple optimization for implementing Corollary \ref{cor:ng2ms}.

\begin{definition}[Row lattice generated by a matrix]
Let $A\in\M_{m\times n}(\Z)$. The \emph{row lattice} generated by $A$ is the set \[\Lambda_R(A):=\left\{\vec{v}^TA:\vec{v}\in\Z^m\right\}.\]
\end{definition}

\begin{fact} \label{fact:g2fact}
For all $S\in\Scal$, there exists a $k\times 2$ submatrix $S'$ of $P$, where $0\le k\le 2$, such that $\Lambda_R(S)=\Lambda_R(S')$. See Algorithm \ref{alg:findp} in the appendix for a direct way to check this.
\end{fact}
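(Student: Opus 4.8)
When $S$ has at most two rows there is nothing to do ($S'=S$), so the content is for $S$ with three or more rows. The plan is to reduce the general statement to a finite check on triples of rows, organizing the argument by the rank of $\Lambda_R(S)$ viewed as a subgroup of $\Z^2$ (the rows of $P$ lie in $\Z^2$, so $\Lambda_R(S)$ is free abelian of rank $0$, $1$, or $2$). The two ingredients are the evident identity $\Lambda_R(S_1\cup S_2)=\Lambda_R(S_1)+\Lambda_R(S_2)$ and induction on the number of rows.

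\emph{Ranks $0$ and $1$.} If $\mathrm{rank}\,\Lambda_R(S)=0$, then every row of $S$ is zero; since $P$ has no zero row, $S$ must be empty and we take $S'=S$. If $\mathrm{rank}\,\Lambda_R(S)=1$, all rows of $S$ are proportional, so (reading off $P$) they all lie in one of the three families of parallel rows $\{(1,0),(2,0)\}$, $\{(1,-1),(2,-2)\}$, $\{(2,-1),(4,-2)\}$, or consist of copies of a single row. Each of these families has the form $\{\vec r,2\vec r\}$, so any submatrix of $P$ built from it has row lattice $\Z\vec r$ or $2\Z\vec r$, each of which is $\Lambda_R$ of a single row of $P$; hence $k\le 1$ suffices.

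\emph{Rank $2$.} Suppose $S$ has $n\ge 3$ rows. Since $\Lambda_R(S)$ has rank $2$, two rows $\vec r_1,\vec r_2$ of $S$ are linearly independent; pick a third row $\vec r_3$ of $S$. I claim — and this is the finite computation performed by Algorithm~\ref{alg:findp} — that $\Lambda_R(\{\vec r_1,\vec r_2,\vec r_3\})=\Lambda_R(\{\vec a,\vec b\})$ for some two rows $\vec a,\vec b$ of $P$ (not necessarily among $\vec r_1,\vec r_2,\vec r_3$). Granting this,
\[
\Lambda_R(S)=\Lambda_R(\{\vec r_1,\vec r_2,\vec r_3\})+\sum_{\vec r\in S\setminus\{\vec r_1,\vec r_2,\vec r_3\}}\Z\vec r=\Lambda_R\bigl(\{\vec a,\vec b\}\cup(S\setminus\{\vec r_1,\vec r_2,\vec r_3\})\bigr),
\]
and the matrix on the right is again a submatrix of $P$ generating the same rank-$2$ lattice but with at most $2+(n-3)=n-1$ rows, so the inductive hypothesis applies.

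The argument thus reduces the Fact to the finite claim $(\star)$: \emph{for every triple of rows of $P$ whose row lattice has rank $2$, that lattice equals $\Lambda_R$ of some pair of rows of $P$.} There are at most $\binom{9}{3}=84$ triples (far fewer after discarding rank-$1$ ones), and one compares each resulting row lattice — say via its Hermite normal form — against the at most $\binom{9}{2}$ pair-lattices; this is precisely what Algorithm~\ref{alg:findp} does. I expect this residual check to be the only genuine obstacle: $(\star)$ is a special feature of this particular $P$, not a general phenomenon — it already fails for a generic integer $9\times 2$ matrix (for instance the rows $(6,0),(0,6),(2,3),(3,2)$ together generate $\Z^2$, yet no two of them do) — so a computation-free proof seems out of reach. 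Elsewhere the only care needed is to check that the reduction step truly lowers the row count and to note that negating or reordering rows, the freedom used in defining $P$, leaves every row lattice unchanged.
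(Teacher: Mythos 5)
Your proposal is correct and takes essentially the same route as the paper: the paper offers no written proof of this Fact, deferring entirely to the computation in Algorithm~\ref{alg:findp}, and your reduction to the finite check $(\star)$ on rank-$2$ triples is precisely the content of the stabilization Proposition that the paper states without proof in Section~\ref{subsubsec:ngmsf4}, which your induction on the number of rows in effect proves. Like the paper, you leave the residual $\binom{9}{3}$-triple verification to the machine, so the Fact remains a computational one in both treatments.
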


Let $\row(P)$ denote the set of row vectors of $P$. Given any $S\in\Mcal$, the set of row vectors of $S$ coincides with $\Lambda_R(S)\cap\row(P)$ by construction. Conversely, given any $S'\in\Scal$, the submatrix $S$ of $P$ with the set of row vectors $\Lambda_R(S')\cap\row(P)$ is necessarily in $\Mcal$. Hence, $\Mcal$ consists of the matrices in $\Scal$ having the set of row vectors $\Lambda_R(S)\cap\row(P)$ for some $S\in\Scal$. With Fact \ref{fact:g2fact}, we deduce the following.

\begin{proposition} \label{prop:g2mmatrices}
The set $\Mcal$ consists of the matrices in $\Scal$ having the set of row vectors $\Lambda_R(S)\cap\row(P)$, where $S\in\Scal$ has no more than two rows.
\end{proposition}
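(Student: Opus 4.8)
The plan is to combine two elementary set-theoretic facts about row lattices, both essentially already recorded in the paragraph preceding the statement, with Fact \ref{fact:g2fact}. The first is an intrinsic description of $\Mcal$: a submatrix $S\in\Scal$ lies in $\Mcal$ if and only if its set of row vectors equals $\Lambda_R(S)\cap\row(P)$. Indeed, every row of $S$ trivially belongs to $\Lambda_R(S)\cap\row(P)$, while a row of $P$ lying in $\Lambda_R(S)$ is by definition an integer combination of the rows of $S$, hence a row of $S$ exactly under the defining property of $\Mcal$. The second fact is the corresponding closure statement in the other direction: for any $S'\in\Scal$, the submatrix of $P$ whose row set is $\Lambda_R(S')\cap\row(P)$ again lies in $\Mcal$, since an integer combination of vectors drawn from $\Lambda_R(S')\cap\row(P)$ lies in $\Lambda_R(S')$, so whenever such a combination happens to be a row of $P$ it is already one of those vectors. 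Together these give
\[
\Mcal=\bigl\{\,S\in\Scal:\ \row(S)=\Lambda_R(S')\cap\row(P)\ \text{for some}\ S'\in\Scal\,\bigr\},
\]
where $\row(S)$ denotes the set of row vectors of $S$.

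It then remains only to restrict the quantifier ``for some $S'\in\Scal$'' to two-row submatrices. The inclusion $\supseteq$ is immediate. For $\subseteq$, I would take $S'\in\Scal$ arbitrary and invoke Fact \ref{fact:g2fact} to produce a submatrix $S''$ of $P$ with at most two rows and $\Lambda_R(S'')=\Lambda_R(S')$; intersecting with $\row(P)$ then gives $\Lambda_R(S'')\cap\row(P)=\Lambda_R(S')\cap\row(P)$. Hence the family of sets $\{\Lambda_R(S')\cap\row(P)\}$ is unchanged whether $S'$ ranges over all of $\Scal$ or only over its submatrices with at most two rows, which is precisely the assertion of the proposition.

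The only substantive ingredient here is Fact \ref{fact:g2fact} itself, and I do not expect it to present a genuine obstacle: since $P$ has just two columns, $\Lambda_R(S)\subseteq\Z^2$ for every $S\in\Scal$, so it is generated by at most two vectors, and Algorithm \ref{alg:findp} carries out the finite check that such a pair of generators can always be selected from among the rows of $P$ themselves. The one point requiring care throughout is to keep the three objects --- a submatrix $S$, its finite set of row vectors $\row(S)$, and the lattice $\Lambda_R(S)$ those vectors span --- carefully distinguished, since the characterization of $\Mcal$ trades freely among all three.
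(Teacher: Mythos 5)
Your proof is correct and follows essentially the same route as the paper: the two set-theoretic closure observations you record are exactly the ones stated in the paragraph preceding the proposition, and the reduction to two-row generators is, as in the paper, a direct application of Fact \ref{fact:g2fact}. Your explicit care in distinguishing $S$, $\row(S)$, and $\Lambda_R(S)$ is a welcome clarification but not a different argument.
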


It turns out that there are $19$ matrices in $\Mcal$, which we list in Table \ref{tab:g2mcal} along with their $s$- and $g_m$-values. With this information, we are completely prepared to compute $N(G_2,m,s)$.

\begin{table}[h]
\centering
\begin{tabular}{|c|c|c|}
\hline
$S\in\Mcal$ & $s(S)$ & $g_m(S)$ \\ \hline

the empty submatrix & $7$ & $m^2$ \\ \hline

$\begin{bmatrix}2 & 0\end{bmatrix}$, $\begin{bmatrix}2 & -2\end{bmatrix}$, $\begin{bmatrix}4 & -2\end{bmatrix}$ & $6$ & $\begin{cases} 2m & m\equiv 0\bmod 2 \\ m & \mbox{else} \end{cases}$ \\ \hline

$\begin{bmatrix}3 & -2\end{bmatrix}$, $\begin{bmatrix}3 & -1\end{bmatrix}$, $\begin{bmatrix}0 & 1\end{bmatrix}$ & $5$ & $m$ \\ \hline

$\begin{bmatrix}0 & 1 \\ 4 & -2\end{bmatrix}$, $\begin{bmatrix}3 & -2 \\ 2 & 0\end{bmatrix}$, $\begin{bmatrix}3 & -1 \\ 2 & -2\end{bmatrix}$ & $4$ & $\begin{cases} 4 & m\equiv 0\bmod 4 \\ 1 & m\equiv 1,3\bmod 4 \\ 2 & \mbox{else} \end{cases}$ \\ \hline

$\begin{bmatrix}1 & 0 \\ 2 & 0\end{bmatrix}$, $\begin{bmatrix}1 & -1 \\ 2 & -2\end{bmatrix}$, $\begin{bmatrix}2 & -1 \\ 4 & -2\end{bmatrix}$ & $3$ & $m$ \\ \hline

$\begin{bmatrix}3 & 3 & 0 \\ -2 & -1 & 1\end{bmatrix}^T$ & $3$ & $\begin{cases} 3 & m\equiv 0\bmod 3 \\ 1 & \mbox{else} \end{cases}$ \\ \hline

$\begin{bmatrix}2 & 2 & 4 \\ 0 & -2 & -2\end{bmatrix}^T$ & $2$ & $\begin{cases} 4 & m\equiv 0\bmod 2 \\ 1 & \mbox{else} \end{cases}$ \\ \hline

$\begin{matrix}
\begin{bmatrix}2 & 0 & 2 & 2 & 4 \\ -1 & 1 & 0 & -2 & -2\end{bmatrix}^T, \\ \begin{bmatrix}1 & 3 & 2 & 2 & 4 \\ 0 & -2 & 0 & -2 & -2\end{bmatrix}^T, \\ \begin{bmatrix}1 & 3 & 2 & 2 & 4 \\ -1 & -1 & 0 & -2 & -2\end{bmatrix}^T
\end{matrix}$ & $2$ & $\begin{cases} 2 & m\equiv 0\bmod 2 \\ 1 & \mbox{else} \end{cases}$ \\ \hline

$\begin{bmatrix} 1 & 1 & 2 & 2 & 0 & 3 & 2 & 3 & 4 \\ 0 & -1 & -1 & 0 & 1 & -1 & -2 & -2 & -2 \end{bmatrix}^T$ & $1$ & $1$ \\ \hline
\end{tabular}
\caption{The $19$ matrices in $\Mcal$.}
\label{tab:g2mcal}
\end{table}

\begin{example}[Computing $N(G_2,m,2)$]
For convenience, let 
\begin{align*}
S_1&=\begin{bmatrix}2 & 2 & 4 \\ 0 & -2 & -2\end{bmatrix}^T, \\
S_2&=\begin{bmatrix}2 & 0 & 2 & 2 & 4 \\ -1 & 1 & 0 & -2 & -2\end{bmatrix}^T, \\
S_3&=\begin{bmatrix}1 & 3 & 2 & 2 & 4 \\ 0 & -2 & 0 & -2 & -2\end{bmatrix}^T, \\
S_4&=\begin{bmatrix}1 & 3 & 2 & 2 & 4 \\ -1 & -1 & 0 & -2 & -2\end{bmatrix}^T,\text{ and }\\
S_5&=P=\begin{bmatrix} 1 & 1 & 2 & 2 & 0 & 3 & 2 & 3 & 4 \\ 0 & -1 & -1 & 0 & 1 & -1 & -2 & -2 & -2 \end{bmatrix}^T.
\end{align*}
We begin by computing the M\"obius coefficients using Equation \eqref{eqn:mobiusfunction}:
\begin{align*}
\mu(S_5,S_4)&=\mu(S_5,S_3)=\mu(S_5,S_2)=\mu(S_4,S_1)=\mu(S_3,S_1)=\mu(S_2,S_1)=-1,\text{ and } \\
\mu(S_5,S_1)&=-\left[\mu(S_5,S_5)+\mu(S_5,S_4)+\mu(S_5,S_3)+\mu(S_5,S_2)\right]=2.
\end{align*}
Then, we can compute $f_m(S_i)$ for $i=1,\dots,4$ using Equation \eqref{eqn:g2fgrelation}:
\begin{align*}
f_m(S_1)&=\sum_{i=1}^5\mu(S_i,S_1)g_m(S_i)=\begin{cases} 4 & m\equiv 0\bmod 2 \\ 1 & \mbox{else} \end{cases}-3\cdot\begin{cases} 2 & m\equiv 0\bmod 2 \\ 1 & \mbox{else} \end{cases}+2=0,\text{ and } \\
f_m(S_2)&=\mu(S_5,S_2)g_m(S_5)+\mu(S_2,S_2)g_m(S_2)=\begin{cases} 1 & m\equiv 0\bmod 2 \\ 0 & \mbox{else} \end{cases}=f_m(S_3)=f_m(S_4).
\end{align*}
Since $f_m(S_1)=0$, it can be neglected for the purpose of applying Corollary \ref{cor:ng2ms}. To check for fixed points, we observe that, by referring to Table \ref{tab:g2sw},
\begin{enumerate}
\item $S_2,S_3,S_4\preceq S_{I},S_{(n_{\delta_2}(1)n_{\delta_1}(1))^3}$,
\item $S_2\preceq S_{n_{\delta_1}(1)}$,\text{ and }
\item $S_3\preceq S_{n_{\delta_2}(1)}$.
\end{enumerate}
Hence, \begin{align*}
N(G_2,m,2)&=\frac{1}{12}\left[(1+1)\cdot(f_m(S_2)+f_m(S_3)+f_m(S_4))+3\cdot f_m(S_2)+3\cdot f_m(S_3)\right] \\
&=\begin{cases} 1 & m\equiv 0\bmod 2 \\ 0 & \mbox{else} \end{cases}.
\end{align*}
\end{example}

Repeating this for the other possible values of $s$, we obtain the following theorem.

\begin{theorem}
For any positive integer $m$ and $1\le s\le 7$, the number $N(G_2,m,s)$ is given by Table \ref{tab:Ng2ms}.
\begin{table}
\centering
\def\arraystretch{1.6}
\begin{tabular}{|c|c|c|c|c|c|c|c|}
\hline
$N(G_2,m,s)$ & \multicolumn{7}{c|}{$s$} \\ \hline
$m\bmod 12$ & $1$ & $2$ & $3$ & $4$ & $5$ & $6$ & $7$ \\ \hline
$0$ & $1$ & $1$ & $m/2$ & $1$ & $(m-6)/2$ & $(m-4)/4$ & $m(m-9)/12+2$ \\ \hline
$1,5,7,11$ & $1$ & $0$ & $(m-1)/2$ & $0$ & $(m-1)/2$ & $0$ & $(m-1)(m-5)/12$ \\ \hline
$2,10$ & $1$ & $1$ & $(m-2)/2$ & $0$ & $(m-2)/2$ & $(m-2)/4$ & $(m-2)(m-7)/12$ \\ \hline
$3,9$ & $1$ & $0$ & $(m+1)/2$ & $0$ & $(m-3)/2$ & $0$ & $(m-3)^2/12$ \\ \hline
$4,8$ & $1$ & $1$ & $(m-2)/2$ & $1$ & $(m-4)/2$ & $(m-4)/4$ & $(m-4)(m-5)/12$ \\ \hline
$6$ & $1$ & $1$ & $m/2$ & $0$ & $(m-4)/2$ & $(m-2)/4$ & $(m-3)(m-6)/12$ \\ \hline
\end{tabular}
\caption{Results for $N(G_2,m,s)$.}
\label{tab:Ng2ms}
\end{table}
\end{theorem}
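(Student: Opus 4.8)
The plan is to obtain the explicit formulas in Table~\ref{tab:Ng2ms} by mechanically executing the machinery built up in Sections~\ref{subsec:ng2m} and~\ref{subsec:ng2ms}, exactly as done in the worked example for $s=2$. Concretely, for each $s\in\{1,\dots,7\}$ I would carry out the following steps.

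\textbf{Step 1: Identify the relevant matrices.} From Table~\ref{tab:g2mcal}, collect the matrices $S\in\Mcal$ with $s(S)=s$. For instance, $s=1$ gives only $S=P$; $s=7$ gives only the empty submatrix; $s=3$ gives the five two-row matrices together with the transposed $3\times 2$ matrix; and so on. For each such $S$, I already have $g_m(S)$ tabulated (a fixed gcd expression, hence a function of $m\bmod 12$ once one accounts for the relevant moduli $2,3,4$).

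\textbf{Step 2: Compute the M\"obius coefficients and hence $f_m(S)$.} Within each fixed value of $s$ I must work within the full poset $\Mcal$: $f_m(S)=\sum_{T\preceq S,\,T\in\Mcal}\mu(T,S)g_m(T)$ by Equation~\eqref{eqn:g2fgrelation}. Since $\Mcal$ has only $19$ elements and the containment relations are easy to read off from the row-vector descriptions in Table~\ref{tab:g2mcal}, the M\"obius function is a finite, small computation (the $s=2$ example shows the pattern: chains of length two give $\mu=-1$, and the one element covering three others at the bottom gives $\mu=2$). This yields $f_m(S)$ as an explicit function of $m\bmod 12$ for every $S$ contributing to the given $s$.

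\textbf{Step 3: Determine which conjugacy-class representatives fix each $S$, and assemble.} Using Table~\ref{tab:g2sw} and Theorem~\ref{prop:g2allornothing}, for each conjugacy class $c$ with representative $w_c$ I record which of the relevant $S$ satisfy $S\preceq S_{w_c}$; by Equation~\eqref{eqn:g2msfixedpoint} this gives $\lvert\Fix_s(w_c)\rvert=\sum_{S\in\Mcal,\,s(S)=s,\,S\preceq S_{w_c}}f_m(S)$. Plugging these into Corollary~\ref{cor:ng2ms} with the class sizes from Table~\ref{tab:weylgroupg2} (namely $1,3,3,2,2,1$, summing to $\lvert W\rvert=12$) produces a formula for $N(G_2,m,s)$ as a weighted average, which simplifies to the entries of Table~\ref{tab:Ng2ms}. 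A useful sanity check is that $\sum_{s=1}^{7}N(G_2,m,s)=N(G_2,m)$, matching Table~\ref{ng2m}.

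The conceptual content is entirely carried by the results already proved (Propositions~\ref{prop:countkernel}, \ref{prop:g2eigencharacterization}, \ref{prop:g2sfunction}, \ref{prop:g2fixedpointcondition}, \ref{prop:g2optimization}, Theorem~\ref{prop:g2allornothing}, and Corollary~\ref{cor:ng2ms}); what remains is bookkeeping. The main obstacle is therefore purely organizational: correctly tracking the containment poset on $\Mcal$ (a $k\times 2$ submatrix $S_2$ is $\preceq S_1$ iff $S_2$ is a submatrix of $S_1$, i.e.\ every row of $S_2$ is a row of $S_1$), not confusing the direction of the order when it is phrased as ``more restrictive,'' and keeping the case split on $m\bmod 12$ consistent across the several sub-moduli $2,3,4$ that enter through the various $g_m$ values. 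Once the poset data is laid out in a single table, each $N(G_2,m,s)$ falls out by a short linear-algebra-free calculation, exactly as in the displayed $s=2$ example.
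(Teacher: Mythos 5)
Your proposal is correct and follows exactly the route the paper takes: the paper's ``proof'' of this theorem is precisely the statement that one repeats the displayed $s=2$ computation (Möbius inversion over $\Mcal$ via Equation \eqref{eqn:g2fgrelation}, then assembly through Corollary \ref{cor:ng2ms} using Tables \ref{tab:g2mcal} and \ref{tab:g2sw}) for the remaining values of $s$. Your added sanity check $\sum_{s=1}^{7}N(G_2,m,s)=N(G_2,m)$ is a sensible extra, but the substance of the argument is identical.
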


\subsection{Computation of $N(G,m,s)$ for all exceptional Lie groups.}\label{subsec:ngms}

The approach we take here directly extends the one in Section \ref{subsec:ng2ms}. We invite the reader to refer back to Section \ref{subsec:ng2ms} for motivation and proofs.

\subsubsection{Number of distinct eigenvalues of $t(\vec{k})$}
First, we develop a systematic method to determine the number of distinct eigenvalues of $t(\vec{k})$ for all $\vec{k}\in\left(\frac{1}{m}\Z/\Z\right)^\ell$. Using the typical element of the torus given near the beginning of Section \ref{subsec:ngm}, we have that the multiset of eigenvalues of $t(\vec{k})$ is 
\begin{equation} \label{eqn:eigenvalues}
\begin{aligned}
\left\{1,\dots,1,e^{\pm 2\pi iP_1},\dots,e^{\pm 2\pi iP_u}\right\}, & \mbox{ if }G=G_2,F_4,E_8; \\
\left\{e^{2\pi iP_1},\dots,e^{2\pi iP_u}\right\}, & \mbox{ if }G=E_6;\text{ or} \\
\left\{e^{\pm 2\pi iP_1},\dots,e^{\pm 2\pi iP_u}\right\}, & \mbox{ if }G=E_7.
\end{aligned}
\end{equation}

We begin by noting when repeats (besides those among $1,\dots,1$ in the first case) occur in \eqref{eqn:eigenvalues}.

\begin{proposition}\label{prop:repeat}
The $\ell$-tuple  $\vec{k}\in\left(\frac{1}{m}\Z/\Z\right)^\ell$ introduces (additional) repeats to \eqref{eqn:eigenvalues} iff
\begin{enumerate}
\item 
\begin{enumerate}
\item $P_i\pm P_j=0$ for some $i\ne j$, or
\item $P_i=0$ for some $i$, or
\item $2P_i=0$ but $P_i\ne 0$ for some $i$,
\end{enumerate}
for $G=G_2,F_4,E_7,E_8$.
\item  $P_i=P_j$ for some $i\ne j$,

for $G=E_6$.
\end{enumerate}
\end{proposition}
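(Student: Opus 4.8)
The plan is to reduce Proposition \ref{prop:repeat} to a direct case analysis on the multiset of eigenvalues displayed in \eqref{eqn:eigenvalues}, exactly as was done for $G_2$ in Proposition \ref{prop:g2repeat}. First I would treat the three cases $G=G_2,F_4,E_7,E_8$ together, since in each of these the eigenvalue multiset has the symmetric form $\{1,\dots,1\}\cup\{e^{2\pi i P_t},e^{-2\pi i P_t}:1\le t\le u\}$ (where in the $E_7$ case there are no extra $1$'s and in $G_2,F_4,E_8$ there are some), and the $P_t$ are the distinct nonzero integer combinations of $k_{\delta_1},\dots,k_{\delta_\ell}$ described near the beginning of Section \ref{subsec:ngm}. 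The key observation is that two elements of this multiset coincide precisely when one of the following happens: $e^{2\pi i P_i}=e^{2\pi i P_j}$ or $e^{2\pi i P_i}=e^{-2\pi i P_j}$ for some pair of indices, or $e^{\pm 2\pi i P_i}=1$ for some $i$, or the two members $e^{2\pi i P_i}$ and $e^{-2\pi i P_i}$ of a single pair coincide. Since the $P_t$ are \emph{distinct} as formal integer combinations, the equality $e^{2\pi i P_i}=e^{2\pi i P_j}$ for $i\ne j$ over $\frac{1}{m}\Z/\Z$ is itself an ``additional'' coincidence and is exactly captured by $P_i-P_j=0$, i.e.\ the $-$ branch of case 1(a). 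Translating ``$e^{2\pi i\theta}=1$'' into ``$\theta=0$ in $\R/\Z$'' gives case 1(b), ``$e^{2\pi i P_i}=e^{-2\pi i P_j}$'' gives the $+$ branch of 1(a), and ``$e^{2\pi i P_i}=e^{-2\pi i P_i}$ with $P_i\ne 0$'' gives exactly case 1(c) (since $2P_i=0$ but $P_i\ne 0$ forces $P_i=\tfrac12$ in $\R/\Z$, so $e^{\pm 2\pi iP_i}=-1$). Conversely each of 1(a)--(c) visibly produces a repeat among the listed eigenvalues. This establishes the ``iff'' for the first case.

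For $G=E_6$ the situation is simpler: the eigenvalue multiset is just $\{e^{2\pi i P_1},\dots,e^{2\pi i P_u}\}$ with no $1$'s appended and no $\pm$ pairing, because the standard $27$-dimensional representation of $E_6$ is not self-dual. Here a repeat among the eigenvalues occurs iff $e^{2\pi i P_i}=e^{2\pi i P_j}$ for some $i\ne j$, which is iff $P_i=P_j$ in $\R/\Z$; again, because the $P_t$ are distinct as formal combinations, this is a genuine coincidence forced by $\vec k$, giving case 2. (One should note explicitly that, unlike the other four groups, there is no $P_i=0$ or $2P_i=0$ clause: since the $P_t$ are nonzero and pairwise distinct but the list is not closed under negation, the only possible coincidence is $P_i=P_j$; in particular $P_i=0$ would mean $e^{2\pi iP_i}=1$, but since $1$ is not automatically in the list this is already subsumed under ``$P_i=P_j$'' only if some other $P_j=0$ too — so in fact $P_i=0$ alone does \emph{not} introduce a repeat, which is why case 2 has the form it does and the reader should be warned of this asymmetry with case 1.)

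I would write the proof as two short paragraphs mirroring Proposition \ref{prop:g2repeat}: one handling the self-dual groups $G_2,F_4,E_7,E_8$ by the trichotomy above, and one handling $E_6$. The only genuinely substantive point is to justify that no repeats are being double-counted or missed — that every coincidence among the eigenvalues, \emph{other than} the pre-existing ones among the appended $1$'s, falls into exactly one of the listed types — and this follows immediately from the distinctness of the $P_t$ together with the explicit form of the multiset in \eqref{eqn:eigenvalues}. I do not anticipate any real obstacle here; this proposition is essentially a bookkeeping statement, and the content was already carried out for $G_2$ in Proposition \ref{prop:g2repeat}. The mild subtlety worth flagging in the write-up is the $E_6$ asymmetry just discussed: since the $27$-dimensional representation has no $\pm$ symmetry and no guaranteed eigenvalue $1$, the only mechanism for an extra repeat is $P_i=P_j$, and in particular neither $P_i=0$ nor $2P_i=0$ is singled out — this is exactly what makes case 2 a single condition rather than a trichotomy.
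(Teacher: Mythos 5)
Your proof is correct and follows essentially the same route the paper intends: the paper states Proposition \ref{prop:repeat} as a direct observation extending Proposition \ref{prop:g2repeat}, obtained by inspecting the eigenvalue multiset \eqref{eqn:eigenvalues} case by case, which is exactly your trichotomy for the self-paired groups $G_2,F_4,E_7,E_8$ and the single condition $P_i=P_j$ for $E_6$. Your explicit remark on the $E_6$ asymmetry (no appended $1$'s, no $\pm$ pairing, hence no $P_i=0$ or $2P_i=0$ clause) is a correct and worthwhile clarification of a point the paper leaves implicit.
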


To translate this into the language of matrices and kernels, we form a matrix $P$ by stacking together the distinct vectors (up to a sign) in the list 
\[\left\{\vec{v}_1\pm\vec{v}_2,\vec{v}_1\pm\vec{v}_3,\dots,\vec{v}_{u-1}\pm\vec{v}_u,\vec{v}_1,\dots,\vec{v}_u,2\vec{v}_1,\dots,2\vec{v}_u\right\},\]
where as before $\vec{v}_i$ is the row vector of $P_i$ with respect to the basis $\{k_{\delta_1},\ldots,k_{\delta_\ell} \}$.\footnote{While the repeats of eigenvalues for $t(\vec{k})\in E_6$ can be accounted for with only the vectors in the set $\left\{\vec{v}_1-\vec{v}_2,\vec{v}_1-\vec{v}_3,\dots,\vec{v}_{u-1}-\vec{v}_u\right\}$, doing so will not provide sufficient information for determining whether $t(\vec{k})$ is fixed by some $w\in W$.}

\begin{proposition}
The $\ell$-tuple $\vec{k}\in\left(\frac{1}{m}\Z/\Z\right)^\ell$ introduces (additional) repeats in \eqref{eqn:eigenvalues} iff, 
\begin{enumerate}
\item for $G=G_2,F_4,E_7,E_8$, $\vec{k}$ is in the kernel of some $k\times\ell$ submatrix of $P$, where $k\ge 1$, or, 
\item for $G=E_6$, $\vec{k}$ is in the kernel of some $k\times\ell$ submatrix of $P$, where $k\ge 1$, that contains a row vector of the form $\vec{v}_i-\vec{v}_j$, for some $i\ne j$.
\end{enumerate}
\end{proposition}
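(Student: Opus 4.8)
The plan is to reduce this directly to Proposition \ref{prop:repeat}, translating each of its eigenvalue-coincidence conditions into the statement that $\vec{k}$ lies in the kernel of some appropriate row of the matrix $P$. Recall that by construction $P$ is obtained by stacking all distinct-up-to-sign vectors from the list $\{\vec{v}_i\pm\vec{v}_j\}\cup\{\vec{v}_i\}\cup\{2\vec{v}_i\}$, and that $\vec{v}_i$ is precisely the coordinate row vector of the linear form $P_i$ in the basis $\{k_{\delta_1},\dots,k_{\delta_\ell}\}$, so that $P_i(\vec{k})=\vec{v}_i\vec{k}$ as elements of $\frac1m\Z/\Z$ (scaling a row by $-1$ does not change its kernel, which is why the sign ambiguity is harmless).

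First I would handle the case $G=G_2,F_4,E_7,E_8$. If $\vec{k}$ introduces an additional repeat, then by Proposition \ref{prop:repeat} one of the three alternatives holds: $P_i\pm P_j=0$, i.e.\ $(\vec{v}_i\pm\vec{v}_j)\vec{k}=0$; or $P_i=0$, i.e.\ $\vec{v}_i\vec{k}=0$; or $2P_i=0$, i.e.\ $(2\vec{v}_i)\vec{k}=0$. In each case the relevant vector (or its negative) is by construction one of the rows of $P$, so $\vec{k}$ lies in the kernel of the $1\times\ell$ submatrix consisting of that single row — hence in the kernel of a $k\times\ell$ submatrix with $k\ge1$. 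Conversely, if $\vec{k}$ is in the kernel of some $k\times\ell$ submatrix with $k\ge1$, then it is annihilated by at least one row of $P$, and every row of $P$ has one of the forms $\vec{v}_i\pm\vec{v}_j$, $\vec{v}_i$, or $2\vec{v}_i$; reading the corresponding equation backwards through the $P_i(\vec{k})=\vec{v}_i\vec{k}$ dictionary yields exactly one of the conditions in Proposition \ref{prop:repeat}, so an additional repeat occurs. This gives the ``iff'' for the first case.

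For $G=E_6$ the only coincidence condition from Proposition \ref{prop:repeat} is $P_i=P_j$ for some $i\ne j$, i.e.\ $(\vec{v}_i-\vec{v}_j)\vec{k}=0$, and the vector $\vec{v}_i-\vec{v}_j$ (up to sign) is a row of $P$ of the stated form, so $\vec{k}$ lies in the kernel of a submatrix containing such a row; conversely, if $\vec{k}$ is killed by a submatrix containing a row $\vec{v}_i-\vec{v}_j$, then $P_i(\vec{k})=P_j(\vec{k})$ and a repeat occurs. The mild subtlety here, and the one point that needs a word of care rather than a routine check, is the necessity of the clause ``that contains a row vector of the form $\vec{v}_i-\vec{v}_j$'': because $P$ for $E_6$ also contains rows $\vec{v}_i+\vec{v}_j$, $\vec{v}_i$, and $2\vec{v}_i$ (included, as the footnote explains, to detect fixed points later), $\vec{k}$ can lie in the kernel of such a ``spurious'' row without producing any eigenvalue coincidence, since in the $E_6$ torus the eigenvalues are $e^{2\pi iP_i}$ with no $\pm$ pairing and no forced eigenvalue $1$. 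So the restriction to rows of the form $\vec{v}_i-\vec{v}_j$ is exactly what makes the statement correct, and the proof must invoke Proposition \ref{prop:repeat}(2) to see that these are the only rows whose vanishing is equivalent to a coincidence. I expect no real obstacle beyond bookkeeping; the main thing to get right is keeping the $E_6$ case honest by tracking which rows of $P$ are ``genuine'' coincidence conditions and which are carried along only for the later fixed-point analysis.
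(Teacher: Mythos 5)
Your argument is correct and is essentially the paper's own (the paper states this proposition without proof, presenting it as a direct recasting of Proposition \ref{prop:repeat} via the dictionary $P_i(\vec{k})=\vec{v}_i\vec{k}$, which is exactly the translation you carry out). Your careful handling of the $E_6$ clause — that only the rows $\vec{v}_i-\vec{v}_j$ encode genuine coincidences, the remaining rows being present solely for the later fixed-point analysis — matches the intent of the paper's footnote on that point.
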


\noindent Once again, for convenience, we treat $\left(\frac{1}{m}\Z/\Z\right)^\ell$ as the kernel of the unique $0\times\ell$ empty submatrix of $P$.

\begin{definition}\label{def:sfunction}
Let $\Scal$ be the set of all (possibly empty) submatrices of $P$ with $\ell$ columns. Define on $\Scal$ a partial order $\preceq$ by $S_1\preceq S_2$ iff $S_2$ is a submatrix of $S_1$. 
\end{definition}

\begin{proposition} \label{prop:eigencharacterization}
For all $\vec{k}\in\left(\frac{1}{m}\Z/\Z\right)^\ell$, there exists a unique minimal element $S\in\Scal$ having $\vec{k}$ in its kernel. 
\end{proposition}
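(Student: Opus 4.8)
The plan is to mirror exactly the argument for Proposition~\ref{prop:g2eigencharacterization}. Given $\vec{k}\in\left(\frac{1}{m}\Z/\Z\right)^\ell$, consider the set $R_{\vec{k}}$ of all row vectors of $P$ that annihilate $\vec{k}$ over $\frac{1}{m}\Z/\Z$; that is, $\vec{w}\in\row(P)$ belongs to $R_{\vec{k}}$ iff $\vec{w}\cdot\vec{k}=\vec{0}$ in $\frac{1}{m}\Z/\Z$. First I would let $S$ be the submatrix of $P$ whose rows are precisely the vectors in $R_{\vec{k}}$ (this is a legitimate element of $\Scal$, possibly the empty $0\times\ell$ submatrix if $R_{\vec{k}}=\emptyset$, which by our convention has kernel all of $\left(\frac{1}{m}\Z/\Z\right)^\ell$). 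By construction $\vec{k}\in\ker S$, so $S$ is a candidate.

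Next I would check minimality with respect to $\preceq$. Recall from Definition~\ref{def:sfunction} that $S'\preceq S''$ means $S''$ is a submatrix of $S'$; so the minimal elements having $\vec{k}$ in their kernel are the ``largest'' submatrices (most rows) annihilating $\vec{k}$. If $T\in\Scal$ is any submatrix with $\vec{k}\in\ker T$, then every row vector of $T$ annihilates $\vec{k}$, hence lies in $R_{\vec{k}}$, hence is a row of $S$; therefore $T$ is a submatrix of $S$, i.e.\ $S\preceq T$. This shows $S$ is $\preceq$-minimal among the elements of $\Scal$ containing $\vec{k}$ in their kernel, and simultaneously gives uniqueness: any other minimal element $S'$ would satisfy $S\preceq S'$ by the same argument, but minimality of $S'$ forces $S'=S$.

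The only subtlety worth flagging is making sure $S$ as defined is genuinely an element of $\Scal$, i.e.\ that taking ``all rows of $P$ annihilating $\vec{k}$'' yields a submatrix of $P$ in the sense used throughout---this is immediate since $\Scal$ was defined in Definition~\ref{def:sfunction} as the set of \emph{all} (possibly empty) submatrices of $P$ with $\ell$ columns, with no restriction on the number of rows, so any subset of $\row(P)$ determines an element of $\Scal$. (One should also note that the ordering and signs of rows are irrelevant here, exactly as in the footnote accompanying the construction of $P$, since only kernels over $\frac{1}{m}\Z/\Z$ matter, and negating a row does not change whether it annihilates $\vec{k}$.) There is no real obstacle: the statement is the verbatim generalization of the rank-$2$ case, and the proof is the one-line argument already recorded in the proof of Proposition~\ref{prop:g2eigencharacterization}, namely that $S$ is the submatrix of $P$ containing all rows annihilating $\vec{k}$. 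I would therefore write the proof as a single sentence pointing to that construction, parallel to the $G_2$ case.
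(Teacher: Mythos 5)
Your proof is correct and is essentially the paper's own argument: the paper proves the $G_2$ case by taking $S$ to be the submatrix of $P$ containing all rows annihilating $\vec{k}$, and the general case is stated with the same justification implicitly carried over. Your additional checks (that this $S$ lies in $\Scal$, and the direction of the partial order) are accurate but routine.
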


For $G=G_2,F_4,E_7,E_8$, 
let $S\in\Scal$, let $i,j\in \{1,2,\ldots , u\}$, let \[H=\left\{0,\frac{1}{2},1,\dots,u\right\},\] 
and define an equivalence relation $\equiv_S$ on $H$ by the rules:
\begin{enumerate}
\item $i\equiv_S j$ if $\vec{v}_i\pm\vec{v}_j$ is (up to a sign) a row vector of $S$,
\item $i\equiv_S 0$ if $\vec{v}_i$ is (up to a sign) a row vector of $S$, and
\item $i\equiv_S \frac{1}{2}$ if $2\vec{v}_i$ is (up to a sign) a row vector of $S$ but $\vec{v}_i$ is not.
\end{enumerate}
For $G=E_6$, let $S\in\Scal$, let $i,j\in \{1,2,\ldots , u\}$, let \[H=\left\{1,\dots,u\right\},\] and define an equivalence relation $\equiv_S$ on $H$ by the rule:
\begin{enumerate}
\item $i\equiv_S j$ if $\vec{v}_i-\vec{v}_j$ is (up to a sign) a row vector of $S$.
\end{enumerate}

\begin{proposition} \label{prop:sfunction}
Let $\vec{k}\in \left(\frac{1}{m}\Z/\Z\right)^\ell$ and let $S\in\Scal$ be the minimal element having $\vec{k}$ in its kernel. Then, the number of distinct eigenvalues of $t(\vec{k})$ is given by $s(S)$ where

\begin{align}
s(S)&=\begin{cases}  \label{eqn:sfunctiong2f4e8}
2\left\lvert H/\equiv_S\right\rvert-3 & \mbox{if }\left\lvert\left[\frac{1}{2}\right]\right\rvert=1  \\
2\left\lvert H/\equiv_S\right\rvert-2 & \mbox{if }\left\lvert\left[\frac{1}{2}\right]\right\rvert>1
\end{cases},
\text{ for } G=G_2,F_4,E_8; \\ &\nonumber \\
 s(S)&=\left\lvert H/\equiv_S\right\rvert, \label{eqn:sfunctione6}
\text{ for } G=E_6; \text{ and} \\ &\nonumber \\
s(S)&=\begin{cases} \label{eqn:sfunctione7}
2\left\lvert H/\equiv_S\right\rvert-4 & \mbox{if }\left\lvert\left[0\right]\right\rvert=1\text{ and }\left\lvert\left[\frac{1}{2}\right]\right\rvert=1 \\
2\left\lvert H/\equiv_S\right\rvert-2 & \mbox{if }\left\lvert\left[0\right]\right\rvert>1\text{ and }\left\lvert\left[\frac{1}{2}\right]\right\rvert>1 \\
2\left\lvert H/\equiv_S\right\rvert-3 & \mbox{if }\left\lvert\left[0\right]\right\rvert=1\text{ and }\left\lvert\left[\frac{1}{2}\right]\right\rvert>1 
\end{cases},
\text{ for } G=E_7.
\end{align}
\end{proposition}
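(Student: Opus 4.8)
The plan is to prove Proposition~\ref{prop:sfunction} by a direct counting argument that mirrors the proof of Proposition~\ref{prop:g2sfunction}, treating each of the three cases ($G_2,F_4,E_8$; $E_6$; $E_7$) according to the structure of the eigenvalue multiset in Equation~\eqref{eqn:eigenvalues}. The starting point is Proposition~\ref{prop:eigencharacterization}: the minimal $S\in\Scal$ having $\vec{k}$ in its kernel collects exactly those rows of $P$ annihilating $\vec{k}$, so by the correspondence $\vec{v}_i\leftrightarrow P_i$ together with Proposition~\ref{prop:repeat}, the equivalence relation $\equiv_S$ precisely records which coincidences hold among the $P_i$ (and which $P_i$ vanish or equal $\tfrac12$) for that particular $\vec{k}$. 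Thus two indices $i,j$ lie in the same $\equiv_S$-class iff $e^{\pm2\pi iP_i}$ and $e^{\pm2\pi iP_j}$ contribute the same eigenvalue(s); $i\equiv_S 0$ iff $e^{\pm2\pi iP_i}=1$; and $i\equiv_S\tfrac12$ iff $e^{\pm2\pi iP_i}=-1$. The number of distinct eigenvalues is then read off by counting, class by class, the eigenvalues each class contributes.

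For $G=G_2,F_4,E_8$ the argument is verbatim the proof of Proposition~\ref{prop:g2sfunction}: each class $[i]$ with $i\in\{1,\dots,u\}$ distinct from $[0]$ and $[\tfrac12]$ contributes the conjugate pair $e^{\pm2\pi iP_i}$, giving $2(\lvert H/\!\equiv_S\rvert-2)$; the class $[0]$ contributes the eigenvalue $1$ (always present, since the torus element has a fixed $1$ on the diagonal); and $[\tfrac12]$ contributes $-1$ exactly when $\lvert[\tfrac12]\rvert>1$, i.e.\ when some genuine $2P_i=0$, $P_i\neq0$ occurs. Adding these gives $2\lvert H/\!\equiv_S\rvert-3$ or $2\lvert H/\!\equiv_S\rvert-2$ as stated. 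For $G=E_6$, where the eigenvalue multiset is $\{e^{2\pi iP_1},\dots,e^{2\pi iP_u}\}$ with no conjugate-pair or fixed-$1$ structure, only coincidences $P_i=P_j$ matter (this is why $\equiv_S$ is defined using only $\vec{v}_i-\vec{v}_j$ and why $H=\{1,\dots,u\}$), so each $\equiv_S$-class contributes exactly one distinct eigenvalue and $s(S)=\lvert H/\!\equiv_S\rvert$ immediately. For $G=E_7$, the multiset is $\{e^{\pm2\pi iP_1},\dots,e^{\pm2\pi iP_u}\}$ with \emph{no} fixed $1$; here I would split as in the $G_2$ case but track $[0]$ and $[\tfrac12]$ together: a class $[i]$ unequal to $[0],[\tfrac12]$ still gives the pair $e^{\pm2\pi iP_i}$; the class $[0]$, when nonempty, contributes the single eigenvalue $1$ but only once (not a pair, since $e^{+}=e^{-}=1$), so it ``costs'' one relative to the generic count; likewise $[\tfrac12]$, when it genuinely contains some index, contributes the single eigenvalue $-1$. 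One then checks the three listed cases: (i) $\lvert[0]\rvert=1$ and $\lvert[\tfrac12]\rvert=1$ means neither $1$ nor $-1$ is actually an eigenvalue, so both would-be classes are ``empty'' of real indices and we lose $4$; (ii) both $>1$ means both $1$ and $-1$ appear, each as a singleton, losing $2$; (iii) the mixed case loses $3$. Writing out the bookkeeping $2(\text{number of pair-classes}) + (\text{contributions of }[0],[\tfrac12])$ yields Equation~\eqref{eqn:sfunctione7}.

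The main obstacle is the $E_7$ case, specifically handling the interaction of the classes $[0]$ and $[\tfrac12]$ cleanly. Unlike $G_2,F_4,E_8$, there is no automatically-present eigenvalue $1$ to anchor the count, and the convention in $H=\{0,\tfrac12,1,\dots,u\}$ is that $0$ and $\tfrac12$ are always \emph{elements} of $H$ even when no $P_i$ actually equals $0$ or $\tfrac12$ — so $\lvert[0]\rvert=1$ (respectively $\lvert[\tfrac12]\rvert=1$) is the signal that the class is ``trivial,'' i.e.\ contains only the formal symbol $0$ (respectively $\tfrac12$) and no genuine index $i$. I would make this convention fully explicit at the outset and then the three cases of Equation~\eqref{eqn:sfunctione7} fall out by inspection. (Note the stated list of cases for $E_7$ omits the combination $\lvert[0]\rvert>1$, $\lvert[\tfrac12]\rvert=1$; I would either add the corresponding line $2\lvert H/\!\equiv_S\rvert-3$ or observe that, by the parity of $u$ and the structure of $\Phi$ for $E_7$, this combination cannot arise — this is the one point where a short genuine verification, rather than pure bookkeeping, is needed.) The remaining cases ($G_2,F_4,E_8$ and $E_6$) require no new ideas beyond Proposition~\ref{prop:g2sfunction}, so the write-up can dispatch them by reference and concentrate the detail on $E_7$.
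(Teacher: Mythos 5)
Your proposal is correct and follows essentially the same route as the paper, which gives no separate proof of Proposition~\ref{prop:sfunction} but refers the reader back to the class-by-class count in the proof of Proposition~\ref{prop:g2sfunction}; your adaptations for $E_6$ (one eigenvalue per class) and $E_7$ (no anchored eigenvalue $1$) are exactly the intended ones. Your side observation is also a genuine catch: the combination $\lvert[0]\rvert>1$, $\lvert[\tfrac{1}{2}]\rvert=1$ does occur for $E_7$ (e.g.\ $\vec{k}=\vec{0}$, or any odd $m$ with some $P_i(\vec{k})=0$, since $\tfrac{1}{2}\notin\frac{1}{m}\Z/\Z$ for odd $m$), so the stated case list is incomplete and should include a fourth line with value $2\lvert H/\equiv_S\rvert-3$, consistent with the uniform bookkeeping $2\lvert H/\equiv_S\rvert-4$ plus one for each of $[0]$, $[\tfrac{1}{2}]$ that is nontrivial.
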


\subsubsection{Number of torus elements with $s$ eigenvalues fixed by an element of $W$}\label{gfixeds}

We now derive a necessary and sufficient condition for any $t(\vec{k})$ to be fixed by some $w\in W$.

\begin{definition}\label{def:sw}
Given $w\in W$, let $\sigma_w$ be the signed permutation on $[u]=\{1,\dots,u\}$ associated to $w$ by Proposition \ref{prop:wscorrespondence}. We associate to $w$ the (possibly empty) matrix $S_w\in\Scal$, obtained by stacking together the distinct vectors (up to a sign) found via the following procedure, for $i=i_1,\dots,i_\ell$:
\begin{enumerate}
\item if $\sigma_w(i)=-i$, stack $2\vec{v}_i$;
\item if $\sigma_w(i)=\pm j$, where $j\in [u]$ and $j\ne i$, stack $\vec{v}_i\mp\vec{v}_j$. 
\end{enumerate}
\end{definition}

\begin{proposition} \label{prop:fixedpointcondition}
Let $w\in W$. Then, $t(\vec{k})$ is fixed by $w$ iff $\vec{k}$ is in the kernel of $S_w$.
\end{proposition}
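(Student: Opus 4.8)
The plan is to mirror the argument for $G_2$ given in Proposition \ref{prop:g2fixedpointcondition}, pushing it through verbatim for the four remaining exceptional groups. First I would recall from Proposition \ref{prop:wscorrespondence} that acting by $w$ on $t(\vec{k})$ produces a signed permutation $\sigma_w$ of the exponents $P_1,\dots,P_u$ in the first $u$ diagonal entries, and that there is a distinguished index set $\{i_1,\dots,i_\ell\}$ such that $t(\vec{k})$ is fixed by $w$ if and only if the $i_j$th diagonal entries of $t(\vec{k})$ and $w\cdot t(\vec{k})$ agree for all $j$. So the whole question reduces to: when does $e^{2\pi i P_{i_j}} = e^{2\pi i \sigma_w(P_{i_j})}$ hold for each $j=1,\dots,\ell$?

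The key step is to analyze this equality for a single index $i \in \{i_1,\dots,i_\ell\}$ by cases on $\sigma_w(i)$. If $\sigma_w(i)=i$, the equality is automatic and imposes no condition. If $\sigma_w(i)=-i$, the equality $e^{2\pi i P_i}=e^{-2\pi i P_i}$ holds iff $2P_i \equiv 0$, i.e. iff $\vec{k}$ lies in the kernel of the row vector $2\vec{v}_i$. If $\sigma_w(i)=\pm j$ with $j\ne i$, the equality $e^{2\pi i P_i}=e^{\pm 2\pi i P_j}$ holds iff $P_i \mp P_j \equiv 0$, i.e. iff $\vec{k}$ lies in the kernel of $\vec{v}_i \mp \vec{v}_j$. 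Collecting these row vectors over all $i \in \{i_1,\dots,i_\ell\}$ — discarding zeros and repetitions, and noting these are by construction (distinct up to sign) rows of $P$ — yields precisely the matrix $S_w$ of Definition \ref{def:sw}. Hence $t(\vec{k})$ is fixed by $w$ iff $\vec{k}$ annihilates every such row, i.e. iff $\vec{k}\in\ker S_w$ (with the empty-matrix convention handling the case where no nontrivial condition arises, e.g. $w=1$). For $E_6$ the same case analysis applies, except that the eigenvalue multiset is $\{e^{2\pi i P_1},\dots,e^{2\pi i P_u}\}$ with no inverse-pairing, so $\sigma_w$ is an honest (unsigned) permutation and only case (2) of Definition \ref{def:sw} occurs; the argument is unchanged otherwise.

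The main obstacle, such as it is, lies not in the algebra but in justifying Proposition \ref{prop:wscorrespondence} in the background: namely that fixing the $\ell$ chosen entries really does fix all $u$ of them, which in turn rests on the $P_{i_1},\dots,P_{i_\ell}$ spanning $\Z[k_{\delta_1},\dots,k_{\delta_\ell}]$ over $\Z$ together with the compatibility of $\sigma_w$ with the linear relations among the $P_i$. Since Proposition \ref{prop:wscorrespondence} is already established (computationally verified), I would simply invoke it and restrict attention to the $\ell$ distinguished indices; the remaining entries then take care of themselves. What remains is purely the bookkeeping of the three cases above, which is routine and parallels Example \ref{ex:g2fixedpoint} and Table \ref{tab:g2sw}.
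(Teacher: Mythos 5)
Your proof is correct and takes essentially the same approach as the paper: the paper gives no separate argument for Proposition \ref{prop:fixedpointcondition}, deferring to the $G_2$ discussion, where precisely your case analysis on $\sigma_w(i)$ (reduced to the $\ell$ distinguished indices via Proposition \ref{prop:wscorrespondence}) is what motivates Definition \ref{def:sw}, so the proposition holds by construction. Your additional remark that for $E_6$ the permutation is unsigned is a harmless refinement that does not change the argument.
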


As before, for each $S\in\Scal$, let
\begin{align*}
G_m(S)&=\left\{\vec{k}\in\left(\frac{1}{m}\Z/\Z\right)^\ell:\ S\vec{k}=\vec{0}\right\},\text{ and} \\
F_m(S)&=\left\{\vec{k}\in\left(\frac{1}{m}\Z/\Z\right)^\ell:\ S\vec{k}=\vec{0},\ T\vec{k}\ne\vec{0}\text{ for all }T\prec S\right\},
\end{align*}
and define, accordingly, \[g_m(S)=\left\lvert G_m(S)\right\rvert,\text{ and }f_m(S)=\left\lvert F_m(S)\right\rvert.\]
It follows from Proposition \ref{prop:eigencharacterization} that we have the partition 
\begin{equation} \label{eqn:partition}
E(T,m,s)=\bigcup_{\substack{S\in \Scal \\ s(S)=s}}\left\{t(\vec{k}):\ \vec{k}\in F_m(S)\right\}.
\end{equation}

\begin{theorem} \label{thm:allornothing}
If $F_m(S)\ne\emptyset$, then each $w\in W$ fixes either all or none of the $t(\vec{k})$ for $\vec{k}\in F_m(S)$. In particular, $w$ fixes all such elements iff $S\preceq S_w$.
\end{theorem}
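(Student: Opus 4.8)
The plan is to mirror, essentially verbatim, the proof of the $G_2$ case given in Theorem \ref{prop:g2allornothing}, since the entire machinery generalizes without change once Propositions \ref{prop:eigencharacterization} and \ref{prop:fixedpointcondition} are in hand. The statement is a two-part claim: first, that fixedness under a given $w$ is constant on each nonempty fiber $F_m(S)$; second, that the common value is ``all'' precisely when $S\preceq S_w$.

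First I would fix $w\in W$ and assume $F_m(S)\neq\emptyset$. The key observation is that, by Proposition \ref{prop:eigencharacterization}, every $\vec{k}\in F_m(S)$ has $S$ as its \emph{unique} minimal element of $\Scal$ containing $\vec{k}$ in its kernel; indeed this is exactly what it means for $\vec{k}$ to lie in $F_m(S)$ (it is in $\ker S$ but in $\ker T$ for no $T\prec S$). Now suppose some particular $\vec{k}_0\in F_m(S)$ is fixed by $w$. By Proposition \ref{prop:fixedpointcondition}, $\vec{k}_0\in\ker S_w$, so $S_w$ is one of the submatrices of $P$ annihilating $\vec{k}_0$; by minimality of $S$ this forces $S\preceq S_w$, i.e.\ $S_w$ is a submatrix of $S$, hence $\ker S\subseteq\ker S_w$ (over $\frac{1}{m}\Z/\Z$). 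But then \emph{every} $\vec{k}\in F_m(S)\subseteq\ker S$ lies in $\ker S_w$, so by Proposition \ref{prop:fixedpointcondition} again, $w$ fixes $t(\vec{k})$ for all such $\vec{k}$. This simultaneously proves the dichotomy (if one element is fixed, all are) and the forward direction of the ``iff'' (fixing some/all $\Rightarrow S\preceq S_w$). Conversely, if $S\preceq S_w$ then $\ker S\subseteq\ker S_w$, so every $\vec{k}\in F_m(S)\subseteq\ker S$ has $t(\vec{k})$ fixed by $w$ — giving ``$S\preceq S_w$ $\Rightarrow$ $w$ fixes all''.

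I would then note that nothing in the above used any special feature of $G_2$: the partial order $\preceq$ on $\Scal$, the uniqueness of the minimal annihilating submatrix, and the characterization of fixed points via $S_w$ are all established in full generality in Definitions \ref{def:sw} and \ref{def:sfunction} and Propositions \ref{prop:eigencharacterization} and \ref{prop:fixedpointcondition}. Only a short remark is needed that $S\preceq S_w$ (meaning $S_w$ is a submatrix of $S$) indeed yields the kernel inclusion $\ker S\subseteq\ker S_w$ over $\frac{1}{m}\Z/\Z$, which is immediate since every row of $S_w$ is a row of $S$.

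There is essentially no obstacle here — the ``hard part,'' such as it is, is purely bookkeeping about the direction of the partial order (recall $S_1\preceq S_2$ means $S_2$ is a submatrix of $S_1$, so ``smaller'' matrices have \emph{more} rows and \emph{smaller} kernels), and making sure the roles of $S$ (the minimal element pinning down $\vec{k}$'s eigenvalue-collision pattern) and $S_w$ (encoding $w$'s fixed-point locus) are not interchanged. Once the direction conventions are handled correctly, the argument is a one-paragraph transcription of the $G_2$ proof.
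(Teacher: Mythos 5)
Your proposal is correct and follows essentially the same route as the paper, which proves the $G_2$ case (Theorem \ref{prop:g2allornothing}) by exactly this argument — using Proposition \ref{prop:fixedpointcondition} to place $\vec{k}_0$ in $\ker S_w$, invoking the unique minimal annihilating submatrix from Proposition \ref{prop:eigencharacterization} to get $S\preceq S_w$, and then deducing $\ker S\subseteq\ker S_w$ — and states that the general case is the verbatim extension. Your explicit treatment of the converse direction and the remark on the orientation of $\preceq$ are fine and consistent with the paper's conventions.
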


\begin{proposition} \label{prop:optimization}
Let $S\in\Scal$. If there exists a row vector $\vec{v}$ of $P$ that (a) is in $\Lambda_R(S)$ but (b) is not a row vector of $S$, then $f_m(S)=0$.
\end{proposition}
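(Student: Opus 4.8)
The plan is to mirror, almost verbatim, the argument used for Proposition \ref{prop:g2optimization}, now phrased in terms of the row lattice $\Lambda_R(S)$ instead of the ad hoc phrase ``integer combination of the row vectors of $S$''; these two notions coincide by the definition of $\Lambda_R$. First I would fix $S\in\Scal$ together with a row vector $\vec v$ of $P$ satisfying (a) $\vec v\in\Lambda_R(S)$ and (b) $\vec v$ is not a row of $S$, and I would form the matrix $S'$ by stacking the rows of $S$ together with $\vec v$, exactly as in Definition \ref{def:sw}. Since $S'$ is a submatrix of $P$ with $\ell$ columns, we have $S'\in\Scal$; and since $S$ is a \emph{proper} submatrix of $S'$ by (b), the partial order gives $S'\prec S$ (recall that more rows means lower, i.e.\ more restrictive).

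Next I would verify that $\ker S=\ker S'$ over $\frac1m\Z/\Z$. The inclusion $\ker S'\subseteq\ker S$ is immediate because $S$ is a submatrix of $S'$. For the reverse inclusion, by (a) we may write $\vec v=\vec w^{\,T}S$ for some $\vec w\in\Z^{k}$, where $k$ is the number of rows of $S$; then for any $\vec k$ with $S\vec k=\vec 0$ we get $\vec v\,\vec k=\vec w^{\,T}S\vec k=0$, so every row of $S'$ annihilates $\vec k$, i.e.\ $S'\vec k=\vec 0$. Hence $G_m(S)=G_m(S')$.

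Finally I would conclude the proof: given any $\vec k\in G_m(S)$, the previous step shows $\vec k\in G_m(S')$ with $S'\prec S$, so the defining condition of $F_m(S)$ (namely $T\vec k\ne\vec 0$ for all $T\prec S$) fails for $T=S'$; thus $\vec k\notin F_m(S)$. As $\vec k\in G_m(S)$ was arbitrary, $F_m(S)=\emptyset$ and therefore $f_m(S)=0$. I do not expect any real obstacle here: the content is a one-line linear-algebra observation, and the only point requiring care is the direction of the partial order, so that adjoining the row $\vec v$ genuinely produces a matrix $S'\prec S$ rather than one above $S$.
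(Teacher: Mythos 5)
Your argument is correct and is essentially the paper's own proof of Proposition \ref{prop:g2optimization} (which the paper implicitly carries over to the general case): adjoin $\vec v$ to $S$ to get $S'\prec S$ with $G_m(S)=G_m(S')$, so the minimality condition defining $F_m(S)$ fails for every element of $G_m(S)$. The only blemish is the citation of Definition \ref{def:sw}, which defines $S_w$ and is not the relevant construction here; otherwise nothing to add.
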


As before, we let $\Mcal\subseteq\Scal$ be the partially ordered subset of $\Scal$ consisting of the matrices $S$ such that every row vector of $P$ that is in $\Lambda_R(S)$ is a row vector of $S$. It follows from Proposition \ref{prop:optimization} that if $F_m(S)\neq0$ then $S\in \Mcal$.
Furthermore, Equations \eqref{gmsum}, \eqref{eqn:g2fgrelation}, and \eqref{eqn:g2msfixedpoint} hold here as well. The following corollary therefore holds.

\begin{corollary} \label{cor:ngms}We have 
\begin{equation} \label{eqn:ngms}
N(G,m,s)=\frac{1}{\lvert W\rvert}\sum_{c\in\Cl(W)}\left\lvert c\right\rvert\left\lvert\Fix_s(w_c)\right\rvert,
\end{equation}
where 
\begin{equation}
\left\lvert\Fix_s(w_c)\right\rvert=\sum_{\substack{S\in\Mcal \\ s(S)=s \\ S\preceq S_{w_c}}}f_m(S),
\end{equation} and $f_m(S)$ can be computed using Equation \eqref{eqn:g2fgrelation}.
\end{corollary}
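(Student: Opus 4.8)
The plan is to assemble the corollary directly from the pieces already established for the general case, exactly mirroring the derivation of Corollary \ref{cor:ng2ms} in the $G_2$ setting. The starting point is Theorem \ref{thm:burnsidereformulated}, which already gives $N(G,m,s)=\frac{1}{|W|}\sum_{c\in\Cl(W)}|c|\,|\Fix_s(w_c)|$; so the only real content is to show that $|\Fix_s(w_c)|=\sum_{S\in\Mcal,\ s(S)=s,\ S\preceq S_{w_c}}f_m(S)$ and that $f_m(S)$ is computable via Equation \eqref{eqn:g2fgrelation}. First I would invoke the partition \eqref{eqn:partition} of $E(T,m,s)$ indexed by the matrices $S\in\Scal$ with $s(S)=s$, which is justified by Proposition \ref{prop:eigencharacterization} (each $\vec k$ has a unique minimal $S\in\Scal$ in whose kernel it lies) together with Proposition \ref{prop:sfunction} (the number of distinct eigenvalues of $t(\vec k)$ depends only on that minimal $S$, via the formula $s(S)$).

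Next I would compute the fixed-point count by distributing $w_c$ over the blocks of the partition. By Theorem \ref{thm:allornothing}, for each $S$ with $F_m(S)\neq\emptyset$, the element $w_c$ fixes either all or none of the $t(\vec k)$ with $\vec k\in F_m(S)$, and it fixes all of them precisely when $S\preceq S_{w_c}$; hence the block contributes $f_m(S)$ to $|\Fix_s(w_c)|$ if $s(S)=s$ and $S\preceq S_{w_c}$, and $0$ otherwise. Summing over blocks gives $|\Fix_s(w_c)|=\sum_{S\in\Scal,\ s(S)=s,\ S\preceq S_{w_c}}f_m(S)$. Then I would restrict the index set from $\Scal$ to $\Mcal$: by Proposition \ref{prop:optimization}, any $S\notin\Mcal$ has $f_m(S)=0$, so those terms may be dropped without changing the sum, yielding the stated expression for $|\Fix_s(w_c)|$.

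Finally I would note that $f_m(S)$ is effectively computable: by definition $g_m(S)=\sum_{T\preceq S}f_m(T)=\sum_{T\preceq S,\ T\in\Mcal}f_m(T)$ (the excerpt states that \eqref{gmsum} holds in general), and $g_m(S)$ is just the size of $\ker S$ over $\frac1m\Z/\Z$, given explicitly by Proposition \ref{prop:countkernel} in terms of the elementary divisors of $S$ (Theorem \ref{thm:snf}); applying Möbius inversion on the poset $\Mcal$ then produces Equation \eqref{eqn:g2fgrelation}, $f_m(S)=\sum_{T\preceq S,\ T\in\Mcal}\mu(T,S)\,g_m(T)$. Substituting the formula for $|\Fix_s(w_c)|$ back into \eqref{ngmsformula} gives \eqref{eqn:ngms}, completing the proof.

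I do not anticipate a serious obstacle here, since every ingredient has been set up in the preceding general-case subsections; the proof is essentially a bookkeeping assembly. The one point deserving a sentence of care is the claim that \eqref{gmsum}, \eqref{eqn:g2fgrelation}, and \eqref{eqn:g2msfixedpoint} ``hold here as well'': this hinges on $\preceq$ making $\Mcal$ a locally finite poset with a minimum element (the empty submatrix, whose kernel is all of $(\frac1m\Z/\Z)^\ell$), so that the Möbius function of Equation \eqref{eqn:mobiusfunction} is well-defined and the inversion is valid — a fact that is immediate because $\Scal$, hence $\Mcal$, is finite for each exceptional $G$. So in the write-up I would simply cite the $G_2$ arguments verbatim, with the dimension $2$ replaced by $\ell$ and $\{1,2,3\}$ by $[u]$, and flag that every lemma used (Propositions \ref{prop:eigencharacterization}, \ref{prop:sfunction}, \ref{prop:fixedpointcondition}, \ref{prop:optimization}, Theorem \ref{thm:allornothing}) has already been stated in the requisite generality.
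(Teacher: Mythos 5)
Your proposal is correct and follows essentially the same route as the paper, which likewise obtains Corollary~\ref{cor:ngms} by combining Theorem~\ref{thm:burnsidereformulated} with the partition \eqref{eqn:partition}, the all-or-nothing property of Theorem~\ref{thm:allornothing}, the restriction to $\Mcal$ via Proposition~\ref{prop:optimization}, and M\"obius inversion exactly as in the $G_2$ case. The paper simply asserts that Equations \eqref{gmsum}, \eqref{eqn:g2fgrelation}, and \eqref{eqn:g2msfixedpoint} carry over, so your explicit bookkeeping (including the remark that finiteness of $\Mcal$ validates the inversion) is a faithful, slightly more detailed rendering of the same argument.
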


A summary of the procedure for computing $\lvert\Fix_s(w_c)\rvert$ explicitly can be found in Algorithm \ref{alg:fixedpoints} in the appendix.

\subsubsection{Completing the computation of $N(F_4,m,s)$}\label{subsubsec:ngmsf4}

In the case of $G=F_4$, the following analogue of Fact \ref{fact:g2fact} applies. 

\begin{fact} \label{fact:f4fact}
If $G=F_4$, then for all $S\in\Scal$, there exists a $k\times 4$ submatrix $S'$, where $0\le k\le 4$, such that $\Lambda_R(S)=\Lambda_R(S')$.
\end{fact}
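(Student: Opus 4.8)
The plan is to reduce the claim to a statement purely about the row lattice $\Lambda_R(S)$ and show that every such lattice is generated by at most four rows of $P$, exactly as in Fact \ref{fact:g2fact} (where the bound was two). First I would observe that since $P$ has $\ell = 4$ columns, for any $S \in \Scal$ the lattice $\Lambda_R(S)$ is a sublattice of $\Z^4$, hence free of rank $k \le 4$. So it suffices to produce, for each $S$, a submatrix $S'$ of $P$ with at most four rows such that $\Lambda_R(S') = \Lambda_R(S)$; note $S'$ need not be a ``selection from $S$'' — it may use \emph{any} rows of $P$.

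The key steps, in order: (1) Enumerate the rows of $P$ for $F_4$ explicitly from the torus description in Section \ref{subsec:ngm} (the $P_1,\dots,P_{12}$ are the given integer combinations of $k_{\delta_1},\dots,k_{\delta_4}$, and the rows of $P$ are the distinct-up-to-sign vectors among $\vec v_i \pm \vec v_j$, $\vec v_i$, $2\vec v_i$). (2) For each $S \in \Scal$, note that $S$ is determined by its set of rows $R \subseteq \row(P)$, and $\Lambda_R(S) = \Lambda_R(R)$ depends only on $R$; since the number of subsets of $\row(P)$ is finite, only finitely many distinct row lattices arise. (3) For each such lattice $L$, run the procedure of Algorithm \ref{alg:findp}: compute a basis of $L$ via Hermite or Smith normal form, then greedily test subsets of $\row(P)$ of size $\le 4$ to find one whose $\Z$-span equals $L$; the correctness of this search is what establishes the bound. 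In practice this is a finite computation over the $12$-element index set for $F_4$, completely analogous to the $G_2$ case.

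The main obstacle — and the reason this is stated as a ``Fact'' rather than proved by a slick lattice-theoretic argument — is that the bound ``$\le \ell$ rows of $P$ suffice'' is \emph{not} automatic from rank considerations alone. A priori, a rank-$k$ sublattice of $\Z^4$ generated by many rows of $P$ might fail to be generated by any $k$ of those rows: one needs the \emph{specific} arithmetic of the vectors $\vec v_i \pm \vec v_j$, $2\vec v_i$ coming from the $F_4$ root/torus data to cooperate. Thus the honest proof is a verification (carried out by Algorithm \ref{alg:findp}) that for every row lattice $L$ realized by some $S\in\Scal$, there is an $S'$ with $\le 4$ rows and $\Lambda_R(S') = L$; I would present this as ``computationally verified'' in the same spirit as Proposition \ref{prop:wscorrespondence}, and point the reader to Algorithm \ref{alg:findp} in the appendix for the explicit check. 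The payoff, exactly as in Proposition \ref{prop:g2mmatrices}, is that $\Mcal$ can then be described as the matrices in $\Scal$ whose row set is $\Lambda_R(S')\cap\row(P)$ for some $S'$ with at most four rows, making the enumeration of $\Mcal$ tractable.
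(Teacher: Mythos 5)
Your overall framing is right: the paper also treats this as a finite computational verification rather than a conceptual lattice-theoretic proof, and your observation that the rank bound $\Lambda_R(S)\subseteq\Z^4$ alone does not suffice is exactly the point. But there is a genuine gap in how you propose to carry out the verification. Your steps (2)--(3) require enumerating \emph{all} row lattices $\Lambda_R(S)$ arising from subsets of $\row(P)$ and then, for each such lattice, searching for a generating submatrix with at most four rows. For $F_4$ the matrix $P$ has on the order of a hundred rows (the distinct-up-to-sign vectors among $\vec v_i\pm\vec v_j$, $\vec v_i$, $2\vec v_i$ for $1\le i<j\le 12$ --- not a ``$12$-element index set''), so enumerating all subsets of $\row(P)$ to collect the lattices is exponentially expensive and infeasible; this is precisely the obstruction the paper cites when explaining why the method does not extend to $E_6$. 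Restricting the enumeration to small subsets, on the other hand, proves nothing about submatrices with many rows. So as written, your ``finite computation'' is either infeasible or logically incomplete.

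The missing ingredient is the stabilization proposition the paper states immediately after the Fact: if every row lattice generated by a submatrix of $P$ with at most $r+1$ rows is already generated by a submatrix of $P$ with at most $r$ rows, then the same holds for submatrices with arbitrarily many rows (given a larger submatrix, replace $r+1$ of its rows by an equivalent $\le r$-row submatrix of $P$ and induct on the number of rows). This reduces the whole verification to comparing the sets of row-style Hermite normal forms of submatrices with at most $4$ rows and with at most $5$ rows --- a polynomial-size computation --- and this comparison, not a per-lattice greedy search, is what Algorithm \ref{alg:findp} actually performs. With that lemma your argument closes and matches the paper's; without it, the claimed verification cannot be completed.
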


As a side note, this fact is based on the following observation.

\begin{proposition}
Let $P\in\M_{n\times l}(\Z)$. If every row lattice generated by a submatrix of $P$ with $l$ columns and no more than $r+1$ rows can be generated by a submatrix with $l$ columns and no more than $r$ rows, then every lattice generated by a submatrix of $P$ with $r$ rows can be generated by a submatrix with $l$ columns and no more than $r$ rows.
\end{proposition}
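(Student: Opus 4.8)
The plan is to prove this by induction on the number of rows of the submatrix under consideration, with the hypothesis of the proposition itself serving as the inductive step for the passage from $r+1$ rows down to $r$ rows. I will write $\row(P)$ for the set of rows of $P$ (up to sign, as elsewhere in the paper) and use throughout that the row lattice $\Lambda_R(S)$ of a submatrix $S$ of $P$ depends only on the set of rows of $S$, so that adjoining to $S$ a row that already occurs in it (up to sign) changes neither the corresponding element of $\Scal$ nor its row lattice. The base cases will be immediate: a submatrix with at most $r$ rows generates its own row lattice and already has at most $r$ rows, and a submatrix with exactly $r+1$ rows is covered verbatim by the hypothesis.

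For the inductive step I would take a submatrix $S$ of $P$ with $k\ge r+2$ rows, assume the statement proved for all submatrices of $P$ with strictly fewer than $k$ rows, delete one row $\vec v$ of $S$ to obtain a submatrix $S_0$ with $k-1$ rows, and note that $\Lambda_R(S)=\Lambda_R(S_0)+\Z\vec v$, since the row lattice of a vertically stacked matrix is the sum of the row lattices of its blocks. Applying the inductive hypothesis to $S_0$ yields a submatrix $S_1$ of $P$ with at most $r$ rows and $\Lambda_R(S_1)=\Lambda_R(S_0)$. Re-adjoining $\vec v$ to $S_1$ (and discarding it if it already occurs, up to sign, among the rows of $S_1$) produces a submatrix $\widehat S$ of $P$ — still a genuine selection of rows of $P$ — with at most $r+1$ rows and $\Lambda_R(\widehat S)=\Lambda_R(S_1)+\Z\vec v=\Lambda_R(S_0)+\Z\vec v=\Lambda_R(S)$. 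The hypothesis of the proposition, applied now to $\widehat S$, then produces a submatrix of $P$ with at most $r$ rows and the same row lattice as $\widehat S$, hence as $S$. This closes the induction, and thereby covers every submatrix of $P$, in particular every one with $r$ or more rows.

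I do not expect a genuine obstacle here; the one point that needs care is purely organizational, namely keeping every matrix that appears in the argument a bona fide submatrix of $P$ (a choice of rows of $P$, with all $l$ columns) rather than an arbitrary integer matrix with the prescribed row lattice. This is precisely why the argument must proceed in two stages — first shrinking $S_0$ to at most $r$ rows via the inductive hypothesis, and only then re-adjoining the single row $\vec v$ — rather than appending $\vec v$ directly to the possibly large $S_0$: the shrink-then-adjoin route is exactly what lands one inside the ``at most $r+1$ rows'' regime where the proposition's hypothesis is available. I would also state explicitly, as above, that $\Lambda_R$ is insensitive to repeated or sign-flipped rows, so that the deduplication step introduces no ambiguity.
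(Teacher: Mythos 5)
Your argument is correct, and in fact the paper states this proposition without any proof at all, so there is nothing to compare it against line by line; your write-up supplies the missing justification. The structure is the natural one: strong induction on the number of rows $k$ of the submatrix, with the $k\le r$ cases trivial, the $k=r+1$ case being exactly the hypothesis, and the step $k\ge r+2$ handled by peeling off one row $\vec v$, shrinking the remainder to at most $r$ rows by the inductive hypothesis, re-adjoining $\vec v$ to land in the $\le r+1$ regime, and invoking the hypothesis once more; the identity $\Lambda_R(S)=\Lambda_R(S_0)+\Z\vec v$ and the fact that $\Lambda_R$ sees only the set of rows up to sign are exactly the two observations needed to make each of these moves lattice-preserving while keeping every intermediate object a genuine row-selection from $P$. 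One remark worth making explicit: as printed, the proposition's conclusion (``every lattice generated by a submatrix of $P$ with $r$ rows can be generated by a submatrix with $l$ columns and no more than $r$ rows'') is vacuously true, since such a submatrix generates its own row lattice; what is actually needed for Fact \ref{fact:f4fact} and Algorithm \ref{alg:findp} is the statement for submatrices with \emph{arbitrarily many} rows, and that is precisely the stronger statement your induction establishes, so your proof proves the intended claim rather than the literal (trivial) one.
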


\noindent Recall that the row-style Hermite normal form (RHNF) allows us to check whether two matrices generate the same row lattice. Hence, we can use the brute-force method described in Algorithm \ref{alg:findp} in the appendix to find the minimum $r$ that will enable us to generate all the matrices in $\Mcal$.

\begin{proposition} \label{prop:f4mmatrices} For $G=F_4$, the set
$\Mcal$ consists of the matrices in $\Scal$ having the set of row vectors $\Lambda_R(S)\cap\row(P)$, where $S\in\Scal$ has no more than 4 rows. 
\end{proposition}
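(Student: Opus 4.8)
The plan is to mirror the proof structure established for $G_2$ in Proposition~\ref{prop:g2mmatrices}, substituting Fact~\ref{fact:f4fact} for Fact~\ref{fact:g2fact} at the key juncture. Recall that $\Mcal$ is, by definition, the set of $S\in\Scal$ such that every row vector of $P$ lying in $\Lambda_R(S)$ is itself a row vector of $S$. The first observation, which holds verbatim as in the $G_2$ case, is that $\Mcal$ is exactly the set of submatrices of $P$ whose row-vector set has the form $\Lambda_R(S)\cap\row(P)$ for some $S\in\Scal$: the forward inclusion is immediate from the definition of $\Mcal$, and conversely, if a submatrix $S'$ of $P$ has row-vector set $\Lambda_R(S)\cap\row(P)$, then any row vector of $P$ in $\Lambda_R(S')\subseteq\Lambda_R(S)$ is automatically in $\Lambda_R(S)\cap\row(P)$, hence a row of $S'$, so $S'\in\Mcal$. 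Thus the content of the proposition is that in the description $\{\Lambda_R(S)\cap\row(P):S\in\Scal\}$ one may restrict $S$ to run over submatrices with at most $4$ rows.

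The second and final step is to invoke Fact~\ref{fact:f4fact}: for $G=F_4$, every $S\in\Scal$ admits a $k\times 4$ submatrix $S'$ of $P$ with $0\le k\le 4$ and $\Lambda_R(S)=\Lambda_R(S')$. Since the set $\Lambda_R(S)\cap\row(P)$ depends on $S$ only through the lattice $\Lambda_R(S)$, replacing $S$ by $S'$ changes nothing: $\Lambda_R(S)\cap\row(P)=\Lambda_R(S')\cap\row(P)$. Hence every element of $\Mcal$, being of the form (submatrix of $P$ with row set) $\Lambda_R(S)\cap\row(P)$ for some $S\in\Scal$, is equally of that form for the $\le 4$-row submatrix $S'$. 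Conversely any submatrix $S'$ with $\le 4$ rows is in particular an element of $\Scal$, so the two descriptions coincide. This completes the proof.

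Strictly speaking there is nothing left to grind: the proposition is a formal consequence of the definition of $\Mcal$ together with Fact~\ref{fact:f4fact}, so the ``main obstacle'' is entirely pushed into establishing that fact, which the paper has already done (via the preceding proposition on row lattices and the RHNF/brute-force verification of Algorithm~\ref{alg:findp}). The only point requiring a little care in writing is to state cleanly the bijective correspondence between elements of $\Mcal$ and sets of the form $\Lambda_R(S)\cap\row(P)$, so that the lattice-equality from Fact~\ref{fact:f4fact} can be transported through it without ambiguity; once that is phrased correctly the argument is a one-line substitution.
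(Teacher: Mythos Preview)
Your proposal is correct and follows exactly the approach the paper intends: the paper does not give a separate proof of Proposition~\ref{prop:f4mmatrices} but clearly means it to be deduced from Fact~\ref{fact:f4fact} by the same two-step argument spelled out just before Proposition~\ref{prop:g2mmatrices} (first identify $\Mcal$ with the matrices having row set $\Lambda_R(S)\cap\row(P)$ for some $S\in\Scal$, then use the Fact to restrict to $S$ with at most $\ell$ rows). Your write-up makes both steps explicit and handles the one small subtlety (that $\Lambda_R(S')\subseteq\Lambda_R(S)$ for the converse inclusion) cleanly.
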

Using Proposition \ref{prop:f4mmatrices}, we find that $\lvert\Mcal \rvert=22075$ for $F_4$. The full table for $N(F_4,m,s)$ can then be computed using Equation \eqref{eqn:ngms} with a computer, which turns out to be given in terms of $m\bmod 12252240$.
Here, the period in $m$ has the factorization $2^{4}\cdot 3^{2}\cdot 5\cdot 7\cdot 11\cdot 13\cdot 17$, the size of which reflects the rich variety of the elementary divisors of the matrices in $\Mcal$; cf. Corollary \ref{cor:ngms}, Equation \eqref{eqn:gm-fm-def}, and Proposition \ref{prop:countkernel}.
We report the columns (i.e., the values of $s$) with small periods in Table \ref{tab:nf4ms}.
The full table is available on QH's personal website\footnote{\url{https://sites.google.com/view/qidonghe/}}.

\subsubsection{Regarding the computation of $N(G,m,s)$ for $G=E_6, E_7, E_8$}\label{subsubsec:ngmse678}

Due to a lack of computational power, we are unable to complete the computation of $N(G,m,s)$ using Corollary \ref{cor:ngms} for the remaining exceptional Lie groups. 
Let us consider $E_{6}$, the smallest of the remaining groups.
Its $P$ matrix is easy to generate, which turns out to have dimensions $441\times 6$.
Problems begin to arise as soon as we try to carry out the next step, namely to generate $\Mcal$.
The naive approach would be to inspect all submatrices of $P$ with $6$ columns, which is out of the question since it is exponentially expensive in the number of rows of $P$.
The next best thing, it seems, is to try to extend Facts \ref{fact:g2fact} and \ref{fact:f4fact} to $E_{6}$ using Algorithm \ref{alg:findp}, which might allow us to cut down the time complexity to polynomial.
Unfortunately, even this is beyond our reach, for even in the best case scenario, it would require us to inspect all $7\times 6$ submatrices of $P$, of which there are $\binom{441}{7}\approx 6\times 10^{14}$ and still too many for us to handle.
Finding an efficient way to generate and manipulate the elements of $\Mcal$ in the spirit of Corollary \ref{cor:ngms} is thus an open problem, which we do not consider in this paper.

\begin{sidewaystable}[]
\begin{tabular}{|c|cc|cc|cc|cc|cc|}
\hline
\multicolumn{11}{|c|}{$1152\cdot N(F_{4},m,s)$} \\ \hline
$s=1$ & $s=2$ & $m\bmod 2$ & $s=3$ & $m\bmod 6$ & $s=4$ & $m\bmod 4$ & $s=6$ & $m\bmod 12$ & $s=8$ & $m\bmod 24$  \\ \hline
\multirow{24}{*}{$1152$} & \multirow{12}{*}{$2304$} & \multirow{12}{*}{$0$} & \multirow{4}{*}{$576m+1152$} & \multirow{4}{*}{$0$} & \multirow{6}{*}{$4608$} & \multirow{6}{*}{$0$} & \multirow{2}{*}{$864m+2304$} & \multirow{2}{*}{$0$} & $864m+1152$ & $0$ \\
 &  &  &  &  &  &  &  &  & $0$ & $1$ \\
 &  &  &  &  &  &  & \multirow{2}{*}{$0$} & \multirow{2}{*}{$1$} & $864m-1728$ & $2$ \\
 &  &  &  &  &  &  &  &  & $0$ & $3$ \\
 &  &  & \multirow{4}{*}{$576m-576$} & \multirow{4}{*}{$1$} &  &  & \multirow{2}{*}{$864m-1728$} & \multirow{2}{*}{$2$} & $864m-3456$ & $4$ \\
 &  &  &  &  &  &  &  &  & $0$ & $5$ \\
 &  &  &  &  & \multirow{6}{*}{$0$} & \multirow{6}{*}{$1$} & \multirow{2}{*}{$0$} & \multirow{2}{*}{$3$} & $864m-5184$ & $6$ \\
 &  &  &  &  &  &  &  &  & $0$ & $7$ \\
 &  &  & \multirow{4}{*}{$576m-1152$} & \multirow{4}{*}{$2$} &  &  & \multirow{2}{*}{$864m-3456$} & \multirow{2}{*}{$4$} & $864m+4608$ & $8$ \\
 &  &  &  &  &  &  &  &  & $0$ & $9$ \\
 &  &  &  &  &  &  & \multirow{2}{*}{$0$} & \multirow{2}{*}{$5$} & $864m-1728$ & $10$ \\
 &  &  &  &  &  &  &  &  & $0$ & $11$ \\
 & \multirow{12}{*}{$0$} & \multirow{12}{*}{$1$} & \multirow{4}{*}{$576m+1728$} & \multirow{4}{*}{$3$} & \multirow{6}{*}{$0$} & \multirow{6}{*}{$2$} & \multirow{2}{*}{$864m+4032$} & \multirow{2}{*}{$6$} & $864m-6912$ & $12$ \\
 &  &  &  &  &  &  &  &  & $0$ & $13$ \\
 &  &  &  &  &  &  & \multirow{2}{*}{$0$} & \multirow{2}{*}{$7$} & $864m-1728$ & $14$ \\
 &  &  &  &  &  &  &  &  & $0$ & $15$ \\
 &  &  & \multirow{4}{*}{$576m-1152$} & \multirow{4}{*}{$4$} &  &  & \multirow{2}{*}{$864m-3456$} & \multirow{2}{*}{$8$} & $864m+4608$ & $16$ \\
 &  &  &  &  &  &  &  &  & $0$ & $17$ \\
 &  &  &  &  & \multirow{6}{*}{$0$} & \multirow{6}{*}{$3$} & \multirow{2}{*}{$0$} & \multirow{2}{*}{$9$} & $864m-5184$ & $18$ \\
 &  &  &  &  &  &  &  &  & $0$ & $19$ \\
 &  &  & \multirow{4}{*}{$576m-576$} & \multirow{4}{*}{$5$} &  &  & \multirow{2}{*}{$864m-1728$} & \multirow{2}{*}{$10$} & $864m-3456$ & $20$ \\
 &  &  &  &  &  &  &  &  & $0$ & $21$ \\
 &  &  &  &  &  &  & \multirow{2}{*}{$0$} & \multirow{2}{*}{$11$} & $864m-1728$ & $22$ \\
 &  &  &  &  &  &  &  &  & $0$ & $23$ \\
                   \hline
\end{tabular}
\caption{A part of the full table for $N(F_{4},m,s)$.}
\label{tab:nf4ms}
\end{sidewaystable}

\appendix
\section{Algorithms}
This section contains three pseudo-codes that summarize our methods. 
\begin{algorithm}
\begin{algorithmic}[1]
\REQUIRE a conjugacy class representative $w_c$
\STATE compute its matrix representation
\STATE compute $w_c\cdot t(\vec{k})=w_ct(\vec{k})w_c^{-1}$
\STATE by equating the $i_j$th entries of $w_c\cdot t(\vec{k})$ and $t(\vec{k})$ for each $j=1,\dots,\ell$, write down a homogeneous system of $\ell$ linear equations in $\ell$ variables
\STATE compute the size of the kernel using Proposition \ref{prop:countkernel}, which will be $\lvert\Fix(w_c)\rvert$
\RETURN $\lvert\Fix(w_c)\rvert$
\end{algorithmic}
\caption{An algorithm for computing the number of fixed points in $E(T,m)$ under the action of a conjugacy class representative $w_c$.\label{alg:fixedpoint}}
\end{algorithm}
\FloatBarrier

\begin{algorithm}
\begin{algorithmic}[1]
\REQUIRE a conjugacy class representative $w_c$ 
\STATE compute its matrix representation 
\STATE compute $w_c\cdot t(\vec{k})=w_ct(\vec{k})w_c^{-1}$
\STATE determine $\sigma_{w_c}$, the signed permutation associated to $w_c$
\STATE find $S_{w_c}$ (Definition \ref{def:sw})
\STATE using Equations \ref{eqn:sfunctiong2f4e8}, \ref{eqn:sfunctione6}, or \ref{eqn:sfunctione7}, find all $S$ such that $S\in\Mcal$, $S\preceq S_{w_c}$, and $s(S)=s$
\STATE for all such $S$'s, use Proposition \ref{prop:countkernel} to determine $g_m(T)$ for $T\preceq S$, $T\in \Mcal$ and use Equation \ref{eqn:g2fgrelation}  to compute $f_m(S)$ 
\STATE add the results for all the $f_m(S)$, which will be $\lvert\Fix_s(w_c)\rvert$
\RETURN $\lvert\Fix_s(w_c)\rvert$
\end{algorithmic}
\caption{An algorithm for computing the number of fixed points in $E(T,m,s)$ under the action of a conjugacy class representative $w_c$.\label{alg:fixedpoints}}
\end{algorithm}

\begin{algorithm}
\begin{algorithmic}[1]
\REQUIRE matrix $P$ of dimensions $n\times l$
\FOR{$p\ge 1$}
\STATE compute the set $H_p$ of distinct RHNFs of the submatrices of $P$ with $l$ columns and no more than $p$ rows 
\STATE compute the set $H_{p+1}$ of distinct RHNFs of the submatrices of $P$ with $l$ columns and no more than $p+1$ rows 
\IF{$H_p=H_{p+1}$ up to adding and removing rows of zeros}
\RETURN $p$
\ELSE
\STATE $p=p+1$
\ENDIF
\ENDFOR
\end{algorithmic}
\caption{A brute-force algorithm for determining the minimum integer $r$ such that every row lattice generated by a submatrix of $P$ with $l$ columns can be generated by a submatrix of $P$ with $l$ columns and no more than $r$ rows.\label{alg:findp}}
\end{algorithm}



\section*{Acknowledgements}

The authors thank the anonymous referees for many helpful comments and Fernando Q. Gouv\^ea for enlightening discussions.


%
%

\bibliographystyle{alphaurl}
\bibliography{CCCpaperCTv5}

\end{document}